\newtheorem{theorem}{Theorem}[section]
\newtheorem{proposition}[theorem]{Proposition}
\newtheorem{corollary}[theorem]{Corollary}
\newtheorem{lemma}[theorem]{Lemma}
\theoremstyle{definition}
\newtheorem{definition}[theorem]{Definition}
\newtheorem{remark}[theorem]{Remark}
\newtheorem{example}[theorem]{Example}
\newtheorem{question}[theorem]{Question}
\newcommand{\la}{\langle}
\newcommand{\ra}{\rangle}
\newcommand{\ZH}{\mathbb{Z}[\frac{1}{2}]}
\newcommand{\HZH}{\mathbb{H}_3(\ZH)}
\newcommand{\dZH}{\widehat{\ZH}}
\newcommand{\Z}{\mathbb{Z}}
\newcommand{\C}{\mathbb{C}}
\newcommand{\N}{\mathbb{N}}
\newcommand{\R}{\mathbb{R}}
\newcommand{\Q}{\mathbb{Q}}
\newcommand{\T}{\mathbb{T}}
\numberwithin{equation}{section}
\title{On amenable Hilbert-Schmidt stable groups}
\author{Caleb Eckhardt, Tatiana Shulman}
\address{Department of Mathematics, Miami University, Oxford, Ohio}
\email{eckharc@miamioh.edu}
\address{University of Gothenburg, Sweden.}
\email{tatshu@chalmers.se}
\thanks{T.S.\ is partially supported by a grant from the Swedish Research Council and by the Polish
National Science Centre grant under the contract number 2019/34/E/ST1/00178}
\begin{document}
\maketitle
\begin{abstract} We examine Hilbert-Schmidt stability (HS-stability) of discrete amenable groups from several angles.
We give a short, elementary proof that finitely generated nilpotent groups are HS-stable. We investigate the permanence of HS-stability under central quotients by showing HS-stability is preserved by finite central quotients, but is not preserved in general. We give a characterization of HS-stability for semidirect products $G\rtimes_\gamma \Z$  with $G$ abelian. We use it to construct the first example of a finitely generated amenable HS-stable group which is not permutation stable. Finally, it is proved that for amenable groups flexible HS-stability is equivalent to HS-stability, and very flexible HS stability is equivalent to maximal almost periodicity.  There is some overlap of our work with the very recent and very nice preprint \cite{Levit22} of Levit and Vigdorovich. We detail this overlap in the introduction.  Where our work overlaps it appears that we take different approaches to the proofs and we feel the two works compliment each other.

\end{abstract}
\section{Introduction}
A famous result of Voiculescu states that a pair of almost commuting unitary matrices need not be close in operator norm to a pair of actually commuting unitary matrices \cite{Voiculescu}. This can be rephrased by saying that an approximate representation of the group $\mathbb Z^2$  is not necessarily close to an actual representation.  In this sense $\mathbb Z^2$ is {\it not stable}. Similarly one defines stability of an arbitrary discrete group with respect to a given norm by declaring

\medskip

{\it  A group $G$ is stable with respect to a given norm if each of its approximate representations is close to a representation.}\footnote{See Section \ref{sec:Preliminaries} for details}

\medskip

Thus $\mathbb Z^2$ is not stable with respect to the operator norm and in fact operator norm stability is rare (\cite{ESS}, \cite{Dadarlat}) although there are some non-trivial examples (\cite{ESS}).

On the other hand, the normalized Hilbert-Schmidt norm is much more forgiving, and there are many examples of Hilbert-Schmidt stable groups. In the class of amenable groups there were not many examples (\cite{Hadwin18}, \cite{Glebsky}, \cite{HadwinLi}) until the recent preprint of Levit and Vigdorovich \cite{Levit22}, showed that all  finitely generated virtually nilpotent groups and many metabelian groups are HS-stable.

Non-amenable examples of HS-stable groups include certain graph product groups \cite{Atkinson},  one-relator
groups with non-trivial center \cite{Hadwin18}, and virtually free groups \cite{Gerasimova}.  We also note that HS-stability passes to free products and direct products with  abelian \cite[Th.1]{Hadwin18} or, more generally, amenable HS-stable groups \cite[Cor. D]{Ioana21a}.  Locating examples/non-examples of HS-stable groups  and exploring behavior under group theoretic constructions is a topic of intensive study, partly motivated by connection with the search of a non-hyperlinear group and quantum information aspects of Connes embedding problem (see e.g. \cite{Chiffre}, \cite{Thom}, \cite{CEP}, \cite{Salle}). In particular, to find a non-hyperlinear group it is sufficient to find an HS-stable non maximally periodic group.

Switching to non-examples of HS-stable groups, we recall some of the known obstructions to HS-stability.  In the non-amenable case, besides the ``ad-hoc" $F_2\times F_2$ (\cite{Ioana21}), all other non-examples use property T or property $\tau$  groups \cite{Becker20}, \cite{Ioana20}.   For amenable groups, the situation is quite different. First off, HS-stability of amenable groups admits the following useful characterization
\medskip

{\it An amenable group is HS-stable if and only if each of its traces is a pointwise limit of normalized traces of
finite dimensional representations \cite{Hadwin18}.}

\medskip
By considering the trace $\delta_e(g) = \begin{cases} 1, g=e \\ 0, g\neq e \end{cases}$ this characterization   more-or-less immediately yields the following obstruction to HS-stability:

\medskip

{\it Every amenable HS-stable group is maximally almost periodic.}\footnote{Alternatively, and even more naturally, one may use  the well-known hyperlinearity of amenable groups and the definition of HS-stability in terms of liftings, see section \ref{sec:Preliminaries}.}

\medskip

\noindent
With the above viewpoint for amenable groups,  one may directly view HS-stability as a very strong form of maximal almost periodicity.
Indeed, an amenable group is MAP if and only if the trace $\delta_e$  is a pointwise  limit of normalized traces of
finite dimensional representations  (\cite[Prop.3]{Hadwin18}).  On the other hand HS-stability requires \emph{every} trace to be approximated through matrices--not just $\delta_e.$  However it is unknown whether this difference is merely a formality--in other words if the ``obvious" obstruction to HS-stability is the \emph{only} obstruction. Explicitly, we ask
\begin{question}[MAP implies HS-stable?]\label{ques:MAP/HSstable} Are there any MAP amenable groups that are not HS-stable?
\end{question}

In one of the simplest classes of groups we obtain a dynamical reformulation of Question \ref{ques:MAP/HSstable} that is also unknown and worth highlighting.  Let $G$ be an abelian group and $\gamma$ an automorphism of $G.$  By Proposition \ref{prop:Mapchar},  $G\rtimes_\gamma\Z$ is MAP precisely when the periodic points of the dual action  $\hat\gamma$ on $\hat G$ are dense. On the other hand by Corollary \ref{cor:densinvmeasure}, $G\rtimes_\gamma\Z$ is HS-stable when the $\hat \gamma$-invariant probability measures on $\hat G$ with finite support are weak*-dense in all $\hat \gamma$-invariant probability measures on $\hat G.$ It is trivial in this case that HS-stable implies MAP and we may recast Question \ref{ques:MAP/HSstable} as

\begin{question}[MAP implies HS-stable? Simplified version] Is there any abelian group $G$ and an automorphism $\gamma$ of $G$ such that the $\hat\gamma$ periodic points are dense but the $\hat \gamma$-invariant probability measures on $\hat G$ with finite support are \emph{not} weak*-dense in the set of all  $\hat \gamma$-invariant probability measures?
\end{question}

We now turn to a detailed description of the contents of this paper.  In Section \ref{sec:Preliminaries} we begin with the necessary group theoretic and operator algebraic preliminaries used throughout the paper.
In Section \ref{sec:nilpotentcase} we expand the known class of HS-stable groups with

{\bf Theorem \ref{thm:nilapprox}.}  Every finitely generated nilpotent group is HS-stable.

\bigskip
 Theorem \ref{thm:nilapprox} was recently and independently obtained by Levit and Vigdorovich \cite{Levit22}.  In fact they show HS-stability of all finitely generated \emph{virtually} nilpotent groups.  Our result is less general than their's but our proof is brief and elementary so we include it.

 In Section \ref{sec:MAP} we establish several results about MAP groups and establish two useful obstructions to maximal almost periodicity.   In particular the obstructions from Section \ref{sec:MAP} lead to many easy examples of nilpotent non-MAP groups showing that the finitely generated assumption of Theorem \ref{thm:nilapprox} is essential. It is possible some of the results of Section \ref{sec:MAP} are known but we were not able to find them in the literature.

Next we investigate central extensions and the examples they lead to in Section \ref{sec:examples}. We start with a simple observation.

\bigskip

{\bf Proposition \ref{prop:stableext}.} Suppose we have an extension $1 \to N \to G \to H \to 1$, where $N$ is finite and central and $G$ is an HS-stable group. Then $H$ is HS-stable.

\bigskip

The assumptions of Proposition \ref{prop:stableext} cannot be pushed any further. Namely

\bigskip

{\bf Corollary \ref{2-stepNilpotent}.} For the central extension
\begin{equation*}
0\rightarrow \Z\rightarrow \mathbb{H}_3(\mathbb{Z}[\tfrac{1}{2}])\rightarrow \mathbb{H}_3(\mathbb{Z}[\tfrac{1}{2}])/\Z\rightarrow 0
\end{equation*}
the group $\HZH$ is HS-stable, but the quotient $\HZH/\Z$ is not MAP and therefore not HS-stable.  Hence Proposition \ref{prop:stableext} can not be extended beyond the finite case.

The impetus for studying central extensions was to use the ideas of \cite{Becker19} to try and locate an amenable, HS-stable group that is not permutation stable. Along the way to achieving this we obtained the following

\bigskip

{\bf Corollary \ref{cor:densinvmeasure}.} Let $G$ be a countable abelian group and $\gamma$ an automorphism.  Then $G\rtimes_\gamma\Z$ is HS-stable if and only if the $\hat\gamma$-invariant measures on $\hat G$ with finite support are weak*-dense in the set of all $\hat\gamma$-invariant measures on $\hat G.$

\bigskip
Levit and Vigdorovich recently and independently obtained a very similar result \cite[Theorem C]{Levit22}.  We feel the proofs are quite different\footnote{In particular our proof heavily uses operator algebras} and each approach adds new insight into HS-stability.
We use the ideas leading up to Corollary \ref{cor:densinvmeasure}  to give the first example of a finitely generated amenable HS-stable non-permutation stable group. Permutation stability requires approximate homomorphisms to permutation groups to be close to homomorphisms, with respect to the normalized Hamming distance.  It plays the same role in the context of sofic conjecture as HS-stability -- in the context of Connes embedding problem for groups. Namely to disprove sofic conjecture, it is sufficient to find a permutation stable non - residually finite  group. Ioana \cite{Ioana20a}  showed that $F_2\times \mathbb Z$ is not permutation stable while it is HS-stable by \cite{Hadwin18}, hence permutation and HS-stability were known to be different in general. However it was not known whether these two versions of stability are equivalent in the class of \emph{amenable} groups. We settle this problem with
\bigskip

{\bf Theorem \ref{HS-stableNotPermutationStable}.} Define $\beta:\HZH\to\HZH$ by
\begin{equation*}
\beta\left( \left[ \begin{array}{ccc} 1 & x & z\\ 0 & 1 & y\\ 0 & 0 & 1 \end{array}  \right]  \right)=\left[ \begin{array}{ccc} 1 & 2x & z\\ 0 & 1 & \frac{y}{2}\\ 0 & 0 & 1 \end{array}  \right]
\end{equation*}
Then the group $\HZH\rtimes_\beta\Z$ is (finitely generated) HS-stable. Moreover by the proof of \cite[Corollary 8.7]{Becker19},  $\HZH\rtimes_\beta\Z$ is not permutation stable.

\bigskip

We finish with Section \ref{sec:flexiblestability} characterizing  flexible HS-stability and very flexible HS-stability of amenable groups.    The notions of (very) flexible stability were introduced by Becker and Lyubotzky \cite{Becker20} and they require any approximate representation to only be close to a
 ``(large)
corner" of a representation (see section \ref{sec:flexiblestability} for details). Again, if  a Connes-embeddable countable group  is (very)  flexibly HS-stable, then it must be MAP.  Examples of non-flexibly stable groups can be found in \cite{Ioana20}, \cite{Ioana21}.
We continue our theme of highlighting the (possibly non-existent) line between HS-stable and MAP amenable groups with the following
\bigskip

{\bf Proposition \ref{FlexiblyStable}.} Let $G$ be amenable. Then $G$ is flexibly HS-stable iff it is HS-stable.

\bigskip

{\bf Corollary \ref{VeryFlexiblyStable}.} Let $G$ be amenable. Then $G$ is very flexibly HS-stable iff $G$ is MAP.

\bigskip
In light of these results, Question \ref{ques:MAP/HSstable} is equivalent to asking if there are any very flexible HS-stable amenable groups that are not flexibly HS-stable.

\section{Preliminaries}\label{sec:Preliminaries}
\subsection{Group preliminaries}
We record some group theory definitions and results that will be used frequently in this work.
Let $G$ be a group. We denote by $e\in G$ the trivial element.  We write $N\lhd_f G$ to mean that $N$ is a normal subgroup of finite index. The group $G$ is \textbf{residually finite} if
\begin{equation*}
\bigcap_{N\lhd_f G} N=\{ e \}.
\end{equation*}
Equivalently $G$ is residually finite if for every $x\in G$ there is a finite group $F$ and a homomorphism $\pi:G\to F$ such that $\pi(x)\neq e$.

Let $\mathcal{U}(n)$ be the unitary group on $n\times n$ matrices.  A group $G$ is called \textbf{maximally almost periodic} (abbreviated MAP) if for every $x\in G$ there is an $n\in\N$ and a homomorphism $\pi:G\to \mathcal{U}(n)$ such that $\pi(x)\neq e.$
By the Peter-Weyl theorem a group is MAP if and only if it embeds into a compact group.

Clearly every residually finite group is MAP but in general the converse is not true, for example $\Q$ is MAP but not residually finite. On the other hand, Mal'cev showed \cite{Malcev40} that every \emph{finitely generated} linear group is residually finite, in particular for finitely generated groups the notions of MAP and residually finite coincide.

A \textbf{trace} on a group $G$ is a positive definite function $\tau:G\rightarrow\C$ that is constant on conjugacy classes and satisfies $\tau(e)=1.$ A \textbf{character} of $G$ is an extreme point in the set of all traces on $G$.

Throughout this paper by a {\bf representation} we mean a unitary representation of a group, that is a homomorphism either to $\mathcal U(n)$, for some $n\in \mathbb N$, or to the group of unitary operators on a Hilbert space. For an integer $n\geq0$ we write $M_n$ for the $n\times n$ matrices with complex coefficients and write $\text{tr}_n:M_n\rightarrow \C$ for the trace that maps the identity matrix to 1. For an element $T\in M_n$ define the \textbf{normalized Hilbert-Schmidt norm} as $\| T \|_2=(\text{tr}_n(T^*T))^{1/2}.$

 \medskip

An {\bf approximate representation} of a group $G$ is a sequence of unital maps $\phi_n: G \to \mathcal U(k_n)$, $n\in \mathbb N$, such that for all $g_1,g_2\in G$ we have
\begin{equation*}
\lim_{n\rightarrow\infty}\|\phi_n(g_1g_2) - \phi_n(g_1)\phi_n(g_2)\|_2 =0.
\end{equation*}
A group $G$ is {\bf Hilbert-Schmidt stable} (HS-stable) if for any approximate representation $\phi_n: G \to \mathcal U(k_n)$, $n\in \mathbb N$, there is a sequence of representations $\pi_n: G \to \mathcal U(k_n)$, $n\in \mathbb N$, such that for all $g\in G$
\begin{equation*}
\lim_{n\rightarrow\infty}\|\phi_n(g) - \pi_n(g)\|_2 =0.
\end{equation*}
\medskip

Equivalently, $G$ is HS-stable if for any non-trivial ultrafilter $\omega$ on $\mathbb N$ and any  sequence $k_n$, any homomorphism from $G$ to the ultraproduct  group
\begin{equation*}
\prod^{\omega} (\mathcal U(k_n), \|\|_2) = \prod \mathcal U(k_n) / \{(V_n)_{n\in \mathbb N} \;|\; \|V_n - \mathbb 1\|_2 \to_{\omega} 0\} \end{equation*}
lifts to a homomorphism from $G$ to the product $\prod \mathcal U(k_n)$ (\cite{Hadwin18}).

A group is {\bf hyperlinear} if it embeds to the ultraproduct group $\prod^{\omega} (\mathcal U(k_n), \|\|_2)$, for some choice of $\omega$ and a sequence $k_n$.  The Connes Embedding Conjecture for groups states that any  discrete group is hyperlinear.  Although the Connes Embedding Problem for $II_1$-factors was recently disproved in \cite{CEP}, the Connes Embedding Conjecture for groups is open. Summarizing the above paragraphs every hyperlinear, HS-stable group is MAP. 
It is well known and easy to prove that amenable groups are hyperlinear.  Consequently an amenable HS-stable group is MAP.

Finally we use the following very useful reformulation of HS-stability for amenable groups throughout the paper

\begin{theorem}[\cite{Hadwin18} Theorem 4, Lemma 1] \label{thm:HSchar}
Let $G$ be an amenable group.  Then the following are equivalent
\begin{enumerate}
\item  $G$ is HS-stable 
\item For every trace $\tau$ on $G$ there is a sequence of finite $k_n$-dimensional representations $\pi_n$ of $G$ such that $\tau(g) = \lim \text{tr}_{k_n}\circ\pi_n(g)$ for all $g\in G$
\item For every character $\tau$ on $G$ there is a sequence of finite $k_n$-dimensional representations $\pi_n$ of $G$ such that $\tau(g) = \lim \text{tr}_{k_n}\circ\pi_n(g)$ for all $g\in G$
\end{enumerate}
\end{theorem}

\subsection{Operator algebraic preliminaries}\label{sec:opalgpre}
We recall some frequently used facts about operator algebras and fix our notation at the same time.  We refer the reader to Blackadar's text \cite{Blackadar06} for general information on operator algebras including almost everything we mention here.

Let $A$ be a unital C*-algebra. We denote its unitary group by $\mathcal U(A)$ and the unit by $1_A.$
A \textbf{trace} $\tau$ on $A$ is a positive linear functional $\tau:A\rightarrow \C$ such that $\tau(xy)=\tau(yx)$ for all $x,y\in A$ and $\tau(1_A)=1.$  We write $\pi_\tau$ for the GNS representation associated with $\tau$ and $\pi_\tau(A)''$ is the von Neumann algebra generated by $\pi_\tau(A).$  The set of all traces on $A$ form a convex set.  A trace $\tau$ is extreme if and only if the von Neumann algebra $\pi_\tau(A)''$ is a \textbf{factor}, i.e. if $\pi_\tau(A)''$ has trivial center \cite[Corollary 6.8.6]{Dixmier77}

Let $\gamma$ be an automorphism of $A.$  We write $A\rtimes_\gamma\Z$ for the \textbf{crossed product} \cite[II.10]{Blackadar06}.
Let $u\in A\rtimes_\gamma\Z$ be the \textbf{implementing unitary}, i.e. $uau^*=\gamma(a)$ for all $a\in A.$

If $\tau$ is a $\gamma$-invariant trace on $A$, then it extends to $A\rtimes_\gamma\Z$ by setting $\tau(xu^n)=0$ when $x\in A$ and $n\neq0.$ We call this the \textbf{trivial extension.} For a discrete group $G$ we write $C^*(G)$ for the full group C*-algebra.  All of the groups considered in this paper are amenable so we make no distinction between the full and reduced group C*-algebras and crossed products. We recall \cite[II.10]{Blackadar06} that when $G$ is abelian the Fourier transform gives an isomorphism $C^*(G)\cong C(\hat G)$ and for semidirect products $G\rtimes_\gamma \Z$ we have $C^*(G\rtimes_\gamma \Z)\cong C^*(G)\rtimes_\gamma\Z$ where $\gamma$ is the induced automorphism of $C^*(G).$

The group $G$ sits naturally inside of $C^*(G)$ \cite[II.10]{Blackadar06}.  This provides an affine homeomorphism between traces on a group $G$  and traces on the C*-algebra $C^*(G)$.  In general we make no distinction and use the same symbol $\tau$ when talking about the trace on $G$ or on $C^*(G).$

A  C*-algebra $A$ is $k$-\textbf{homogeneous} if every irreducible representation of $A$ is $k$-dimensional (see \cite[IV.1.4]{Blackadar06} and also Fell's original paper \cite{Fell61}). The primitive ideal space, or spectrum of $A$, is denoted by $\hat A.$  For each ideal $I\in \hat A$ we have $A/I\cong M_k.$  The spectrum $\hat A$ of a $k$-homogeneous C*-algebra is a compact Hausdorff space.  For each trace $\tau$ on $A$ we obtain a Borel probability measure $\nu_\tau$ on $\hat A$ such that
\begin{equation}\label{eq:traceonhomo}
\tau(x)=\int_{\hat A}\text{tr}_k(x+I)d\nu_\tau(I)
\end{equation}
Let $A$ be a $k$-homogeneous C*-algebra and $\gamma$ an automorphism of $A.$  The map $I\mapsto \gamma(I)$ is a homeomorphism of $\hat A$ that we write as $\hat\gamma.$  For any $\gamma$-invariant trace $\tau$ on $A$ we have that $\hat \gamma$ is a measure preserving action on $(\hat A,\nu_\tau).$

Moreover suppose that there is a $\hat \gamma$ periodic point $I$ with period dividing $n.$   Let $\pi_I:A\to A/I\cong M_k$ be the quotient homomorphism. Since $\gamma^n(I)=I$, it follows that $\gamma^n$ descends to an automorphism of $A/I\cong M_k.$   Since all automorphisms of $M_k$ are inner there is a unitary $V\in M_k$ such that
\begin{equation}\label{eq:inneraut}
\pi_I(\gamma^n(x))=V\pi_I(x)V^* \text{ for all }x\in A.
\end{equation}
Let $u\in A\rtimes_\gamma\Z$ be the generating unitary. We define $\pi_{I, n}:A\rtimes_\gamma\Z\to M_{kn}$ by
\begin{equation}\label{eq:halfinduced}
\pi_{I,n}(x) = \bigoplus_{i=0}^{n-1} \pi_I\circ\gamma^i(x), \text{ for  }x\in A\text{ and }\pi_{I,n}(u) =
\left[  \begin{array}{ccccc} 0 & 1 & 0 & \cdots& 0 \\
                                          0 & 0 & 1 & \cdots & 0\\
                                          \vdots & \vdots & \vdots & \ddots&\vdots\\
                                          0 & 0 & 0 & \cdots & 1\\
                                          V & 0 & 0 & \cdots & 0  \end{array}        \right]
\end{equation}
By (\ref{eq:inneraut}) it is clear that $\pi_{I, n}$ determines a representation of $A\rtimes_\gamma\Z.$
Define $\nu_{I,n} = n^{-1}\sum_{i=0}^{n-1}\delta_{\gamma^{i}(I)}.$ Then $\nu_{I,n}$ is clearly a $\hat\gamma$-invariant probability measure on $\hat A.$ Let $\tau$ be the trivial extension to $A\rtimes_\gamma\Z.$  We then have
\begin{equation}\label{eq:tpres}
\tau(xu^i)=\text{tr}_{kn}\circ\pi_{I,n}(x)\text{ for all }x\in A\text{ and for all }-n<i<n.
\end{equation}

\section{HS-stability of finitely generated nilpotent groups}\label{sec:nilpotentcase}
In this section we show that all finitely generated nilpotent groups are HS-stable.  In \cite{Levit22} it was independently shown that all finitely generated  \emph{virtually} nilpotent groups are HS-stable.
Our result is less general than theirs but the proof is straightforward and elementary, so we include it. We first recall some well-known facts about nilpotent groups.
\begin{lemma}\label{lem:approxhomo} Let $H$ be a finitely generated group and $\pi:H\rightarrow \T$ a homomorphism.  Then there is a sequence of homomorphisms $\pi_n:H\rightarrow \T$ such that $\textup{ker}(\pi_n)$ is finite index in $H$ and $\pi_n(h)\rightarrow \pi(h)$ for all $h\in H.$
\end{lemma}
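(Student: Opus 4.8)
The plan is to reduce the statement to a simultaneous rational approximation problem on the finitely generated abelian group $H^{\mathrm{ab}}=H/[H,H]$. Since the target $\T$ is abelian, the homomorphism $\pi$ factors through the abelianization, so it suffices to approximate a homomorphism $\bar\pi\colon H^{\mathrm{ab}}\to\T$ by homomorphisms with finite image; pulling these back along the quotient map $H\to H^{\mathrm{ab}}$ produces the desired $\pi_n$, and a finite-image homomorphism is precisely one whose kernel has finite index. Because $H$ is finitely generated, so is $H^{\mathrm{ab}}$, and the structure theorem for finitely generated abelian groups gives $H^{\mathrm{ab}}\cong\Z^r\oplus F$ with $F$ a finite abelian group.

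Next I would describe $\bar\pi$ on a basis. On the torsion summand $F$ the restriction of $\bar\pi$ already has finite image, so nothing needs to be approximated there. On the free summand $\Z^r$ the map is determined by its values $\bar\pi(e_i)=e^{2\pi i\theta_i}$ on the standard basis, with $\theta_i\in\R/\Z$. Choosing rationals $\theta_i^{(n)}\to\theta_i$ (possible by density of $\Q$ in $\R$), I define $\pi_n$ by setting $\pi_n(e_i)=e^{2\pi i\theta_i^{(n)}}$ and letting $\pi_n$ agree with $\bar\pi$ on $F$. Since $\Z^r$ is free abelian, this prescription on a basis extends uniquely to a homomorphism $\Z^r\oplus F\to\T$. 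This is the crucial point: the freeness of the $\Z^r$ summand is exactly what permits perturbing the generator values without violating any relations.

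It then remains to verify the two required properties. Each generator of the image of $\pi_n$ is a root of unity, and a finitely generated torsion subgroup of the abelian group $\T$ is finite; hence $\pi_n$ has finite image, so $\ker(\pi_n)$ has finite index in $H^{\mathrm{ab}}$, and its pullback has finite index in $H$. For pointwise convergence, by construction $\pi_n\to\bar\pi$ on the finite generating set $\{e_1,\dots,e_r\}\cup F$; since every element of $H^{\mathrm{ab}}$ is a word in these generators and multiplication in $\T$ is continuous, $\pi_n(x)\to\bar\pi(x)$ for every $x$, and passing back through the quotient yields $\pi_n(h)\to\pi(h)$ for all $h\in H$.

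I do not anticipate a serious obstacle, as the content is elementary simultaneous approximation of angles by rationals. The only point requiring genuine care—and the reason the argument is routed through the abelianization and a free basis—is ensuring the approximants are bona fide homomorphisms: perturbing the values of an arbitrary generating set of $H$ would in general be incompatible with the defining relations, whereas on the free part of $H^{\mathrm{ab}}$ any assignment of basis values extends.
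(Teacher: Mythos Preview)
Your proposal is correct and follows essentially the same approach as the paper: reduce to a finitely generated abelian group (you via the abelianization, the paper via $H/\ker(\pi)$), apply the structure theorem, approximate the values on a basis of the free part by roots of unity, and leave the torsion part unchanged. The paper's proof is terser but the content is identical.
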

\begin{proof} By replacing $H$ with $H/\textup{ker}(\pi)$ we may assume that $H$ is a finitely generated abelian group. Then $H\cong \Z^d\times T$ where $T$ is the torsion subgroup of $H.$  One builds $\pi_n$ by appropriately approximating $\pi(e_i)$ for a basis $\{ e_1,...,e_d \}$ on the free part of $H$ and setting $\pi_n=\pi$ on $T.$
\end{proof}
\begin{definition} Let $G$ be a group and $H\leq G.$  Let $K_1,K_2,...$ be a sequence of finite index, normal subgroups of $G.$  We say that the sequence $(K_n)$ \textbf{separates} $H$ if for every $x\in G\setminus H$ there is an index $n_0$ so $n\geq n_0$ implies that $xK_n \not\in HK_n.$ If such a sequence exists we say that $H$ is \textbf{separated} in $G.$
\end{definition}
\begin{remark} \label{rem:moresep} Suppose that $(K_n)$ separates a subgroup $H\leq G$ and that $(L_n)$ is any sequence of finite index, normal subgroups of $G.$  Then $(K_n\cap L_n)$ also separates $H.$
 \end{remark}
The following is straightforward and well-known.  One may take an appropriate subsequence of the groups $G^n=\la  g^n:g\in G \ra$ as a separating sequence.
\begin{lemma} \label{thm:nilsep} Every subgroup of a finitely generated nilpotent group is separated.
 \end{lemma}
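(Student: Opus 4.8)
The plan is to take as separating sequence the verbal subgroups $K_n := G^{n!}$, where $G^{m}=\langle g^{m}:g\in G\rangle$, and to reduce the separating property to the statement that every subgroup $H$ is closed in the profinite topology of $G$. First I would record the elementary properties of the $G^{m}$. Each $G^{m}$ is characteristic, hence normal, and has finite index: in the quotient $G/G^{m}$ every element satisfies $\bar g^{m}=e$, so $G/G^{m}$ is a finitely generated nilpotent group of finite exponent, and such a group is finite (a finitely generated nilpotent torsion group is finite). Moreover $G^{m}\leq G^{m'}$ whenever $m'\mid m$, since $g^{m}=(g^{m'})^{m/m'}\in G^{m'}$; thus the sequence $(K_n)=(G^{n!})$ is \emph{decreasing}. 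It is also cofinal among all finite-index normal subgroups: if $N\lhd_f G$ with $[G:N]=m$, then Lagrange in $G/N$ gives $g^{m}\in N$ for all $g$, whence $G^{m}\leq N$, and therefore $K_n=G^{n!}\leq N$ for all $n\geq m$.

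With these in hand, the separating property for the decreasing family $(K_n)$ amounts to: for each $x\in G\setminus H$ there is some $n$ with $x\notin HK_n$ (monotonicity then yields $x\notin HK_n$ for all larger $n$, i.e.\ $xK_n\notin HK_n$). By cofinality of $(K_n)$, the existence of such an $n$ is equivalent to the existence of some $N\lhd_f G$ with $x\notin HN$. Hence the lemma is equivalent to the assertion that $H=\bigcap_{N\lhd_f G}HN$ for every subgroup $H$, that is, that every subgroup of $G$ is closed in the profinite topology. Since finitely generated nilpotent groups are polycyclic, every subgroup is itself finitely generated, so this is precisely subgroup separability.

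I would prove closedness by induction on the nilpotency class $c$ of $G$. For $c=1$ the group is finitely generated abelian; passing to $\bar G=G/H$ reduces the claim to $\bigcap_m \bar G^{m}=\{e\}$, which is immediate from the structure $\bar G\cong \Z^{d}\times F$ with $F$ finite. For the inductive step put $A:=\gamma_c(G)$, the last nontrivial term of the lower central series; it is central and finitely generated abelian, and $Q:=G/A$ has class $<c$. Given $x\notin H$, if the image of $x$ in $Q$ avoids the image $HA/A$ of $H$, then the inductive hypothesis applied to $Q$ produces $\bar N\lhd_f Q$ separating them, and its preimage $N\lhd_f G$ (which contains $A$) satisfies $x\notin HN$. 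The remaining case is $x\in HA\setminus H$; multiplying on the left by a suitable element of $H$ I may assume $x=a\in A\setminus H$, and I must separate the central element $a$ from $H$ in a finite quotient.

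This last case is the heart of the matter and the step I expect to be the main obstacle. The difficulty is that $a$ lies in $A=\ker(G\to Q)$, so projecting to $Q$ destroys it; residual finiteness of $Q$ alone does not suffice, since one must separate $a$ from $H$ in the ``central direction'' while keeping the image of $H$ under control. This is exactly the content of the classical theorem of Mal'cev (cf.\ \cite{Malcev40}) that finitely generated nilpotent—indeed polycyclic—groups are subgroup separable, whose proof at this step rests on the fact that the integral group ring of the polycyclic group $Q$ is Noetherian and that its finitely generated modules are residually finite. Since the paper already invokes Mal'cev's residual finiteness of finitely generated nilpotent groups, I would simply cite this companion separability result to close the argument; alternatively one completes the induction directly by viewing $A/(A\cap H)$ as a finitely generated $\Z[Q]$-module and separating the image of $a$ in a finite quotient module. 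Granting this, the reductions above show that $(G^{n!})$ separates every subgroup of $G$, which is the claim.
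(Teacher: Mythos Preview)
Your proposal is correct and follows essentially the same route as the paper, which records only the one-line hint that an appropriate subsequence of the verbal subgroups $G^{n}=\langle g^{n}:g\in G\rangle$ serves as a separating sequence. You have simply filled in the details the paper omits---cofinality of $(G^{n!})$ among finite-index normal subgroups and the reduction to subgroup separability of finitely generated nilpotent groups (Mal'cev)---which the paper treats as well-known.
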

\begin{lemma}\label{lem:extendgroups} Let $G$ be a finitely generated nilpotent group and $H\leq Z(G)$ a finite index subgroup of the center.   Then there is a normal subgroup $N\trianglelefteq G$ of finite index such that $N\cap Z(G)=H.$
\end{lemma}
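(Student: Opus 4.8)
The plan is to quotient out by $H$ and then invoke residual finiteness. Since $H\leq Z(G)$, the subgroup $H$ is central, hence normal in $G$, so the quotient $\bar G:=G/H$ is a well-defined finitely generated nilpotent group, and therefore residually finite (finitely generated nilpotent groups are residually finite, e.g.\ via Mal'cev). Write $q:G\to\bar G$ for the quotient map. Because $H\leq Z(G)$ has finite index, the image $\bar Z:=q(Z(G))=Z(G)/H$ is a \emph{finite} subgroup of $\bar G$ which is central, as images of central elements are central. The problem thus reduces to producing a finite index normal subgroup $\bar N\trianglelefteq\bar G$ meeting $\bar Z$ trivially; the pullback $N:=q^{-1}(\bar N)$ will then be the desired subgroup of $G$.

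I would construct $\bar N$ by separating the finitely many nontrivial elements of $\bar Z$ one at a time. For each $\bar z\in\bar Z\setminus\{\bar e\}$, residual finiteness of $\bar G$ supplies a finite index normal subgroup $\bar N_{\bar z}\trianglelefteq\bar G$ with $\bar z\notin\bar N_{\bar z}$. Since $\bar Z$ is finite, the intersection $\bar N:=\bigcap_{\bar e\neq\bar z\in\bar Z}\bar N_{\bar z}$ is a finite intersection, hence still finite index and normal, and by construction contains no nontrivial element of $\bar Z$, i.e.\ $\bar N\cap\bar Z=\{\bar e\}$.

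It remains to verify that $N:=q^{-1}(\bar N)$ works. Normality and finite index of $N$ follow from those of $\bar N$ since $q$ is surjective, and $H=q^{-1}(\bar e)\subseteq N$. For the intersection, applying $q$ gives $q(N\cap Z(G))\subseteq q(N)\cap q(Z(G))=\bar N\cap\bar Z=\{\bar e\}$, so $N\cap Z(G)\subseteq q^{-1}(\bar e)=H$; the reverse inclusion is immediate from $H\subseteq N$ and $H\subseteq Z(G)$. Hence $N\cap Z(G)=H$.

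Since every step is bookkeeping, there is no real obstacle: the entire content is the observation that $Z(G)/H$ is finite (because $Z(G)$ is finitely generated abelian and $H$ has finite index) together with residual finiteness of $G/H$. An alternative route, avoiding the passage to the quotient, applies the separation property of $H$ from Lemma \ref{thm:nilsep} directly: choosing a separating sequence $(K_n)$ and taking $N=HK_m$ for $m$ large enough to separate the finitely many nontrivial $H$-cosets inside $Z(G)$ yields the same conclusion. I would present the quotient argument, as it is the most transparent.
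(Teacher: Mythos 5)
Your proof is correct and is essentially the same as the paper's: both pass to the residually finite quotient $G/H$, use that $Z(G)/H$ is finite to separate its finitely many nontrivial elements by a finite-index normal subgroup (the paper packages the separating subgroups into the kernel of a single homomorphism to a finite group, you intersect them explicitly), and pull back. No issues.
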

\begin{proof} The group $H$ is clearly normal in $G.$ The group $G/H$ is nilpotent and finitely generated hence it is residually finite.  Let $x_0=e,x_1,\ldots,x_n\in Z(G)$ be a complete set of coset representatives for $Z(G)/H.$  Let $K$ be a finite group and $\pi: G/H\rightarrow K$ a homomorphism such that $\pi(x_iH)\neq e$ for $i=1,\ldots,n$.  Let $\sigma:G\rightarrow G/H$ be the quotient map, and set $N=\textup{ker} (\pi\circ\sigma).$ Then $N$ is clearly finite index in $G$  and $H\leq N.$  Finally, let $i\in \{ 1,\ldots,n \}$ and $h\in H.$ Then $\pi(\sigma(x_ih))=\pi(x_iH)\neq e,$ i.e. $x_ih\not\in N.$  Hence $H=N\cap Z(G).$
\end{proof}

\begin{lemma}\label{lem:trivint} Let $G$ be nilpotent and $(L_n)$ a sequence of normal subgroups such that
$\bigcap_{n=1}^\infty (L_n\cap Z(G)) =\{e\}.$ Then $\bigcap_{n=1}^\infty L_n = \{ e \}.$
\end{lemma}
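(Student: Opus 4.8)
The plan is to reduce the statement to the classical structural fact that in a nilpotent group every nontrivial normal subgroup meets the center nontrivially. First I would set $L=\bigcap_{n=1}^\infty L_n$ and note that, being an intersection of normal subgroups, $L$ is itself normal in $G$. Since intersecting a family of subgroups and then meeting the result with the fixed subgroup $Z(G)$ gives the same set as meeting each member with $Z(G)$ first, I obtain
\begin{equation*}
L\cap Z(G)=\bigcap_{n=1}^\infty\big(L_n\cap Z(G)\big)=\{e\},
\end{equation*}
the last equality being exactly the hypothesis. Thus the goal reduces to showing that a normal subgroup $L$ with $L\cap Z(G)=\{e\}$ must itself be trivial.

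The key step is to run this as a contrapositive through the upper central series. I would write $\{e\}=Z_0\leq Z_1=Z(G)\leq\cdots\leq Z_c=G$ for the upper central series of the nilpotent group $G$, assume toward a contradiction that $L\neq\{e\}$, and let $k$ be minimal with $L\cap Z_k\neq\{e\}$; such a $k$ exists because $L\cap Z_c=L\neq\{e\}$. Picking $x\neq e$ in $L\cap Z_k$, I would observe that for every $g\in G$ the commutator $[x,g]$ lies in $Z_{k-1}$ by the defining property $[Z_k,G]\leq Z_{k-1}$ of the series, while it also lies in $L$ because $L$ is normal and hence $g^{-1}xg\in L$. Therefore $[x,g]\in L\cap Z_{k-1}=\{e\}$ by minimality of $k$, so $x$ commutes with every $g$, i.e. $x\in Z(G)$. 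This forces $L\cap Z(G)\neq\{e\}$, contradicting the displayed identity, and so $L=\{e\}$.

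Honestly there is no serious obstacle here: beyond elementary manipulation of intersections, the only ingredient is the standard lemma that nontrivial normal subgroups of nilpotent groups intersect the center, whose short proof I sketched above via the upper central series. The one point worth emphasizing is that the hypothesis controls only the traces $L_n\cap Z(G)$ on the center and says nothing directly about the $L_n$, so the argument genuinely requires both the normality of $L$ (to keep the commutators $[x,g]$ inside $L$) and the nilpotency of $G$ (to push those commutators down the central series); dropping either hypothesis makes the conclusion fail.
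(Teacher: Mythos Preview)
Your proof is correct and follows essentially the same approach as the paper: reduce to the classical fact that every nontrivial normal subgroup of a nilpotent group intersects the center nontrivially. The paper simply cites this fact, whereas you additionally provide the standard upper-central-series argument for it.
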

\begin{proof} This is an immediate consequence of the fact that every non-trivial normal subgroup  of a nilpotent group must intersect the center non-trivially.
\end{proof}

For a group $G$ we let $G_f$ denote the (normal) subgroup consisting of all elements with finite conjugacy class.

\begin{lemma} \label{lem:containedinfc} Let $G$ be a finitely generated nilpotent group and $H\leq G_f.$  Then there is a finite set $F\subseteq G$ such that for any $a\in G$ we have
\begin{equation*}
aHa^{-1} \subseteq \bigcup_{b\in F} bHb^{-1}.
\end{equation*}
\end{lemma}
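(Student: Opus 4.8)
The plan is to prove the stronger assertion that $H$ has only finitely many distinct conjugates in $G$; once this is established we may take $F$ to be a set of coset representatives for a suitable finite-index subgroup, and the displayed inclusion will in fact hold as an equality. The key object is the centralizer $C_G(H)$, and the whole argument reduces to showing it has finite index.

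First I would note that $H$ is finitely generated. Since $G$ is finitely generated and nilpotent it is polycyclic and therefore satisfies the maximal condition on subgroups, so every subgroup of $G$ is finitely generated; in particular we may write $H=\langle h_1,\dots,h_m\rangle$. Next, because $H\le G_f$, each generator $h_i$ has a finite conjugacy class, which is exactly the statement that the centralizer $C_G(h_i)$ has finite index in $G$. Setting
\begin{equation*}
C \;=\; C_G(H)\;=\;\bigcap_{i=1}^{m}C_G(h_i),
\end{equation*}
a finite intersection of finite-index subgroups is again finite-index, so $[G:C]<\infty$.

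This finite-index step is the only point where the hypotheses genuinely enter, and it is the step I expect to need the most care: it relies crucially on $H$ being \emph{finitely generated}, since for an infinitely generated $H$ one would be intersecting infinitely many finite-index centralizers and could lose finiteness of the index. The nilpotency of $G$ is used precisely to guarantee (via the Noetherian/polycyclic property) that the subgroup $H$ is finitely generated.

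Finally I would exploit that each $c\in C$ commutes with every $h\in H$, so that $cHc^{-1}=H$. Consequently, if $a=bc$ with $c\in C$, then $aHa^{-1}=b\,(cHc^{-1})\,b^{-1}=bHb^{-1}$, so the conjugate $aHa^{-1}$ depends only on the coset $aC$. Choosing representatives $b_1,\dots,b_k$ for the finitely many left cosets of $C$ and setting $F=\{b_1,\dots,b_k\}$, every $a\in G$ lies in some $b_jC$, whence $aHa^{-1}=b_jHb_j^{-1}\subseteq\bigcup_{b\in F}bHb^{-1}$, which is the desired conclusion.
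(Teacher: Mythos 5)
Your proof is correct, and it takes a genuinely different route from the paper's. The paper does not pass through the centralizer of $H$ at all: it invokes Baer's theorem that $Z(G)$ has finite index in $G_f$ for a finitely generated (nilpotent) group, picks coset representatives $x_1,\dots,x_k$ of $H/(Z(G)\cap H)$, and controls an arbitrary conjugate $aha^{-1}$ by writing $h=x_iz$ with $z$ central and observing that $ax_ia^{-1}$ ranges over the finite conjugacy class of $x_i$; the union of the resulting finite sets $F_i$ serves as $F$. Your argument instead uses the Noetherian property of polycyclic groups to see that $H$ is finitely generated, intersects the finitely many finite-index centralizers $C_G(h_i)$ to conclude $[G:C_G(H)]<\infty$, and takes $F$ to be coset representatives of $C_G(H)$. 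Each step is sound: finitely generated nilpotent groups are indeed polycyclic and hence satisfy the maximal condition on subgroups, $H\le G_f$ does give each $C_G(h_i)$ finite index, and $aHa^{-1}$ visibly depends only on the coset $aC_G(H)$. Your version buys a strictly stronger conclusion --- $H$ has only finitely many conjugates as a set, and the displayed inclusion becomes membership in a finite list of subgroups --- whereas the paper's version controls conjugates element by element and never asserts that $aHa^{-1}$ coincides with any single $bHb^{-1}$. The paper's approach, on the other hand, avoids the polycyclic/Noetherian input entirely and would survive in settings where $H$ need not be finitely generated, provided $Z(G)\cap H$ still has finite index in $H$; since the lemma is only applied inside finitely generated nilpotent groups, either argument does the job.
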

\begin{proof} Since $Z(G)$ has finite index in $G_f$ \cite[Lemma 3]{Baer48},  it follows that $Z(G)\cap H$ has finite index in $H.$  Choose coset representatives $x_1,...,x_k\in H$ for $H/(Z(G)\cap H).$  For each $i$, there is a finite subset $F_i\subseteq G$ such that $\textup{conj}(x_i)=\{ bx_ib^{-1}:b\in F_i \}.$

Let $a\in G$ and $h\in H.$  Then $h=x_iz$ for some $i$ and $z\in Z(G).$  There is a $b\in F_i$ so $ax_ia^{-1}=bx_ib^{-1}.$
Hence $aha^{-1}=bx_izb^{-1}$ so setting $F=\cup_{i=1}^k F_i$ completes the proof.

\end{proof}
\begin{lemma} \label{lem:missconjclass} Let $G$ be a finitely generated nilpotent group and $H\leq G_f.$  Then there is a sequence of normal, finite index subgroups $(K_n)$ such that for all $x\in G$ if $x$ is not conjugate to an element of $H$, then there is an index $n_0$ so $n\geq n_0$ implies that $xK_n$ is not conjugate to an element of $HK_n.$
\end{lemma}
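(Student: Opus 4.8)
The plan is to turn the statement, which ranges over \emph{all} conjugates of $H$, into a statement about only \emph{finitely many} conjugates, and then apply the separation property of subgroups of finitely generated nilpotent groups (Lemma \ref{thm:nilsep}) to each of these finitely many subgroups at once. The entire input from the FC-center hypothesis $H\leq G_f$ is funneled through Lemma \ref{lem:containedinfc}, which is what makes the relevant family of conjugates finite.

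First I would record two set-theoretic translations. On the group level, $x$ is conjugate to an element of $H$ exactly when $x\in\bigcup_{a\in G}aHa^{-1}$; since Lemma \ref{lem:containedinfc} supplies a finite set $F\subseteq G$ with $aHa^{-1}\subseteq\bigcup_{b\in F}bHb^{-1}$ for all $a$, this union collapses to $\bigcup_{a\in G}aHa^{-1}=\bigcup_{b\in F}bHb^{-1}$, so the hypothesis becomes $x\notin bHb^{-1}$ for every $b\in F$. On the quotient level, unwinding the definition shows that $xK_n$ is conjugate to an element of $HK_n$ precisely when $axa^{-1}\in HK_n$ for some $a$; rearranging and using normality of $K_n$ this reads $x\in\bigcup_{a\in G}(a^{-1}Ha)K_n=\big(\bigcup_{a\in G}aHa^{-1}\big)K_n$. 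Applying Lemma \ref{lem:containedinfc} a second time collapses this to $\bigcup_{b\in F}(bHb^{-1})K_n=\bigcup_{b\in F}b(HK_n)b^{-1}$, so that $xK_n$ fails to be conjugate to an element of $HK_n$ exactly when $x\notin(bHb^{-1})K_n$ for every $b\in F$.

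With both translations in place the construction is immediate. Each $bHb^{-1}$ is a subgroup of $G$, hence separated by Lemma \ref{thm:nilsep}; intersecting these finitely many separating sequences termwise and invoking Remark \ref{rem:moresep} produces a single sequence $(K_n)$ of finite-index normal subgroups that separates $bHb^{-1}$ for \emph{every} $b\in F$ simultaneously. Then if $x$ is not conjugate to an element of $H$, we have $x\notin bHb^{-1}$ for each $b\in F$, so for each such $b$ separation yields an index beyond which $x\notin(bHb^{-1})K_n$; taking the maximum over the finite set $F$ gives the single index $N$ demanded by the statement.

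The step I expect to require the most care is the quotient bookkeeping: verifying that ``$xK_n$ is conjugate to an element of $HK_n$'' genuinely unwinds to $x\in\bigcup_{b\in F}b(HK_n)b^{-1}$, and in particular that Lemma \ref{lem:containedinfc} may legitimately be reused to replace the full conjugacy orbit of $H$ by the finite family indexed by $F$ even after multiplying by $K_n$. Once this identification is pinned down, the finiteness of $F$—the real dividend of the hypothesis $H\leq G_f$—together with Remark \ref{rem:moresep} renders the passage from separating each $bHb^{-1}$ individually to separating all of them uniformly completely routine.
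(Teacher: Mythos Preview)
Your proposal is correct and follows essentially the same route as the paper: obtain the finite set $F$ from Lemma \ref{lem:containedinfc}, use Lemma \ref{thm:nilsep} on each $bHb^{-1}$ together with Remark \ref{rem:moresep} to get a single separating sequence, and then identify the quotient condition with membership in $\bigcup_{b\in F}(bHb^{-1})K_n$. The paper's proof is terser, but your more explicit set-theoretic unwinding of ``$xK_n$ conjugate to an element of $HK_n$'' is exactly the bookkeeping hidden in the paper's displayed equality $\bigcup_{b\in F}bHb^{-1}K_n=\bigcup_{h\in H}\textup{conj}(hK_n)$.
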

\begin{proof} From Lemma \ref{lem:containedinfc} obtain the finite set $F$ for $H.$ By Lemma \ref{thm:nilsep} applied to the groups $bHb^{-1}$ with $b\in F$ and then by Remark \ref{rem:moresep} there is a sequence $K_n$ such that $x\not\in \cup_{b\in F} bHb^{-1}$  implies
\begin{equation*}
xK_n\not\in \bigcup_{b\in F} bHb^{-1}K_n=\bigcup_{h\in H} \textup{conj}(hK_n).
\end{equation*}
where the $\supseteq$ inclusion comes from Lemma \ref{lem:containedinfc}.
\end{proof}
The proof of this section's main result relies on Kaniuth's induced traces for nilpotent groups \cite{Kaniuth06}.  We first recall the necessary details for us and refer the reader to Kaniuth's paper for the full story.  Let $G$ be a finitely generated nilpotent group and $H\leq G_f$ an abelian subgroup of the finite conjugacy subgroup of $G$. Let $\omega:H\rightarrow \T$ be a multiplicative character.   Define $\tilde \omega :G\rightarrow \C$ by $\tilde \omega (x)=\omega(x)$ if $x\in H$ and $\tilde \omega(x)=0$ otherwise. If $x\not\in G_f$ then we define $\text{Ind}_H^G \omega (x)=0.$  If $x\in G_f$, then the size of the conjugacy class is equal to the index $[G:C_G(x)]$ where $C_G(x)$ is the centralizer of $x.$ Let $A_x$ be a complete choice of coset representatives of $G/C_G(x).$  Then define
\begin{equation}\label{def:inducedtrace}
\text{Ind}_H^G\omega(x) = \frac{1}{[G:C_G(x)]}\sum_{a\in A_x} \tilde \omega(axa^{-1}).
\end{equation}
\begin{theorem}\label{thm:nilapprox} Every finitely generated nilpotent group is HS-stable.
\end{theorem}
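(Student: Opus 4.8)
The plan is to use the amenable characterization of HS-stability: since every finitely generated nilpotent group $G$ is amenable, it suffices to show that every trace $\tau$ on $G$ is a pointwise limit of normalized traces of finite-dimensional representations. By the footnote, I may reduce to approximating \emph{characters} (extreme traces) rather than arbitrary traces, which is the crucial simplification. So I would first invoke the known classification of characters of finitely generated nilpotent groups due to Kaniuth: every character of $G$ arises as an induced trace $\mathrm{Ind}_H^G$ built from a multiplicative character $\omega:H\to\T$ on an abelian subgroup $H\leq G_f$ of the finite-conjugacy subgroup, via formula (\ref{def:inducedtrace}). The entire machinery of Lemmas \ref{lem:approxhomo} through \ref{lem:missconjclass} has clearly been assembled to approximate exactly such an induced character.

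The key steps, in order, would be the following. Fix a character $\tau=\mathrm{Ind}_H^G$ coming from $(H,\omega)$ with $H\leq G_f$ abelian. First I would apply Lemma \ref{lem:approxhomo} to the character $\omega:H\to\T$ to get finite-image homomorphisms $\omega_n:H\to\T$ with finite-index kernel converging pointwise to $\omega$; replacing $\omega$ by $\omega_n$ changes $\mathrm{Ind}_H^G$ only slightly by the finiteness of the relevant conjugacy sums, so it reduces to the case where $\omega$ itself has finite-index kernel $H_0=\ker\omega$ in $H$. Next, the separation lemmas do the real work: using Lemma \ref{lem:missconjclass} I obtain a sequence of finite-index normal subgroups $(K_n)$ of $G$ so that elements not conjugate into $H$ are, modulo $K_n$, eventually not conjugate into $HK_n$; intersecting with further finite-index normal subgroups via Remark \ref{rem:moresep}, and using Lemmas \ref{lem:extendgroups} and \ref{lem:trivint} to ensure these subgroups cut $H$ and the center down correctly, I arrange that $H K_n / K_n$ is a finite abelian group on which $\omega$ descends to a genuine character. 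Then I would induce $\omega$ (now a character of a subgroup of the \emph{finite} quotient $G/K_n$) up to the finite group $G/K_n$ to obtain an honest finite-dimensional representation $\pi_n$ of $G$, and compute that its normalized trace is the analogous finite induced-character formula on $G/K_n$.

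The final step is to check pointwise convergence $\mathrm{tr}\,\pi_n(x)\to\tau(x)$ for each fixed $x\in G$. For $x\notin G_f$ one has $\tau(x)=0$, and I would argue that $\mathrm{tr}\,\pi_n(x)=0$ as well for large $n$, because $x$ is eventually not conjugate into $HK_n$ (this is precisely what Lemma \ref{lem:missconjclass} delivers), so the induced-character sum over conjugates vanishes. For $x\in G_f$, the finite-conjugacy-class structure controlled by Lemma \ref{lem:containedinfc} guarantees that the finite sum defining the induced trace on $G/K_n$ matches term-by-term, for large $n$, the sum (\ref{def:inducedtrace}) defining $\tau(x)$, with $\omega_n(\cdot)\to\omega(\cdot)$ on the relevant finitely many elements. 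The main obstacle I anticipate is the bookkeeping in this matching: I must ensure simultaneously that the separating subgroups $K_n$ (i) preserve the conjugacy structure of the finitely many relevant coset representatives $x_i$ of $H/(Z(G)\cap H)$, (ii) keep $\omega$ well-defined on $HK_n/K_n$, and (iii) separate non-conjugate elements, all uniformly enough that the centralizer indices $[G:C_G(x)]$ appearing in the normalization agree with $[G/K_n:C_{G/K_n}(xK_n)]$ for large $n$. Reconciling these three conditions—essentially showing the passage to the finite quotient is faithful on all the combinatorial data entering Kaniuth's formula—is where the preliminary lemmas must be deployed with care, but each individual requirement has been prepared by a corresponding lemma above.
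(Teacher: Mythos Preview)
Your proposal is correct and follows essentially the same route as the paper. The one step you do not spell out---and which resolves exactly the bookkeeping obstacle you anticipate in your condition (iii)---is to first pass to the quotient by $\{g\in G:\phi(g)=1\}$ so that the character $\phi$ becomes faithful; this forces $Z(G)\leq H$ (since $\phi$ is multiplicative and nonvanishing on the center, every central element must lie in $H$), which is what makes Lemmas \ref{lem:extendgroups} and \ref{lem:trivint} applicable and yields $\bigcap_n(K_n\cap L_n)=\{e\}$, and this trivial intersection is precisely what guarantees that $|\mathrm{conj}(x)|=|\mathrm{conj}(x(K_n\cap L_n))|$ for large $n$, matching the normalizing factors $[G:C_G(x)]$ with their finite-quotient counterparts.
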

\begin{proof} Let $G$ be a finitely generated nilpotent group and $\phi$ a character on $G.$
After modding out by the normal subgroup  $\{ g\in G:\phi(g)=1 \}$ we may assume that $\phi$ is faithful, i.e. that $\phi(g)=1$ implies $g=e.$ By \cite[Theorem 1.6]{Kaniuth06} there is a subgroup $H\leq G_f$  and a multiplicative character $\omega:H\rightarrow \T$ such that $\phi = \textup{Ind}_H^G \omega.$

Since $\phi$ is an extreme trace it restricts to a homomorphism on $Z(G)$. Therefore for $x\in Z(G)$ we have $\phi(x)\neq0.$ Hence $x$ is conjugate to an element of $H.$  But $x$ is only conjugate to itself so $x\in H$, i.e $Z(G)\leq H.$

Obtain a sequence $(K_n)$ for $H$ as in Lemma \ref{lem:missconjclass}.
By Lemma \ref{lem:approxhomo} there are homomorphisms $\omega_n:H\rightarrow \mathbb{T}$ with finite index kernels such that
\begin{equation*}
\lim_{n\rightarrow\infty}\omega_n(h)=\omega(h).
\end{equation*}
Replace $K_n$ by $K_n\bigcap \textup{ker} (\omega_n)$. By Remark \ref{rem:moresep} it still satisfies the conclusion of Lemma \ref{lem:missconjclass}.
Since $Z(G)$ has finite index in $G_f$ (\cite[Lemma 3]{Baer48}), it has finite index in $H$ and  it follows that $\textup{ker}(\omega_n|_{Z(G)})$ also has finite index in $Z(G)$.  Furthermore we have
\begin{equation}\label{eq1}
\bigcap_{n=1}^\infty (Z(G)\cap \textup{ker}(\omega_n))\subseteq Z(G)\cap \textup{ker}(\omega)=\{ e \}.
\end{equation}
Use Lemma \ref{lem:extendgroups} to obtain finite index normal subgroups $L_n\leq G$ with $L_n \cap Z(G)=\textup{ker}(\omega_n)\cap Z(G).$ By Remark \ref{rem:moresep}, the sequence $K_n\cap L_n$ still satisfies the conclusion of Lemma
\ref{lem:missconjclass} and by Lemma \ref{lem:trivint} and (\ref{eq1}) we have $\cap (K_n\cap L_n) = \{ e \}.$  Notice that since $K_n\cap L_n \cap H \leq \textup{ker}(\omega_n)$,  $\omega_n$ descends to a well defined homomorphism on $H/(K_n\cap L_n).$

Define $\phi_n:G\rightarrow \C$ by
\begin{equation*}
\phi_n(x) =\textup{Ind}_{H/(K_n\cap L_n)}^{G/(K_n\cap L_n)}\omega_n(x(K_n\cap L_n))
\end{equation*}
Suppose that $x$ is not conjugate to any element of $H.$  Then there is an $n_0$ large enough so $n\geq n_0$ implies $x(K_n\cap L_n)$ is not conjugate to any element of $H(K_n\cap L_n).$  Hence $0=\phi(x)=\phi_n(x).$ Next suppose $x$ is conjugate to an element of $H.$  Since both $\phi$ and $\phi_n$ are constant on conjugacy classes we may assume that $x\in H.$ Since $\cap (K_n\cap L_n)=\{e \}$ there is an $n_1\geq n_0$ so $n\geq n_1$ implies that $|\textup{conj}(x)|=|\textup{conj}(x(K_n\cap L_n))|.$  Since $\omega_n(x)\rightarrow \omega(x)$ we also have $\tilde \omega_n (axa^{-1})\rightarrow \tilde \omega(axa^{-1})$ for all $a\in G.$  It now follows from (\ref{def:inducedtrace}) that $\phi_n(x)\rightarrow \phi(x).$

Notice that the GNS representation associated with $\phi_n$ is finite dimensional, hence $G$ is HS-stable by Theorem \ref{thm:HSchar}. \end{proof}

\section{A few observations on MAP groups}\label{sec:MAP}

Since maximal almost periodicity (MAP) is a necessary condition for an amenable group to be HS-stable,
one needs some tools for proving/disproving MAP.  We provide a few observations on this topic including some easy-to-check obstructions to MAP that will be used in this and subsequent sections.

\subsection{Solvable MAP groups} The first three (or four)  lemmas are well known.

\begin{lemma}\label{Lemma1} Let $\Gamma$ be a Lie group and let $\Gamma_0$ be the connected component of the identity. Then
$\Gamma_0$ is an open normal subgroup of $\Gamma$.
\end{lemma}
\begin{proof} Any element of $\Gamma$ has a neighborhood homeomorphic to $\mathbb R^n$ and is therefore connected. Thus each element of $\Gamma_0$ has a neighborhood contained in $\Gamma_0$, so $\Gamma_0$ is open. Since $\Gamma_0 \times  \Gamma_0$ is connected, its image under the continuous map $\Gamma\times \Gamma \to \Gamma$, $(g, h) \mapsto gh$, is connected and contains the unit, hence is contained in $\Gamma_0$, so $\Gamma_0$ is a subgroup. It is clearly normal.\end{proof}

\begin{lemma}\label{Lemma2} If $\Gamma$ is a compact Lie group, then $\Gamma/\Gamma_0$ is finite.
\end{lemma}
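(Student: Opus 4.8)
The plan is to leverage Lemma \ref{Lemma1}, which already tells us that $\Gamma_0$ is an \emph{open} normal subgroup of $\Gamma$. The entire argument then reduces to elementary point-set topology: an open subgroup of a compact group has finite index. So the real content is packaging the openness of $\Gamma_0$ together with compactness of $\Gamma$.

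First I would observe that the left cosets $\{g\Gamma_0 : g\in\Gamma\}$ partition $\Gamma$ into pairwise disjoint sets, and that each coset is open. The openness follows because left translation $x\mapsto gx$ is a homeomorphism of $\Gamma$ (continuous with continuous inverse $x\mapsto g^{-1}x$), and it carries the open set $\Gamma_0$ onto $g\Gamma_0$. Thus $\{g\Gamma_0\}$ is an open cover of $\Gamma$ by mutually disjoint sets.

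Next I would invoke compactness of $\Gamma$: the open cover $\{g\Gamma_0\}$ admits a finite subcover $g_1\Gamma_0,\ldots,g_m\Gamma_0$. Since the cosets are pairwise disjoint and they already covered $\Gamma$, no coset can be omitted, so in fact these finitely many cosets are \emph{all} of them. Hence there are only finitely many cosets, i.e.\ $[\Gamma:\Gamma_0]<\infty$, and since $\Gamma_0$ is normal this says exactly that the quotient group $\Gamma/\Gamma_0$ is finite.

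I do not expect any genuine obstacle here; the statement is a standard fact and the only thing to be careful about is citing Lemma \ref{Lemma1} correctly for the openness (rather than re-proving it) and justifying that translation by a fixed group element is a homeomorphism so that individual cosets inherit openness from $\Gamma_0$. One could alternatively note that the compactness hypothesis is slightly stronger than needed, but for the purposes of this paper the compact case is all that is required, so I would keep the proof short and direct.
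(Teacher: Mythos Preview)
Your proposal is correct and follows essentially the same argument as the paper: the cosets of the open subgroup $\Gamma_0$ form a disjoint open cover of $\Gamma$, and compactness forces there to be only finitely many. The paper's proof is just a terser version of what you wrote.
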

\begin{proof} $\Gamma$ is a disjoint union of cosets, each of them is homeomorphic to $\Gamma_0$, hence open by Lemma \ref{Lemma1}. By compactness there can be only finitely many of them. \end{proof}

\begin{lemma}\label{Lemma3} If a subgroup of a topological group is open, then it is closed.
\end{lemma}
\begin{proof} If $H$ is an open subgroup of $G$, then $G\setminus H$,  being union of cosets which are homeomorphic to $H$ and hence are open, is open.\end{proof}

\begin{lemma}\label{Lemma4}  If $G$ is a solvable subgroup of a topological group $K$, then $\overline G$ is solvable.
\end{lemma}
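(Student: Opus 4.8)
The plan is to prove the standard fact that the closure of a solvable subgroup is solvable, exploiting continuity of the group operations to transport the derived-series structure from $G$ to $\overline{G}$. The key observation is that the commutator map and multiplication are continuous, so closures interact well with commutator subgroups.

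First I would recall the definition: $G$ is solvable means its derived series $G = G^{(0)} \trianglerighteq G^{(1)} \trianglerighteq \cdots$ terminates with $G^{(n)} = \{e\}$ for some $n$, where $G^{(k+1)} = [G^{(k)}, G^{(k)}]$. The heart of the argument is the following claim, which I would isolate and prove by induction: for any subgroup $H$ of a topological group, the derived subgroup of the closure satisfies $(\overline{H})' \subseteq \overline{H'}$. To see this, fix $a, b \in \overline{H}$ and approximate them by nets (or sequences, in the metrizable case, though nets are safest in general topological groups) $a_\lambda \to a$ and $b_\lambda \to b$ with $a_\lambda, b_\lambda \in H$. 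By continuity of the map $(x,y) \mapsto xyx^{-1}y^{-1}$, the commutator $[a_\lambda, b_\lambda] \to [a,b]$, and since each $[a_\lambda, b_\lambda] \in H' \subseteq \overline{H'}$, the limit $[a,b]$ lies in the closed set $\overline{H'}$. Since $\overline{H'}$ is a closed subgroup containing all such commutators, it contains the subgroup they generate, giving $(\overline{H})' \subseteq \overline{H'}$.

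Next I would iterate this inclusion. Applying the claim repeatedly yields $(\overline{G})^{(k)} \subseteq \overline{G^{(k)}}$ for every $k$, which I would verify by induction on $k$: the base case is trivial, and the inductive step applies the claim to $H = G^{(k)}$ together with the inductive hypothesis and monotonicity of closure under the derived operation. Taking $k = n$ where $G^{(n)} = \{e\}$, we obtain $(\overline{G})^{(n)} \subseteq \overline{G^{(n)}} = \overline{\{e\}} = \{e\}$, since the closure of a point in a $T_1$ (in particular Hausdorff) topological group is itself. Hence the derived series of $\overline{G}$ terminates at step $n$, so $\overline{G}$ is solvable.

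The main subtlety, rather than a genuine obstacle, is ensuring that $\overline{G}$ is actually a subgroup so that its derived series makes sense; this follows from the standard fact that the closure of a subgroup of a topological group is again a subgroup, using continuity of multiplication and inversion. A second minor point is the appeal to $\overline{\{e\}} = \{e\}$, which requires the ambient group $K$ to be Hausdorff (or at least $T_1$); since the intended applications are to Lie groups and compact groups this holds automatically, but I would note the hypothesis explicitly. Everything else is routine continuity bookkeeping with nets.
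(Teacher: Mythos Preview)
Your proof is correct and follows essentially the same approach as the paper: both arguments use continuity of the commutator map to push a solvable series for $G$ to one for $\overline{G}$. The only cosmetic difference is that the paper phrases the key step as ``if $G/H$ is abelian then $\overline{G}/\overline{H}$ is abelian'' (working with an arbitrary subnormal series with abelian quotients), whereas you phrase it as $(\overline{H})' \subseteq \overline{H'}$ and work with the derived series directly; these are equivalent formulations of the same continuity observation.
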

\begin{proof}
 Since $G$ is solvable, there is a finite series $1 \triangleleft G_1 \triangleleft G_2 \triangleleft \ldots \triangleleft G$ with all $G_{i+1}/G_i$  abelian. Then we have a finite series
$1 \triangleleft \overline{G_1} \triangleleft \overline{G_2} \triangleleft \ldots \triangleleft \overline G$
and we only need to prove that $\overline{ G_{i+1}}/ \overline{ G_i}$  is abelian.
More generally, if $G/H$ is abelian, then $\overline G/ \overline H$ is abelian. Indeed, let $x, y \in \overline  G$. Then $x = \lim x_n, y= \lim y_n$, and $x_n y_n x_n^{-1}y_n^{-1}\in H$.  Then
$x y x^{-1}y^{-1} = \lim x_n y_n x_n^{-1}y_n^{-1}\in \overline H$, which means that
$\overline  G/\overline  H$ is abelian.
\end{proof}

\medskip

\begin{proposition}\label{abelian-by-RF} Let $G$ be a MAP solvable group. Then the intersection of all finite index normal subgroups of $G$ is abelian. In particular, $G$ is abelian-by-residually finite.
\end{proposition}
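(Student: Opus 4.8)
The plan is to combine the Peter--Weyl characterization of MAP groups with the structure theory of compact groups. Since $G$ is MAP it embeds into a compact group, and after passing to the closure of its image I may assume that $G$ is a dense subgroup of a compact group $K$. Because $G$ is solvable, Lemma~\ref{Lemma4} gives that $K=\overline{G}$ is solvable as well. Let $K_0$ be the connected component of the identity in $K$; then $K/K_0$ is a compact totally disconnected group, that is, a profinite group.

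The first ingredient is that $K_0$ is abelian, and this is exactly where solvability is used. Indeed $K_0$ is a compact connected solvable group: it is closed (hence compact), connected by definition, and solvable as a subgroup of $K$. Realizing $K_0$ as a projective limit of its compact Lie quotients $K_0/(K_0\cap N)$, where $N$ ranges over closed normal subgroups of $K$ with $K/N$ a Lie group, each such quotient is a connected (being a continuous image of $K_0$), compact, solvable Lie group, hence a torus; since these quotients separate the points of $K_0$, we conclude that $K_0$ embeds into a product of tori and is therefore abelian.

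The step I expect to require the most care is bridging back from topology to the purely algebraic statement, namely showing
\begin{equation*}
R:=\bigcap_{N\lhd_f G} N \subseteq G\cap K_0 .
\end{equation*}
I would argue contrapositively. Suppose $g\in G$ with $g\notin K_0$. Its image in the profinite group $K/K_0$ is nontrivial, so there is an open normal subgroup $M\lhd K/K_0$ of finite index avoiding it; pulling back along $K\to K/K_0$ yields an open finite-index normal subgroup $M'\lhd K$ with $g\notin M'$. The intersection $G\cap M'$ is normal in $G$, and since $G$ is dense in $K$ and $M'$ is open of finite index we have $[G:G\cap M']=[K:M']<\infty$. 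Thus $G\cap M'$ is a finite-index normal subgroup of $G$ not containing $g$, so $g\notin R$. This proves $R\subseteq K_0$, and as $K_0$ is abelian, $R$ is abelian.

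For the final assertion, $R$ is a normal abelian subgroup of $G$, while the finite-index normal subgroups of $G/R$ are precisely the images of those of $G$ (each of which contains $R$ by the definition of $R$); their intersection is trivial, so $G/R$ is residually finite. Hence $G$ is abelian-by-RF.
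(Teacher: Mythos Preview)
Your argument is correct. Both proofs hinge on the same structural fact---a compact connected solvable group is abelian---but they organize the argument differently. The paper works one finite-dimensional representation at a time: for each $\pi:G\to\mathcal U(n)$ it takes $\Gamma=\overline{\pi(G)}$, which is automatically a compact Lie group by Cartan's theorem, shows $\pi(H)\subseteq\Gamma_0$ because $\Gamma/\Gamma_0$ is finite, and then invokes MAP at the end to conclude that $H$ is abelian since every $\pi$ kills its commutators. You instead embed $G$ once into a single compact group $K$, identify $R$ inside $K_0$ via the profinite structure of $K/K_0$, and reduce the abelianness of $K_0$ to the Lie case through the projective-limit description of compact groups. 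The paper's route is slightly more self-contained, needing only Cartan's theorem for closed subgroups of $\mathcal U(n)$ and the finiteness of $\Gamma/\Gamma_0$; your route requires the general approximation of compact groups by Lie quotients, but in return it yields the more transparent picture that $G/R$ actually embeds into the profinite group $K/K_0$, making the ``abelian-by-RF'' conclusion immediate.
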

\begin{proof}
Recall the notation $N\lhd_f G$  means that $N$ is a normal finite index subgroup of $G$. Let
\begin{equation*}
H = \bigcap_{N\lhd_f G} N.
\end{equation*}
Then $G/H$ is residually finite.
We will show that $H$ is abelian. Let $\pi$ be a finite-dimensional representation of $G$, i.e. $\pi: G \to \mathcal U(n)$, for some $n\in \mathbb N$.
Let $\Gamma  = \overline{\pi(G)}$.  Then $\Gamma$ is compact and by Cartan's theorem (\cite[Theorem 34.7]{Zhelobenko06}),  it is a Lie group.
By Lemma \ref{Lemma2},  $\Gamma/\Gamma_0$ is finite. Then the kernel of the composition of $\pi$ with $\Gamma\to \Gamma/\Gamma_0$ is a finite index normal subgroup and hence contains $H$.   Therefore for any $h\in H$ we have
\begin{equation}\label{main}\pi(h) \in \Gamma_0.\end{equation}
Since  $G/\ker \;\pi$ is solvable, by Lemma \ref{Lemma4}  $\Gamma$ is solvable. Using Lemma \ref{Lemma1},  Lemma \ref{Lemma3} and again Cartan's theorem, we conclude that $\Gamma_0$ is a compact connected (Hausdorff) solvable group. By \cite[Corollary 29.5]{Zhelobenko06}  $\Gamma_0$ is abelian.
Then by (\ref{main}), for any $h_1, h_2\in H$, $\pi(h_1h_2h_1^{-1}h_2^{-1}) = 1.$ Since it holds for any finite-dimensional representation $\pi$ and since $G$ is MAP, for any $h_1, h_2\in H$ we have  $h_1h_2h_1^{-1}h_2^{-1} = 1,$ so $H$ is abelian.
\end{proof}

\begin{example} \label{ex:nonMAP}Proposition \ref{abelian-by-RF} provides an easy method to generate non MAP groups.
\begin{enumerate}

\item  $\mathbb H_3(\mathbb Q) = \left\{  \left( \begin{array}{ccc}  1 & x & z \\ 0 & 1 & y\\ 0 & 0 & 1 \end{array}  \right) : x,y,z\in \mathbb Q \right\}$ is not MAP.

Indeed, the subgroups $$\left\{  \left( \begin{array}{ccc}  1 & \ast & 0 \\ 0 & 1 & 0\\ 0 & 0 & 1 \end{array}  \right) \right\} \;\text{and}\; \left\{  \left( \begin{array}{ccc}  1 & 0 & 0 \\ 0 & 1 & \ast \\ 0 & 0 & 1 \end{array}  \right) \right\}$$
do not commute and are isomorphic to $\mathbb Q$. Then the intersection  of all normal subgroups of finite index contains both of them and therefore is not abelian.  By Proposition \ref{abelian-by-RF}, $\mathbb H_3(\mathbb Q)$ is not MAP. Since $\mathbb{H}_3(\Q)$ is a class 2 nilpotent group this example shows that finite generation in Theorem \ref{thm:nilapprox} is necessary.

\item  We generalize the above example.  Suppose $G$ and $H$ are solvable and do not admit non-trivial homomorphisms to finite groups. Then any non-trivial semi-direct product $G \rtimes H$ is not MAP. Indeed,  since $G \rtimes H$ is a non-trivial semi-direct product, there exist $g\in G$ and $h\in H$ such that $gh \neq hg$. Since the intersection  of all normal subgroups of finite index contains both $G$ and $H$, it is not abelian. Since $G \rtimes H$ is solvable, the statement follows from Proposition \ref{abelian-by-RF}. 
\end{enumerate}
\end{example}
\subsection{Another MAP obstruction}

\begin{proposition}\label{ref:MAPobstruction} If $G$ is MAP, then $[G, G] \bigcap Z(G)$ is residually finite.
\end{proposition}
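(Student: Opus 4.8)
The plan is to exploit Schur's lemma together with the elementary fact that a commutator in a unitary group has determinant $1$. Set $A = [G,G]\cap Z(G)$. As a subgroup of the center, $A$ is abelian, so residual finiteness of $A$ amounts to separating points by finite quotients. Concretely, it suffices to produce, for each nontrivial $z\in A$, a homomorphism from $A$ to a finite group that does not kill $z$; since $A$ is abelian we may equivalently seek a character $\chi\colon A\to\T$ with finite image and $\chi(z)\neq 1$. The collection of all such finite-order characters then separates the points of $A$, which is exactly the statement that $A$ is RF.

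So fix $z\in A$ with $z\neq e$. Since $G$ is MAP there is a finite-dimensional unitary representation $\pi\colon G \to \mathcal{U}(n)$ with $\pi(z)\neq \mathbb 1$. Decomposing $\pi$ into irreducibles (finite-dimensional unitary representations are completely reducible), we find an irreducible summand $\rho\colon G \to \mathcal{U}(m)$ with $\rho(z)\neq \mathbb 1$. Because $z\in Z(G)$ and $\rho$ is irreducible, Schur's lemma shows that $\rho$ restricted to $Z(G)$ is scalar: there is a character $\chi\colon Z(G) \to \T$ with $\rho(x) = \chi(x)\,\mathbb 1$ for all $x\in Z(G)$, and $\chi(z)\neq 1$ by construction.

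The crux is then to see that $\chi$ takes only finitely many values on $A$. Every $a\in A$ lies in $[G,G]$, so $\rho(a)$ is a product of commutators in $\mathcal{U}(m)$ and hence $\det\rho(a)=1$. On the other hand $\rho(a)=\chi(a)\,\mathbb 1$ gives $\det\rho(a)=\chi(a)^m$, so $\chi(a)^m=1$ for every $a\in A$. Thus $\chi$ maps $A$ into the group of $m$-th roots of unity, so $\chi|_A\colon A \to \T$ has finite cyclic image while still satisfying $\chi(z)\neq 1$. As $z$ was an arbitrary nontrivial element of $A$, these characters separate the points of $A$, proving that $A=[G,G]\cap Z(G)$ is RF. The only genuinely clever step is the determinant observation that forces the central scalar $\chi(z)$ to be a root of unity; everything else is a routine application of complete reducibility and Schur's lemma, and it is worth noting that the argument uses only that $z$ is central and a product of commutators, never the global structure of $G$.
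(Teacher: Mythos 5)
Your proof is correct and is essentially the paper's argument: both rest on the observation that an irreducible finite-dimensional unitary representation sends $Z(G)$ to scalars (Schur) and $[G,G]$ to determinant-one matrices, so $[G,G]\cap Z(G)$ lands in the finite group of scalar matrices with determinant one, and MAP plus complete reducibility ensures these representations separate points. You merely spell out the complete-reducibility step and the roots-of-unity computation that the paper leaves implicit.
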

\begin{proof}  Let $\pi$ be a finite-dimensional irreducible representation of $G$. Then $\pi$ sends elements of $Z(G)$ to scalar matrices and it sends elements of $[G, G]$  to matrices with determinant 1.
Thus elements of $[G, G] \bigcap Z(G)$ go to the set of scalar matrices with determinant 1 which is a finite group. Since $$\{\pi\;|_{[G, G] \bigcap Z(G)} \;|\; \pi \;\text{is a finite-dimensional irreducible representation of} \;G\}$$
separates points of $[G, G] \bigcap Z(G)$, $[G, G] \bigcap Z(G)$ is residually finite.
\end{proof}
Proposition \ref{ref:MAPobstruction} gives another proof that $\mathbb H_3(\mathbb Q)$ is not MAP. See Corollary \ref{lem:anothernonMAP} for another application.

\subsection{A characterization of certain MAP groups}

\begin{proposition}\label{prop:Mapchar} Let $H$ be amenable, $G$ abelian, and  $H$ act on $G$.
The following are equivalent:

\medskip

(i) $G \rtimes H$ is MAP,

\medskip

(ii) $H$ is MAP and, with respect to the induced action of $H $ on $\hat G$,  the set of characters with finite orbits is dense in $\hat G$.
\end{proposition}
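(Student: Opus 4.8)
The plan is to translate maximal almost periodicity of $K:=G\rtimes\Gamma$ into a statement about finite-dimensional unitary representations and to analyze how such a representation decomposes over the abelian normal subgroup $G$. Before touching either implication I would recast condition (ii) in a more usable form. Write $\hat\gamma$ for the induced action on $\hat G$ and let $P=\{\chi\in\hat G:\ \chi\text{ has finite }\hat\gamma\text{-orbit}\}$. Since $\hat\gamma$ acts by continuous automorphisms, $\hat\gamma(\chi\chi')=\hat\gamma(\chi)\hat\gamma(\chi')$, so the orbit of a product (resp.\ inverse) sits inside the product (resp.\ inverse) of orbits; hence $P$ is a subgroup of $\hat G$. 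By Pontryagin duality the closure of a subgroup satisfies $\overline P=(P^\perp)^\perp$, where $P^\perp=\{g\in G:\chi(g)=1\text{ for all }\chi\in P\}$. Thus $P$ is dense in $\hat G$ if and only if $P^\perp=\{e\}$, i.e.\ if and only if $P$ separates the points of $G$. So (ii) is equivalent to: $\Gamma$ is MAP and the finite-orbit characters separate $G$.

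The technical core is a single observation about any finite-dimensional unitary representation $\pi:K\to\mathcal U(n)$. As $G$ is abelian and normal, $\pi|_G$ is a direct sum of isotypic subspaces $H_\chi$ over a finite set $S\subseteq\hat G$, and the semidirect-product relation $\pi(\gamma)\pi(g)\pi(\gamma)^{-1}=\pi(\gamma\cdot g)$ gives $\pi(\gamma)H_\chi=H_{\hat\gamma(\chi)}$. Hence $\Gamma$ permutes the finite set $S$, so every character occurring in $\pi|_G$ has finite orbit, i.e.\ lies in $P$. This immediately yields (i)$\Rightarrow$(ii): first, $\Gamma\cong\{e\}\rtimes\Gamma$ is a subgroup of the MAP group $K$, and a subgroup of a group embedding in a compact group again embeds in that compact group, so $\Gamma$ is MAP; second, for $g\neq e$ a separating representation $\pi$ with $\pi(g)\neq 1$ must contain in $\pi|_G$ some $\chi\in S$ with $\chi(g)\neq 1$, and $\chi\in P$, so $P$ separates $G$.

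For (ii)$\Rightarrow$(i) I would separate an arbitrary nontrivial $(g,\gamma)\in K$ by a finite-dimensional representation, splitting into two cases. If $\gamma\neq e$, I compose a finite-dimensional representation of $\Gamma$ detecting $\gamma$ with the quotient map $K\to\Gamma$. If $\gamma=e$ and $g\neq e$, I choose $\chi\in P$ with $\chi(g)\neq 1$; its stabilizer $\Gamma_\chi=\{\gamma:\hat\gamma(\chi)=\chi\}$ has finite index $k$ in $\Gamma$ by orbit--stabilizer, and because $\chi(\gamma'\cdot h)=\chi(h)$ for every $\gamma'\in\Gamma_\chi$, the formula $\psi(h,\gamma')=\chi(h)$ defines a one-dimensional representation of the finite-index subgroup $G\rtimes\Gamma_\chi$. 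Inducing $\psi$ from $G\rtimes\Gamma_\chi$ up to $K$ produces a representation of dimension $k$; restricting it to the normal subgroup $G$ gives the sum of the characters in the $\hat\gamma$-orbit of $\chi$, which in particular contains $\chi$, so the induced representation is nontrivial on $(g,e)$. Together the two cases show $K$ is MAP.

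I expect the $(g,e)$ case of (ii)$\Rightarrow$(i) to be the main obstacle: the hypothesis supplies only the finiteness of an orbit, and one must convert this into an honest finite-dimensional representation. Finiteness of the index $[\Gamma:\Gamma_\chi]$ is exactly what keeps the induced representation finite-dimensional, while the Mackey-type computation of its restriction to $G$ is what guarantees $\chi$ survives and hence that $(g,e)$ is detected. A smaller but essential point is the verification that $P$ is a subgroup, since the duality reformulation of (ii)---and with it the clean equivalence between density and separation---relies on it. I also note that amenability of $\Gamma$ does not appear to enter this particular proposition, only that $G$ is abelian.
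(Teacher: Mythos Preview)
Your argument is correct and complete. The reformulation of density of $P$ as separation of $G$ via $\overline P=(P^\perp)^\perp$ is valid because $P$ is a subgroup, the isotypic-permutation observation gives (i)$\Rightarrow$(ii) cleanly, and for (ii)$\Rightarrow$(i) the one-dimensional representation $\psi$ of $G\rtimes\Gamma_\chi$ is well-defined precisely because $\Gamma_\chi$ stabilizes $\chi$, while finiteness of $[\Gamma:\Gamma_\chi]$ keeps the induced representation finite-dimensional and its restriction to $G$ visibly contains $\chi$.

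Your route, however, is quite different from the paper's. The paper does not argue directly at the level of representations of the group; instead it invokes a C*-algebraic fact: for a discrete amenable $\Gamma$ acting on a compact metric space $X$, the crossed product $C(X)\rtimes_\alpha\Gamma$ is residually finite-dimensional if and only if $\Gamma$ is MAP and the set of points with finite orbit is dense in $X$. Applying this with $X=\hat G$ and using that $C^*(G\rtimes\Gamma)\cong C(\hat G)\rtimes\Gamma$ together with the Bekka--Louvet equivalence (for amenable groups) between MAP and RFD of the group C*-algebra, one gets the proposition in a single citation. The paper's approach is shorter on the page but leans on substantial operator-algebraic machinery, including a result listed as work in progress; your approach is self-contained, uses only classical Mackey-type induction, and---as you correctly observe---does not use amenability of $\Gamma$ at all. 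So your proof in fact establishes a slightly stronger statement than the one in the paper.
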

\begin{proof} If $H$ is a discrete amenable group, $X$ a compact metric space and $\alpha$ an action of $H$ on $X$, then the crossed product C*-algebra
$C(X)\rtimes_{\alpha} H$ is residually finite-dimensional  (RFD) if and only if  $H$ is MAP and the union of finite orbits is dense in $X$  (\cite{Tomiyama87} and \cite{SkalskiShulman}). Thus $(ii)$ is equivalent to $C^*(G\rtimes H) = C(\hat G)\rtimes H$ being RFD. Since $H$ is amenable, by \cite{Bekka} the C*-algebra $C^*(G\rtimes H)$ being RFD is equivalent to $G\rtimes H$ being MAP. 
\end{proof}

\begin{example} We consider an example of a non-HS-stable semidirect product from \cite{Levit22}. Let $G = \mathbb Z(p^{\infty})$ be the Pr{\"u}fer $p$-group, $\Gamma= \mathbb Z$ and the action is the multiplication by powers of $q$, where $q$ is a prime distinct from $p$.
Then $\hat G$ is the group of $p$-adic integers and it is easy to check that the corresponding $\mathbb Z$-action on $\hat G$ does not have any periodic points. Therefore  $G \rtimes \Gamma$ is not MAP.
\end{example}

\section{Central extensions and HS-stable, permutation unstable groups}\label{sec:examples}
When searching for an amenable HS-stable non permutation stable group we were led naturally to consider the (in)permanence of HS-stability by central quotients.  We will eventually use central quotients to build our example but we start with the following Proposition which, as we will see, is the most general result possible for central quotients.

\begin{proposition}\label{prop:stableext} Suppose we have an extension $1 \to N \to G \to H \to 1$, where $N$ is finite and central and $G$ is an HS-stable group. Then $H$ is HS-stable.
\end{proposition}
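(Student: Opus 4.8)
The plan is to pull an approximate representation of $H$ back to $G$, stabilize it there using the HS-stability of $G$, and then repair the resulting genuine representation so that it kills $N$ and therefore descends to $H$. Write $q:G\to H$ for the quotient map, so that $N=\ker q$; since $N$ is central it is in particular abelian and finite. Let $\phi_n:H\to\mathcal U(k_n)$ be an approximate representation. Because $q$ is a homomorphism, $\phi_n\circ q:G\to\mathcal U(k_n)$ is an approximate representation of $G$ (its multiplicativity defect at $(g_1,g_2)$ is exactly the defect of $\phi_n$ at $(q(g_1),q(g_2))$, which tends to $0$). Invoking HS-stability of $G$, I obtain genuine representations $\rho_n:G\to\mathcal U(k_n)$, of the same dimensions, with $\|\rho_n(g)-\phi_n(q(g))\|_2\to 0$ for every $g\in G$.

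The key observation is that $\rho_n$ is already \emph{approximately} trivial on $N$: for $z\in N$ we have $q(z)=e$, so $\phi_n(q(z))=\phi_n(e)=\mathbb 1$ because the maps are unital, and hence $\|\rho_n(z)-\mathbb 1\|_2\to 0$. To upgrade this to exact triviality while preserving the dimension, I would average over the finite group $N$ and set
\[
P_n=\frac{1}{|N|}\sum_{z\in N}\rho_n(z).
\]
Since $N$ is finite, $P_n$ is the orthogonal projection onto the space of $N$-fixed vectors; since $N$ is central in $G$, each $P_n$ commutes with every $\rho_n(g)$; and by the triangle inequality $\|P_n-\mathbb 1\|_2\le |N|^{-1}\sum_{z\in N}\|\rho_n(z)-\mathbb 1\|_2\to 0$.

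I would then define
\[
\pi_n(g)=\rho_n(g)P_n+(\mathbb 1-P_n).
\]
Because $P_n$ is a projection commuting with $\rho_n(g)$, the subspaces $\operatorname{ran}P_n$ and $\operatorname{ran}(\mathbb 1-P_n)$ are $\rho_n(G)$-invariant, and $\pi_n$ is simply the direct sum of the subrepresentation of $\rho_n$ on $\operatorname{ran}P_n$ with the trivial representation on $\operatorname{ran}(\mathbb 1-P_n)$; hence $\pi_n$ is a genuine unitary representation of $G$. Moreover $\pi_n(z)=P_n+(\mathbb 1-P_n)=\mathbb 1$ for $z\in N$, since $\operatorname{ran}P_n$ consists of $N$-fixed vectors, so $N\subseteq\ker\pi_n$ and $\pi_n$ factors as $\overline\pi_n\circ q$ for a representation $\overline\pi_n:H\to\mathcal U(k_n)$. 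Finally the estimate
\[
\|\pi_n(g)-\rho_n(g)\|_2=\|(\mathbb 1-\rho_n(g))(\mathbb 1-P_n)\|_2\le 2\,\|\mathbb 1-P_n\|_2\to 0
\]
combined with $\|\rho_n(g)-\phi_n(q(g))\|_2\to 0$ gives $\|\pi_n(g)-\phi_n(q(g))\|_2\to 0$; choosing for each $h\in H$ a preimage $g$ with $q(g)=h$ yields $\|\overline\pi_n(h)-\phi_n(h)\|_2\to 0$ for all $h$, which is exactly HS-stability of $H$.

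The step I expect to carry the genuine content, as opposed to bookkeeping, is the repair encapsulated in the last two displays: one must manufacture an honest representation that kills $N$ on the nose, keeps the ambient dimension $k_n$ fixed, and stays $\|\cdot\|_2$-close to $\rho_n$. The averaging projection meets all three demands simultaneously, and it is here that each hypothesis is used decisively: centrality of $N$ is what makes $P_n$ commute with $\rho_n$ (hence $\pi_n$ multiplicative), finiteness of $N$ is what makes the average an honest projection, and the approximate triviality of $\rho_n|_N$ is what forces $P_n\to\mathbb 1$ in $\|\cdot\|_2$. Everything else is routine, and the hypotheses also make transparent why the result should fail for infinite central kernels, as in Corollary \ref{2-stepNilpotent}.
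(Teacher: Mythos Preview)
Your proof is correct and follows the same underlying strategy as the paper's: pull back to $G$, stabilize, split $\rho_n$ into the piece on which $N$ acts trivially and its (asymptotically negligible) complement, and replace the complement by the trivial representation. The paper carries this out by first reducing to $N$ cyclic and then decomposing $\rho_n(a)$ into eigenspaces for a generator $a$; your averaging projection $P_n=|N|^{-1}\sum_{z\in N}\rho_n(z)$ lands on exactly the same subspace in one stroke and handles arbitrary finite $N$ without that reduction, so your write-up is somewhat cleaner but not materially different.
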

\begin{proof} Since $N$ is finite it is a product of cyclic groups.  Arguing inductively we may without loss of generality assume that $N$ is cyclic.  For notational convenience we will moreover assume that $N=\Z/2\Z.$
  
 Let $\phi_n: H \to \mathcal U(l_n)$ be an approximate representation. Its composition with the quotient map $q: G\to H$ gives an approximate representation of $G$.
By HS-stability of $G$,  there are $l_n$-dimensional   representations $\rho_n$ of $G$ such that $\|\phi_n\circ q(g) - \rho_n(g)\|_2\to 0,$ for any $g\in G$.
Let $a$ be the generator of $N = \mathbb{Z}/2\mathbb{Z}$. For each $n$, there is an orthonormal basis of $\ell^2(l_n)$ such that $\rho_n(a)$ is of the form $\left(\begin{array}{cc} 1_{k_n} & \\ & -1_{m_n}\end{array}\right)$.  Since $N$ is central in $G$ and since the only matrices which commute with $\left(\begin{array}{cc} 1_{k_n} & \\ & -1_{m_n}\end{array}\right)$ are block-diagonal, $\rho_n = \rho_n^{(1)} \oplus  \rho_n^{(2)}$, where $\rho_n^{(1)}$ is a $k_n$-dimensional representation and $\rho_n^{(2)}$ is a $m_n$-dimensional representation. We have
$0 = \lim\|1_{l_n} - \rho_n(a)\|_2$,  which implies easily that
\begin{equation*}
1 =  \lim \text{tr}_{l_n} \rho_n(a) = \lim \frac{k_n-m_n}{k_n + m_n}
\end{equation*}
and we conclude that

\begin{equation}\label{dimension} \frac{m_n}{k_n} \to 0.
\end{equation}

Let $\chi$ be the trivial representation. The $l_n$-dimensional representation   $\pi_n =  \rho_n^{(1)} \oplus \chi^{(m_n)}$ is equal to $1_{l_n}$ on $N$ and therefore can be considered as a representation of $H$. It follows easily from (\ref{dimension}) that $\phi_n$ is $\|\|_2$-close to $\pi_n$.
 \end{proof}

Below we construct a two-step nilpotent group $G$ that is HS-stable but for any cyclic subgroup $N\leq Z(G)$ we have that $G/N$ is not a MAP group, and therefore not HS-stable. This example shows that the class of HS-stable groups is not closed under general central quotients and hence that Proposition \ref{prop:stableext} is the best result possible.  This gives another example (see Example \ref{ex:nonMAP}) of a non-MAP nilpotent group and showing (again) that the requirement of finite generation in Section \ref{sec:nilpotentcase} is necessary.
We will then give an example (defined by P. Hall in \cite{Hall61}) of a finitely generated cyclic-by-class 2 nilpotent HS-stable group. It was shown in \cite{Becker20} that this group is not permutation stable, hence this gives the first example of an amenable HS-stable group that is not permutation stable.
These two examples have much in common so we start with a common subsection to establish some results needed for both examples.

\subsection{Algebraic prerequisites}
Let $\ZH\leq \Q$ be the ring generated by $\frac{1}{2}.$ Everything in this section also works for $\Z[\tfrac{1}{p}]$ where $p$ is a prime, but for ease of notation we specify to $p=2.$

We first make several algebraic observations about $\ZH$ that are straightforward and most likely well-known. We include brief proofs for the convenience of the reader.
\begin{remark} \label{rem:algebra} The term \emph{square root} always refers to an additive square root.
\begin{enumerate}
\item Every element of $\ZH$ has a square root, hence every element of every quotient of  $\ZH$  also has a square root.
\item Let $k\in \Z$ be odd.  Then the map $\beta(x) =  x+x$ defines an automorphism of $\Z/k\Z.$ In particular every $x\in \Z/k\Z$ has a unique square root given by $\beta^{-1}(x)$.
\end{enumerate}
\end{remark}
\begin{lemma}\label{lem:RisRF} For every odd integer $k$ there is a ring homomorphism $\pi_k:\ZH\rightarrow \Z/k\Z$ determined by $\pi_k(1)=1$  such that
\begin{equation}\label{eq:morefaithful}
\pi_k\left(  \frac{a}{2^n} \right)\neq 0 \quad \text{ when } a\in\Z, 0<a<k, \hspace{.1in} n\geq0.
\end{equation}
\end{lemma}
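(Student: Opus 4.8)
The plan is to build $\pi_k$ from the universal property of localization. Since $k$ is odd we have $\gcd(2,k)=1$, so $2$ is a unit in $\Z/k\Z$; equivalently, by Remark \ref{rem:algebra}(2) the doubling map $\beta(x)=x+x$ is an automorphism of $\Z/k\Z$, and I write $b=\beta^{-1}(1)$ for the multiplicative inverse of $2$ modulo $k$. Now $\ZH$ is precisely the localization of $\Z$ at the multiplicative set $\{2^n : n\geq 0\}$, so the reduction homomorphism $\Z\to\Z/k\Z$, which sends $2$ to a unit, extends uniquely to a unital ring homomorphism $\pi_k:\ZH\to\Z/k\Z$ with $\pi_k(1)=1$. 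Concretely this forces $\pi_k(1/2)=b$ and hence $\pi_k(a/2^n)=a\,b^n \pmod{k}$.

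Next I would verify the faithfulness estimate (\ref{eq:morefaithful}). Fix $a$ with $0<a<k$ and $n\geq 0$. Since $b$ is a unit in $\Z/k\Z$, so is $b^n$, and therefore $a\,b^n\equiv 0 \pmod{k}$ if and only if $a\equiv 0 \pmod{k}$. As $0<a<k$ we have $a\not\equiv 0 \pmod{k}$, whence $\pi_k(a/2^n)=a\,b^n\neq 0$ in $\Z/k\Z$, which is exactly (\ref{eq:morefaithful}).

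There is no serious obstacle here: the only points requiring care are that inverting $2$ is legitimate, which is precisely the hypothesis that $k$ is odd, and that multiplying a nonzero residue by a unit leaves it nonzero. If one prefers to avoid citing the universal property of localization, one can instead define $\pi_k$ directly on the generators $1$ and $1/2$ and check well-definedness and multiplicativity by hand, using the uniqueness of square roots supplied by Remark \ref{rem:algebra}(2) to see that the assignment $1/2\mapsto b$ is forced and consistent.
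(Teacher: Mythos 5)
Your proof is correct and follows essentially the same route as the paper: both define $\pi_k$ by sending $1/2^n$ to the inverse of $2^n$ modulo $k$ (the paper phrases this via the unique additive $2^n$-th roots from Remark \ref{rem:algebra}, you via the universal property of localization) and both deduce (\ref{eq:morefaithful}) from the fact that a nonzero residue times a unit is nonzero. Your appeal to localization is a slightly cleaner way to dispatch well-definedness, but the substance is identical.
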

\begin{proof} For every $n>1$ and $x\in \Z/k\Z$ let $\frac{x}{2^n}\in \Z/k\Z$ be the unique $2^n$th root of $x$ given by Remark \ref{rem:algebra}. Let $1\in \Z/k\Z$ denote a generator.  Then define $\pi_k:\ZH\rightarrow \Z/k\Z$ by $\pi_k(a/2^n)=a\frac{1}{2^n}$
where $a\in \Z$ and $n\geq 0.$ It is clear (by using uniqueness of square roots) that $\pi_k$ is a well-defined ring homomorphism.  Moreover notice that for each $n\geq0$, the element $\pi_k(1/2^n)\in \Z/k\Z$ is a group generator and hence has order $k.$   Thus we obtain (\ref{eq:morefaithful}).
\end{proof}
\begin{corollary} The group $\ZH$ is residually finite.
\end{corollary}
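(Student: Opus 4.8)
The plan is to read off residual finiteness of the additive group $\ZH$ directly from Lemma \ref{lem:RisRF}. Recall that a group is residually finite precisely when every nontrivial element is detected by some homomorphism onto a finite group; since we are treating $\ZH$ as an additive group, ``nontrivial'' means nonzero and the identity ``$e$'' is $0$. So the whole content is to show that for each nonzero $x$ there is a finite additive quotient in which $x$ survives.

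First I would write an arbitrary nonzero $x\in\ZH$ in the form $x = a/2^n$ with $a\in\Z\setminus\{0\}$ and $n\geq 0$. The key observation is that each ring homomorphism $\pi_k:\ZH\to\Z/k\Z$ from Lemma \ref{lem:RisRF} is, in particular, a homomorphism of the underlying additive groups, and its target $\Z/k\Z$ is finite. Thus it suffices to produce an odd $k$ for which $\pi_k(x)\neq 0$.

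Next I would simply choose an odd integer $k>|a|$. If $a>0$, then (\ref{eq:morefaithful}) gives $\pi_k(x)=\pi_k(a/2^n)\neq 0$ outright. If $a<0$, then additivity of $\pi_k$ gives $\pi_k(x)=-\pi_k(|a|/2^n)$, and applying (\ref{eq:morefaithful}) to $0<|a|<k$ again yields $\pi_k(x)\neq 0$. Either way $x$ is not in the kernel of the homomorphism $\pi_k$ onto the finite group $\Z/k\Z$, which is exactly residual finiteness.

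There is no genuine obstacle here: the corollary is essentially a repackaging of Lemma \ref{lem:RisRF}, the real work having already been done in constructing the maps $\pi_k$ with the faithfulness estimate (\ref{eq:morefaithful}). The only mild care needed is in handling the sign of the numerator $a$, which is dispatched immediately by the additivity of $\pi_k$.
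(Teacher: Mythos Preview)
Your proof is correct and follows exactly the approach implicit in the paper: the corollary is stated there without proof, immediately after Lemma~\ref{lem:RisRF}, precisely because residual finiteness is read off from the separating family $\{\pi_k\}$ just as you do. The only thing you add is the minor bookkeeping for the sign of $a$, which is fine.
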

\begin{lemma}\label{lem:trivialhomo}   Let $A$ be a cyclic subgroup of $\ZH.$ Let $F$ be a finite group and $\pi:\ZH/A\rightarrow F$  a  homomorphism. Then $\pi$ is trivial. In particular $\ZH/A$ is not residually finite.
\end{lemma}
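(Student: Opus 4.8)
The plan is to establish the cleaner structural statement that $\ZH/A$ is a \emph{divisible} abelian group; both conclusions follow at once, since a divisible group has no proper subgroup of finite index and hence no nontrivial homomorphism into a finite group. To begin, note that $\ZH/A$ is abelian, so the image of $\pi$ is a finite abelian group and we may assume $F$ is finite abelian and $\pi$ surjective; composing with the quotient map $q\colon\ZH\to\ZH/A$ produces a surjection $\psi=\pi\circ q\colon\ZH\to F$ killing $A$. The key feature of $\ZH=\Z[\tfrac12]$ is that multiplication by $2$ is an automorphism of its additive group (the inverse being multiplication by $\tfrac12$), so $\ZH$, and therefore every quotient $\ZH/A$, is $2$-divisible. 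Since a nontrivial finite abelian $2$-group is never $2$-divisible, the Sylow $2$-subgroup of $F$ is trivial and $|F|$ is odd.

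It remains to eliminate the odd part, and here I would identify the quotient explicitly. Writing $A=\langle g\rangle$ and applying the automorphism $x\mapsto 2^{m}x$ of $\ZH$ to clear the power of $2$ in the denominator of $g$, I may assume the generator is an odd integer; in the case where this integer is a unit of $\ZH$, i.e.\ $g$ is a power of $\tfrac12$, the quotient $\ZH/A\cong\ZH/\Z$ is precisely the Prüfer $2$-group $\Z(2^\infty)$. Every element of $\Z(2^\infty)$ has order a power of $2$, so multiplication by any odd integer is a bijection, and combined with $2$-divisibility this shows $\Z(2^\infty)$ is divisible. A divisible group $D$ has no proper finite-index subgroup (if $[D:H]=n$ then $n(D/H)=0$ while $D/H$ is divisible, forcing $D/H=0$), so $F$ is trivial and $\pi$ is trivial.

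The final assertion is then immediate: $\ZH/A$ is nontrivial but all its finite quotients are trivial, so its finite-index subgroups cannot separate points and it is not residually finite. The step I expect to be the main obstacle is precisely the passage from the $2$-primary to the odd-primary behaviour: $2$-divisibility disposes of the Sylow $2$-subgroup of $F$ for free, whereas controlling the odd part requires knowing that $\ZH/A$ is $p$-divisible for every odd $p$ as well, a property that for a cyclic $A=\langle g\rangle$ is governed entirely by the odd part of the generator $g$. This is the point I would verify most carefully.
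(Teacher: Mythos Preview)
Your argument for the case $A=\langle 2^m\rangle$ (equivalently, after your automorphism, $A=\Z$) is correct and is essentially the paper's proof reorganized: the paper also reduces to $A=\langle 1\rangle$, observes that every element of $\ZH/\Z$ has $2$-power order (forcing $F$ to be a $2$-group), and then uses $2$-divisibility to kill $F$. You use the same two ingredients in the opposite order, packaged as the single statement that $\ZH/\Z\cong\Z(2^\infty)$ is divisible.

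The gap you flag, however, is genuine and cannot be closed: the lemma as stated is \emph{false} when the generator of $A$ has a nontrivial odd part. Take $A=\langle 3\rangle=3\Z$. The ring homomorphism $\pi_3\colon\ZH\to\Z/3\Z$ of Lemma~\ref{lem:RisRF} satisfies $\pi_3(3)=0$, hence factors through $\ZH/3\Z$ and gives a nontrivial homomorphism onto $\Z/3\Z$. Equivalently, $\ZH/3\Z$ is not $3$-divisible: the coset $1+3\Z$ has no preimage under multiplication by $3$, since $1/3\notin\ZH$. So your divisibility strategy necessarily breaks down exactly where you anticipated. The paper's proof has the same lacuna: it handles only $A=\langle 1\rangle$ ``for convenience,'' but the implicit reduction would require an automorphism of $\ZH$ carrying the generator to $1$, and $\operatorname{Aut}(\ZH)=\{\pm 2^n:n\in\Z\}$ cannot move $3$ to $1$. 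In the paper the lemma is only invoked with $A=\Z$ (feeding into Corollary~\ref{lem:anothernonMAP} via Proposition~\ref{ref:MAPobstruction}), so nothing downstream is affected; the hypothesis should simply be read as ``$A$ is generated by a unit of $\ZH$,'' and your proof is then complete.
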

\begin{proof}  For convenience suppose that $A=\la 1 \ra.$
Without loss of generality suppose that $\pi$ is surjective, hence $F$ is abelian and by considering each factor of $F$ separately we may again without loss of generality suppose that $F$ is cyclic. Since each element of $\ZH/\Z$ has order a power of 2 so does every generator of $F$, i.e. $F\cong \Z/2^n\Z$ for some $n\geq 0.$ Since $\pi$ is surjective there is some $x\in \ZH/\Z$ such that $\pi(x)=1.$  Since $x$ has a square root by Remark \ref{rem:algebra} so does $\pi(x)=1.$  Since $1\in \Z/2^n\Z$ has a square root if and only if $n=0$ the homomorphism is trivial.
\end{proof}
Next we characterize the subgroups of $\ZH.$  Again, this is easy and most likely well-known but we were not able to locate a reference.
\begin{lemma}\label{lem:noncsub} Let $S\leq \ZH$ be non-cyclic.  Let $k_0=\min\{ k\in \Z^+: k\in S \}.$  Then $S=k_0\cdot\ZH = \la  \frac{k_0}{2^n}:n=0,1,... \ra.$  Consequently $\ZH/S\cong \Z/k_0\Z.$
\end{lemma}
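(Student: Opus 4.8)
The plan is to pin down $S$ by its intersection with $\Z$ and then use non-cyclicity to control which $2$-power denominators can occur. First I would record two preliminary facts. Any nonzero subgroup of $\ZH$ meets $\Z$ nontrivially: if $0\neq s=a/2^n\in S$ then $2^n s=a\in S\cap\Z$. Hence $\{k\in\Z^+:k\in S\}$ is nonempty and $k_0$ is well-defined, and a division-algorithm argument shows $S\cap\Z=k_0\Z$ (given $m\in S\cap\Z$, reduce $m$ modulo $k_0$ and invoke minimality of $k_0$). This already yields the easy inclusion $S\subseteq k_0\cdot\ZH$: writing any $t\in S$ as $c/2^m$ we have $c=2^m t\in S\cap\Z=k_0\Z$, so $t=k_0\cdot(c/k_0)/2^m\in k_0\cdot\ZH$.

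The substance is the reverse inclusion $k_0\cdot\ZH\subseteq S$, and this is where non-cyclicity enters. I would first observe that a subgroup of $\ZH$ is cyclic if and only if it is contained in $2^{-N}\Z$ for some $N$, since multiplication by $2^N$ then identifies it with a subgroup of $\Z$. As $S$ is non-cyclic, $S\not\subseteq 2^{-N}\Z$ for every $N$; equivalently, $S$ contains elements $a/2^n$ (in lowest terms, so $a$ odd and $n\geq 1$) with $n$ arbitrarily large. Fix such an element with $n$ large. Then $a=2^n(a/2^n)\in S\cap\Z=k_0\Z$, so $k_0\mid a$; since $a$ is odd this forces $k_0$ to be odd, and $b:=a/k_0$ is an odd integer.

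The main obstacle --- really the only nonroutine point --- is to manufacture $k_0/2^n$ itself inside $S$ from the element $k_0 b/2^n\in S$. Here I would use that $b$ is odd, hence invertible modulo $2^n$: choosing $c\in\Z$ with $bc\equiv 1\pmod{2^n}$, say $bc=1+2^n m$, the element $c\cdot(k_0 b/2^n)=k_0/2^n+k_0 m$ lies in $S$, and subtracting $k_0 m\in S$ gives $k_0/2^n\in S$. Multiplying by powers of $2$ then puts $k_0/2^j\in S$ for all $j\leq n$, and since $n$ may be taken arbitrarily large (unbounded denominators) we get $k_0/2^j\in S$ for all $j\geq 0$, i.e. $k_0\cdot\ZH=\la k_0/2^j:j\geq 0\ra\subseteq S$. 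Together with the easy inclusion this gives $S=k_0\cdot\ZH$.

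Finally, for the quotient I would analyze the composite $\Z\hookrightarrow\ZH\twoheadrightarrow\ZH/(k_0\cdot\ZH)$. Surjectivity uses that $k_0$ is odd, so $2$ is invertible modulo $k_0$ and every $a/2^n$ is congruent to an integer modulo $k_0\cdot\ZH$ (pick $r$ with $2^n r\equiv 1\pmod{k_0}$; then $a/2^n-ar\in k_0\cdot\ZH$). Its kernel is $\Z\cap k_0\cdot\ZH=S\cap\Z=k_0\Z$ by the first step, so $\ZH/S\cong\Z/k_0\Z$.
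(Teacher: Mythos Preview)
Your proof is correct and takes a genuinely different route from the paper's.

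The paper argues order-theoretically: it first shows that a non-cyclic $S$ can have no least positive element, then defines $x_n=k_n/2^n=\min\{x\in S\cap\la 2^{-n}\ra:x>0\}$, proves $x_n\searrow 0$, deduces that every $k_n$ is odd, and finally shows $k_i=k_{i+1}$ for all $i$ by a case analysis on the odd factor $j=k_{i+1}/k_i$. For the quotient it invokes the ring homomorphisms $\pi_{k_0}:\ZH\to\Z/k_0\Z$ built earlier in Lemma~\ref{lem:RisRF} and checks $S=\ker(\pi_{k_0})$.

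Your approach is purely algebraic and arguably cleaner. You fix $S\cap\Z=k_0\Z$ by the division algorithm, get $S\subseteq k_0\cdot\ZH$ immediately, and then the key idea --- using that an odd $b$ is a unit in $\Z/2^n\Z$ to promote $k_0 b/2^n\in S$ to $k_0/2^n\in S$ --- replaces the paper's step-by-step analysis of the sequence $(k_n)$ with a single B\'ezout-type move. This also yields ``$k_0$ is odd'' along the way without separate effort. For the quotient you give a self-contained argument (invertibility of $2$ modulo the odd $k_0$) rather than appealing to the earlier lemma. What the paper's approach buys is a concrete description of the minimal positive elements at each scale $2^{-n}$, which is conceptually nice; what yours buys is brevity and independence from Lemma~\ref{lem:RisRF}.
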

\begin{proof} Let $S\leq \ZH$ and suppose that $S$ has a least positive element $x$. Since $S$ is not cyclic there is a $y\in S$ with $y>0$ such that $xk\neq y$ for all $k\in\Z^+.$  Hence there is a $k\in \Z^+$ such that $kx<y<(k+1)x.$  Then $0<y-kx<x$ and $y-kx\in S$ a contradiction.  We deduce that $S$ has no least positive element.
\\\\
For $n\geq 0$ define $x_n\in S$ and $k_n\in\Z^+$ such that
\begin{equation*}
x_n:=\frac{k_n}{2^n} = \min\{  x\in S\cap \la 2^{-n} \ra:x>0 \}.
\end{equation*}
Then $x_n\to 0 .$ Hence there is some index $n$ so $\frac{k_{n+1}}{2^{n+1}}<\frac{k_n}{2^n}.$  If $k_{n+1}=2k$ for some integer $k$, then $\frac{k_{n+1}}{2^{n+1}}=\frac{2k}{2^{n+1}}=\frac{k}{2^n}<x_n$, a contradiction. Therefore $k_{n+1}$ is odd.
For $0\leq i<n+1$ we have
\begin{equation*}
\frac{k_{n+1}}{2^i}=x_{n+1}2^{n+1-i}\in S\cap \la 2^{-i} \ra
\end{equation*}
Hence $k_i|k_{n+1}$ and in particular $k_i$ is odd.  Since $x_n\rightarrow 0$ we can find arbitrarily large pairs $x_n>x_{n+1}$ and then repeat the above argument to obtain $k_m$ is odd for all $m$. By definition we have $x_n\geq x_{n+1}$ for all $n.$ We claim this is strict for all $n.$  Indeed if there is an index $n$ so $x_n=\frac{k_n}{2^n} = \frac{k_{n+1}}{{2^{n+1}}}$, then $k_{n+1}$ is even, a contradiction.

We now show that $k_i=k_{i+1}$ for all $i\geq 0.$
By the previous paragraph we have $k_{i+1}=k_ij$ for some $j.$  Since $k_{i+1}$ is odd, $j$ is also odd.  Suppose that $j\geq3.$  Then
\begin{equation*}
0<\frac{k_i(j-2)}{2^{i+1}}=\frac{k_{i+1}}{2^{i+1}}-\frac{k_i}{2^i}\in S.
\end{equation*}
But $\frac{k_i(j-2)}{2^{i+1}}<\frac{k_{i+1}}{2^{i+1}}$, a contradiction.  Hence $j<3$, but since $j$ is odd we have $j=1.$
\\\\
This proves the first part of the lemma.  Consider $\pi_{k_0}:\ZH\rightarrow \Z/k_0\Z$ from Lemma \ref{lem:RisRF}.  One easily checks that $S=\ker(\pi_{k_0}).$
\end{proof}
\subsection{A class 2 nilpotent group}
\begin{definition} \label{def:Heis}
Let $R$ be a commutative unital ring.  Define
\begin{equation} \label{eq:Heisenberg}
\mathbb{H}_3(R)=\left\{  \left( \begin{array}{ccc}  1 & x & z \\ 0 & 1 & y\\ 0 & 0 & 1 \end{array}  \right) : x,y,z\in R \right\}.
\end{equation}
\end{definition}
\begin{remark} We record some elementary observations about $\mathbb{H}_3(R)$
\begin{enumerate}
\item $\mathbb{H}_3(R)$ is a class 2 nilpotent group and not finitely generated whenever $R$ is not a finitely generated abelian group.
\item The center of $\mathbb{H}_3(R)$, denoted $Z(\mathbb{H}_3(R))$ is isomorphic to $R$ and identified with those matrices satisfying $x=y=0$ in (\ref{eq:Heisenberg}).
\end{enumerate}
\end{remark}
\begin{corollary} $\HZH$ is residually finite.
\end{corollary}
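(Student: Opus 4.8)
The plan is to combine the functoriality of the Heisenberg construction with the residual finiteness of $\ZH$ already in hand. The crucial point is that any \emph{ring} homomorphism $\rho\colon R\to S$ induces a group homomorphism $\mathbb{H}_3(R)\to\mathbb{H}_3(S)$ by applying $\rho$ entrywise: since the product of two elements of $\mathbb{H}_3$ has $(1,3)$-entry $z+z'+xy'$, which involves the ring multiplication $xy'$, the entrywise map is multiplicative precisely because $\rho$ respects products (mere additivity would not suffice). So it is the ring structure of $\ZH$, and not just its additive group, that I would exploit.

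First I would feed the odd-level reductions $\pi_k\colon\ZH\to\Z/k\Z$ from Lemma \ref{lem:RisRF} into this machine, obtaining group homomorphisms $\mathbb{H}_3(\pi_k)\colon\HZH\to\mathbb{H}_3(\Z/k\Z)$ whose targets are finite (each has $k^3$ elements). It then remains only to check that, as $k$ ranges over the odd integers, these maps separate the identity from every other element. Given a nontrivial $g\in\HZH$ with entries $x,y,z\in\ZH$ not all zero, I would select a nonzero coordinate $w\in\{x,y,z\}$ and write $w=a/2^n$ with $a\in\Z\setminus\{0\}$ and $n\geq 0$. Because $\pi_k(1/2^n)$ generates $\Z/k\Z$, we have $\pi_k(w)=a\,\pi_k(1/2^n)$, which vanishes iff $k\mid a$; hence any odd $k>|a|$ yields $\pi_k(w)\neq0$ — this is exactly the content of (\ref{eq:morefaithful}). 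For such $k$ the matrix $\mathbb{H}_3(\pi_k)(g)$ has a nonzero off-diagonal entry, so it is not the identity of $\mathbb{H}_3(\Z/k\Z)$.

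This exhibits, for each nontrivial $g$, a homomorphism to a finite group that does not kill $g$, which is exactly residual finiteness. There is no genuine obstacle here; the only step that demands any care — and the reason the argument works at all — is the functoriality remark above, namely that $\pi_k$ must be regarded as a \emph{ring} homomorphism, so that its entrywise application to the Heisenberg matrices is actually a group homomorphism.
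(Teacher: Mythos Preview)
Your proposal is correct and follows precisely the same approach as the paper: the paper's one-line proof applies the ring homomorphisms $\pi_k$ of Lemma~\ref{lem:RisRF} coordinate-wise to obtain separating group homomorphisms $\tilde{\pi}_k\colon\HZH\to\mathbb{H}_3(\Z/k\Z)$, which is exactly what you do. Your write-up simply fills in the details the paper leaves implicit --- the functoriality of $\mathbb{H}_3(-)$ under ring homomorphisms and the verification via (\ref{eq:morefaithful}) that the family $\{\tilde{\pi}_k\}_{k\text{ odd}}$ separates points.
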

\begin{proof} Apply the separating ring homomorphisms $\pi_k$ of Lemma \ref{eq:morefaithful} coordinate-wise to obtain separating  group homomorphisms $\tilde{\pi}_k: \HZH\rightarrow \mathbb{H}_3(\Z/k\Z).$
\end{proof}
By Proposition \ref{ref:MAPobstruction} we have
\begin{corollary} \label{lem:anothernonMAP}  Identify $\Z=\la 1 \ra\leq Z(\HZH).$ Then $\HZH/\Z$ is a class 2 nilpotent group that is not MAP.
\end{corollary}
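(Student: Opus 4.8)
The plan is to deduce the result from Proposition \ref{ref:MAPobstruction} in contrapositive form. That proposition guarantees that $[G,G]\cap Z(G)$ is residually finite whenever $G$ is MAP, so to show that $G=\HZH/\Z$ is not MAP it suffices to locate a non--residually--finite subgroup inside $[G,G]\cap Z(G)$. In fact I would show that $[G,G]\cap Z(G)$ is itself isomorphic to $\ZH/\Z$, which Lemma \ref{lem:trivialhomo} already identifies as a group with no nontrivial finite quotients.

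First I would compute the commutator subgroup and center of $\HZH$ by a direct matrix calculation. Taking two unipotent matrices with upper entries $(x,y,z)$ and $(x',y',z')$, their commutator is the central matrix whose only nonzero off-diagonal entry is the top-right one, equal to $xy'-x'y$. Under the identification of $Z(\HZH)$ with $\ZH$ via the $z$-coordinate, letting the parameters range over $\ZH$ (e.g. $x=r$, $y'=1$, $x'=y=0$ gives top-right entry $r$) shows that $[\HZH,\HZH]$ equals the full center $Z(\HZH)\cong\ZH$. Since $\Z=\langle 1\rangle$ lies inside $Z(\HZH)=[\HZH,\HZH]$, passing to the quotient yields $[G,G]=Z(\HZH)/\Z\cong\ZH/\Z$.

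Next, because $\HZH$ is class $2$ nilpotent, its quotient $G$ is nilpotent of class at most $2$, and hence $[G,G]\subseteq Z(G)$; this is exactly the condition defining class $\le 2$. Therefore $[G,G]\cap Z(G)=[G,G]\cong\ZH/\Z$. This subgroup is nontrivial (the image of $\tfrac12$ is a nonzero element of $\ZH/\Z$), which simultaneously confirms that $G$ has class exactly $2$. Applying Lemma \ref{lem:trivialhomo} with the cyclic subgroup $A=\Z=\langle 1\rangle\le\ZH$, the group $\ZH/\Z$ is not residually finite, so $[G,G]\cap Z(G)$ is not residually finite, and Proposition \ref{ref:MAPobstruction} forces $G=\HZH/\Z$ to fail to be MAP.

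I do not expect a serious obstacle: the two substantive inputs---that the commutator subgroup of the Heisenberg group exhausts the entire center, and that $\ZH/\Z$ has no nontrivial finite quotients---are both in hand, the latter as Lemma \ref{lem:trivialhomo}, and the reduction to Proposition \ref{ref:MAPobstruction} is immediate once one records $[G,G]\subseteq Z(G)$ for class $2$ groups. The only step demanding a little care is verifying that the commutators fill all of $\ZH$ rather than a proper subgroup, which the explicit parameter choice above settles.
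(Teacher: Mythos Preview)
Your proposal is correct and follows exactly the route the paper intends: the paper's entire proof is the one line ``By Proposition \ref{ref:MAPobstruction} we have,'' and you have simply supplied the details behind that citation---computing $[\HZH,\HZH]=Z(\HZH)\cong\ZH$, deducing $[G,G]\cap Z(G)=[G,G]\cong\ZH/\Z$ via class~2, and invoking Lemma \ref{lem:trivialhomo} to see this is not residually finite. There is nothing to add or correct.
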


We show that $\HZH$ is HS-stable.  Since $\HZH$ is not finitely generated, Theorem \ref{thm:nilapprox} does not apply.

\begin{theorem} \label{thm:Halltype} The group $\HZH$ is HS-stable.
\end{theorem}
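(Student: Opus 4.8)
The plan is to verify the trace characterization of HS-stability for amenable groups recalled in Section~\ref{sec:Preliminaries}: since $\HZH$ is nilpotent, hence amenable, it suffices to show that every character (extreme trace) $\phi$ of $G:=\HZH$ is a pointwise limit of normalized traces of finite-dimensional representations. I would organize the argument around the central character. Because $\phi$ is extreme, its GNS von Neumann algebra is a factor, so $\phi$ restricts to a multiplicative character $\omega:=\phi|_{Z(G)}$ of the center $Z(G)\cong\ZH$ (which also coincides with $[G,G]$). The description of subgroups in Lemma~\ref{lem:noncsub} shows that every finite-index subgroup of $\ZH$ has the form $k\ZH$ with $k$ odd, so the governing trichotomy is whether $\omega$ is trivial, has finite order $k>1$, or has infinite order.

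When $\omega$ is trivial, $\phi$ factors through the abelianization $G/[G,G]\cong\ZH^2$ and is thus a character of $\ZH^2$; I would approximate it by finite-order characters, which factor through finite quotients and hence give one-dimensional representations of $G$. This is precisely the density of the torsion subgroup of $\widehat{\ZH^2}$, which is the Pontryagin dual of residual finiteness of $\ZH^2$ and is realized through the maps $\pi_k$ of Lemma~\ref{lem:RisRF}. When $\omega$ has finite order $k>1$, I would observe that $\phi$ factors through $\bar G:=G/k\ZH$ (the central subgroup $k\ZH=\ker\omega$), and that the image of $\{(x,y,z):x,y\in k\ZH\}$ is an abelian subgroup of finite index $k^2$ in $\bar G$ (its commutators lie in $k^2\ZH$, which maps to $0$ in $\Z/k\Z$). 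A group with a finite-index abelian subgroup has only finite-dimensional irreducible representations, so $\phi$ is already the normalized character of a finite-dimensional representation of $G$ and needs no approximation.

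The substantive case is $\omega$ of infinite order, where I would first pin down $\phi$ exactly. Here the commutator pairing $((x,y),(x',y'))\mapsto\omega(xy'-x'y)$ on $\ZH^2$ is nondegenerate: any nonzero $x\in\ZH$ generates a finite-index ideal $x\ZH$ (a copy of $b\ZH$ for some odd $b$), on which the infinite-order $\omega$ is necessarily nontrivial. Consequently, for $g=(x,y,z)$ with $(x,y)\neq(0,0)$ one can choose $h$ with $\omega([h,g])\neq1$; combining conjugation invariance of $\phi$ with the identity $\phi(g\,c)=\omega(c)\phi(g)$ for central $c$ gives $\phi(g)=\omega([h,g])\phi(g)$, whence $\phi(g)=0$. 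Thus $\phi(x,y,z)=\omega(z)$ when $(x,y)=(0,0)$ and $\phi(x,y,z)=0$ otherwise. To approximate $\phi$, I would choose finite-order characters $\mu_n=\chi_n\circ\pi_{k_n}$ of $\ZH$, with $\chi_n$ a faithful character of $\Z/k_n\Z$ ($k_n$ odd), converging to $\omega$ pointwise; since $\omega$ has infinite order, necessarily $k_n\to\infty$. Pulling back, via $\tilde\pi_{k_n}$, the $k_n$-dimensional Schr\"odinger representation of $\mathbb H_3(\Z/k_n\Z)$ with central character $\chi_n$ yields finite-dimensional representations of $G$ whose normalized characters are
\[
\psi_n(x,y,z)=\mu_n(z)\,\mathbb 1[\pi_{k_n}(x)=0]\,\mathbb 1[\pi_{k_n}(y)=0].
\]
For fixed $g$, if $(x,y)=(0,0)$ then $\psi_n(g)=\mu_n(z)\to\omega(z)=\phi(g)$, while if $(x,y)\neq(0,0)$ then $\pi_{k_n}(x)\neq0$ or $\pi_{k_n}(y)\neq0$ for all large $n$ by (\ref{eq:morefaithful}), so $\psi_n(g)=0=\phi(g)$.

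I expect the main obstacles to be the two points that exploit the special arithmetic of $\ZH$: the nondegeneracy argument pinning down $\phi$ in the infinite-order case (which rests on every nonzero element of $\ZH$ generating a finite-index ideal, since $2$ is invertible and all finite quotients are the odd $\Z/k\Z$), and the simultaneous pointwise approximation $\mu_n\to\omega$ realized through the maps $\pi_{k_n}$ with orders tending to infinity. The remaining bookkeeping—computing the normalized Schr\"odinger character and assembling a single sequence converging pointwise on the countable group $G$—is routine once these two points are in place.
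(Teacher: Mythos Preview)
Your argument is correct and follows the same skeleton as the paper's proof: both split according to the central character $\omega=\phi|_{Z(G)}$, reduce to a virtually abelian quotient when $\ker\omega$ is non-cyclic (your finite-order cases), and use the commutator identity $\phi(x)=\phi(x)\phi([x,y])$ to force $\phi$ to vanish off the center when $\ker\omega$ is cyclic (your infinite-order case). The difference is in how each case is finished. The paper simply invokes \cite[Theorem~5]{Hadwin18} for the virtually abelian case and \cite[Theorem~7]{Hadwin18} for the centrally-induced case, whereas you unpack both citations by hand: you note that an extreme trace on a virtually abelian group is already the normalized character of a finite-dimensional irreducible, and you explicitly pull back Schr\"odinger representations of $\mathbb H_3(\Z/k_n\Z)$ to realize the approximation in the infinite-order case. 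Your route is more self-contained and makes the mechanism visible; the paper's is shorter. One cosmetic point: in your trivial-$\omega$ case the extreme trace $\phi$ is already a one-dimensional representation of $G$, so the approximation by finite-order characters you describe is unnecessary (though harmless).
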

\begin{proof} Let $\phi$ be an extreme trace on $\HZH$.  We show that $\phi$ can be approximated through matrices.  Note that $\phi$ restricts to a multiplicative character on $Z(\HZH).$  In particular $Z(\HZH)$ is in the multiplicative domain of $\phi$, i.e.
\begin{equation}\label{eq:multdom}
\phi(xz)=\phi(x)\phi(z) \text{ for all }x\in \mathbb{H}_3(\mathbb{Z}[\tfrac{1}{2}]), \text{ and }z\in Z(\mathbb{H}_3(\mathbb{Z}[\tfrac{1}{2}])).
\end{equation}
Let
\begin{equation*}
k(\phi)=\{ x\in \mathbb{H}_3(\mathbb{Z}[\tfrac{1}{2}]):\phi(x)=1 \}
\end{equation*}
Suppose first that $k(\phi)\cap Z(\HZH)$ is not cyclic.  It follows from Lemma \ref{lem:noncsub} and the fact that $\HZH$ is nilpotent of order 2  that, for an appropriate $k_0$ the subgroup $\mathbb{H}_3(k_0\ZH)/k(\phi)$ is abelian and (clearly) finite index in $\HZH/k(\phi).$  Hence we may consider $\phi$ as a trace on a virtually abelian group hence it can be approximated through matrices by \cite[Theorem 5]{Hadwin18}.
\\\\
Suppose then that $k(\phi)\cap Z(\HZH)=\la a \ra$ is cyclic.
Let $x\in \HZH\setminus Z(\HZH).$  We claim that $\phi(x)=0.$  Indeed one easily checks that there is a $y\in \HZH$ such that $[x,y]\in Z(\HZH)\setminus \la a \ra.$  We then have
\begin{equation*}
\phi(x)=\phi(y^{-1}xy)=\phi(x[x,y])=\phi(x)\phi([x,y])
\end{equation*}
where the last equality follows by (\ref{eq:multdom}).  Since $\phi([x,y])\neq 1$ we have $\phi(x)=0.$
It then follows from \cite[Theorem 7]{Hadwin18} that $\phi$ can be approximated through matrices.
\end{proof}
\begin{corollary}\label{2-stepNilpotent} For the central extension
\begin{equation*}
0\rightarrow \Z\rightarrow \mathbb{H}_3(\mathbb{Z}[\tfrac{1}{2}])\rightarrow \mathbb{H}_3(\mathbb{Z}[\tfrac{1}{2}])/\Z\rightarrow 0
\end{equation*}
the group $\HZH$ is HS-stable, but the quotient $\HZH/\Z$ is not MAP and therefore not HS-stable.  Hence Proposition
\ref{prop:stableext} can not be extended beyond finite groups.
\end{corollary}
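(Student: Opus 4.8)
The plan is to recognize that this corollary requires no genuinely new argument: it is the assembly of the two preceding results together with the fundamental obstruction recorded in Proposition \ref{AmenableGroupsHyperlinear}. So the proof should be short, and I would simply stitch the pieces together in the right order.

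First I would dispatch the positive half. That $\HZH$ is HS-stable is precisely the content of Theorem \ref{thm:Halltype}, so there is nothing further to check there. For the negative half, I would identify the central copy of $\Z$ appearing in the extension with $\la 1\ra\leq Z(\HZH)$, matching exactly the setup of Corollary \ref{lem:anothernonMAP}. That corollary already asserts that $\HZH/\Z$ is a class $2$ nilpotent group which is \emph{not} MAP, so the only remaining task is to upgrade ``not MAP'' to ``not HS-stable.''

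This upgrade is immediate from Proposition \ref{AmenableGroupsHyperlinear}: since $\HZH/\Z$ is nilpotent it is amenable, and an amenable HS-stable group must be MAP. Reading this contrapositively, a non-MAP amenable group cannot be HS-stable, so $\HZH/\Z$ is not HS-stable. Finally, to justify the concluding sentence about Proposition \ref{prop:stableext}, I would observe that that proposition requires the central kernel $N$ to be finite, whereas here $N=\Z$ is infinite; we have produced a central extension whose total group $\HZH$ is HS-stable but whose quotient $\HZH/\Z$ is not, so the finiteness hypothesis genuinely cannot be dropped.

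Honestly, there is no real obstacle at the level of this corollary---all the substance lives in the earlier Theorem \ref{thm:Halltype} (the trace-approximation argument via \cite{Hadwin18}) and in Corollary \ref{lem:anothernonMAP} (the failure of MAP via Proposition \ref{ref:MAPobstruction}, detecting that $[G,G]\cap Z(G)$ is not RF after quotienting). The one point I would take care to state cleanly is the amenability of $\HZH/\Z$, which is what licenses the use of the ``amenable HS-stable $\Rightarrow$ MAP'' implication rather than some weaker statement; this is transparent since quotients of nilpotent groups are nilpotent and hence amenable.
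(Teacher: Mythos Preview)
Your proposal is correct and matches the paper's approach exactly: the corollary is stated in the paper without a separate proof, being an immediate assembly of Theorem \ref{thm:Halltype}, Corollary \ref{lem:anothernonMAP}, and the ``amenable HS-stable $\Rightarrow$ MAP'' implication from Proposition \ref{AmenableGroupsHyperlinear}. There is nothing to add or correct.
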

\subsection{An HS-stable group that is not permutation stable}
We use an idea of P. Hall from \cite{Hall61}--in particular the group defined on Page 349--to build a finitely generated HS-stable group that is not permutation stable.  That the group is not permutation stable was shown by Becker, Lubotzky and Thom in \cite{Becker19}, our contribution is showing HS-stability.

\subsubsection{Topological dynamic preliminaries}   Let $\chi\in \dZH$, then $\chi$ is determined by the values $\chi(2^{-n})$ for $n\geq 0.$  Notice that $\chi(2^{-n})^2=\chi(2^{-n+1}).$  From this observation one easily checks that
\begin{equation}\label{eq:Rdualdesc}
\widehat{\mathbb{Z}[\tfrac{1}{2}]} \cong \{ (a_n)_{n=0}^\infty \in \T^\infty: a_{n+1}^2=a_n\text{ for }n\geq 0  \}
\end{equation}
where the isomorphism is given by $\chi\to (\chi(2^{-n}))_{n=0}^\infty.$\footnote{Alternatively notice that $\ZH$ is an inductive limit of $\Z$ via connecting homomorphisms $x\to 2x$ hence the dual group is the above projective limit of $\hat\Z\cong \T$}

Let $d:\T\times \T\rightarrow[0,1]$ be the metric $d(x,y)=\min\{ |\theta_1-\theta_2|: x=e^{2\pi i\theta_1}, y=e^{2\pi i\theta_2}  \}.$ Then define the metric $\rho$ on $\dZH$ by
\begin{equation*}
\rho(a,b)=\sum_{n\geq0} 2^{-n}d(a_n,b_n)
\end{equation*}
It is well known and easy to check that $\rho$ induces the usual topology of pointwise convergence on $\dZH.$ Let $a,b\in\dZH.$ An obvious but crucial remark is the following,
\begin{equation}\label{eq:coordest}
d(a_k,b_k)=d(a_{k+1}^2,b_{k+1}^2)\leq 2d(a_{k+1},b_{k+1}) \text{ for }k\geq0.
\end{equation}

Let $\alpha:\ZH\to \ZH$ be the automorphism $\alpha(x)=2x.$  Under the identification (\ref{eq:Rdualdesc}) the dual action and its inverse are given by
\begin{equation*}
\hat\alpha(a_0,a_1,a_2,\cdots)=(a_0^2,a_1^2,a_2^2,\cdots)=(a_0^2,a_0,a_1,\cdots)
\end{equation*}
\begin{equation} \label{eq:inverse}
\hat\alpha^{-1}(a_0,a_1,a_2,\cdots)=(a_1,a_2,a_3,\cdots).
\end{equation}
We investigate the periodic points of $\hat\alpha.$
\begin{lemma} \label{lem:periodic} Let $a\in\dZH$.  Then $a$ is an $N$-periodic point for $\hat\alpha$ if and only if $a_i$ is a $2^N-1$ root of unity for all $i=0,1,2,...$ Moreover for each $2^N-1$ root of unity $\lambda$ there is an $N$ periodic point $a$ with $a_{N-k}=\lambda^{2^k}$ for $k=0,...,N.$
\end{lemma}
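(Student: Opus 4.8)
The plan is to read everything off the explicit formula $\hat\alpha(a_0,a_1,a_2,\dots)=(a_0^2,a_1^2,a_2^2,\dots)$ for the dual action, which shows that $\hat\alpha$ acts on each coordinate by squaring. Iterating, $\hat\alpha^N(a)_n=a_n^{2^N}$ for every $n$, so the condition $\hat\alpha^N(a)=a$ is equivalent to $a_n^{2^N}=a_n$, i.e.\ $a_n^{2^N-1}=1$, for all $n$. This is exactly the assertion that every $a_n$ is a $(2^N-1)$-th root of unity, giving the stated equivalence with essentially no work.

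For the ``moreover'' part, fix a $(2^N-1)$-th root of unity $\lambda$ and write $\mu$ for the (cyclic) group of all $(2^N-1)$-th roots of unity. Since $\gcd(2,2^N-1)=1$, the squaring map $\psi(x)=x^2$ is an automorphism of $\mu$, and because $x^{2^N}=x$ for every $x\in\mu$ we have $\psi^N=\id$, so $\psi$ is invertible with $\psi^{-1}=\psi^{N-1}$. First I would define $a_n:=\psi^{-n}(\lambda)\in\mu\subseteq\T$ for all $n\ge0$. Then $a_{n+1}^2=\psi(a_{n+1})=\psi(\psi^{-(n+1)}(\lambda))=\psi^{-n}(\lambda)=a_n$, so $a=(a_n)$ satisfies the compatibility relation from \eqref{eq:Rdualdesc} and is therefore a genuine element of $\dZH$. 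Each coordinate $a_n$ lies in $\mu$, so $a_n^{2^N-1}=1$, and the first part of the lemma already gives $\hat\alpha^N(a)=a$, i.e.\ $a$ is $N$-periodic. Finally, using $\psi^{-N}=\id$, one computes $a_{N-k}=\psi^{-(N-k)}(\lambda)=\psi^{k}(\lambda)=\lambda^{2^k}$ for $k=0,\dots,N$, which is the required normalization.

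There is no real obstacle here: the equivalence is a one-line consequence of the coordinatewise squaring description of $\hat\alpha$, and the construction only requires the elementary observation that squaring is invertible on the odd-order cyclic group $\mu$. The single point that warrants a line of care is checking that the sequence $(\psi^{-n}(\lambda))_n$ genuinely satisfies the inverse-limit relation $a_{n+1}^2=a_n$ that defines $\dZH$ in \eqref{eq:Rdualdesc}, and this is exactly what the identity $\psi\circ\psi^{-1}=\id$ provides.
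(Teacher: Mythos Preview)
Your proof is correct and follows essentially the same approach as the paper's. Both arguments exploit that $\hat\alpha$ squares each coordinate, so $N$-periodicity is exactly the condition $a_n^{2^N-1}=1$; and both construct the periodic point in the ``moreover'' clause by repeatedly taking the unique square root of $\lambda$ inside the odd-order cyclic group of $(2^N-1)$-th roots of unity. Your use of the automorphism $\psi(x)=x^2$ of $\mu$ with $\psi^N=\id$ is just a cleaner way of packaging the paper's ``repeat the block $(a_0,\ldots,a_{N-1})$ indefinitely'' construction, and your explicit verification of the inverse-limit relation $a_{n+1}^2=a_n$ fills in what the paper leaves as ``easily checks.''
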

\begin{proof} By (\ref{eq:inverse}) it is clear that each entry of a $N$ periodic point must be a $2^N-1$ root of unity.
Conversely, suppose each $a_i$ is a $2^N-1$ root of unity.  By the description of $\dZH$ in (\ref{eq:Rdualdesc}) we have
$a_{i+N}=a_{i+N}^{2^N}=a_i.$

For the final statement one repeats the already defined block $(a_0,a_1,...,a_{N-1})$ indefinitely and easily checks that this defines an element of $\dZH.$
\end{proof}

\begin{lemma}\label{lem:denseperiodic} Let $a\in \dZH.$  For any integer $N\geq 1$, there is a $2N+1$ $\hat\alpha$-periodic point $b\in\dZH$ such that $\rho(a,b)\leq \frac{7}{3}2^{-N}.$
\end{lemma}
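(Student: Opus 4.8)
The plan is to build $b$ by rounding a single, carefully chosen coordinate of $a$ to a root of unity and then propagating that choice through the defining relations of $\dZH$ via Lemma \ref{lem:periodic}. Set $M=2N+1$. Since the $(2^M-1)$-th roots of unity are equally spaced on $\T$ with gaps $\tfrac{1}{2^M-1}$, there is such a root $\zeta$ with $d(a_N,\zeta)\le \tfrac{1}{2^M-1}$. The first key point is that I can realize $\zeta$ as the $N$-th coordinate of a $(2N+1)$-periodic point: by the ``moreover'' clause of Lemma \ref{lem:periodic}, each $(2^M-1)$-th root $\lambda$ yields an $M$-periodic $b$ with $b_j=\lambda^{2^{M-j}}$, so I need $\lambda^{2^{N+1}}=\zeta$. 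This is solvable because $2^{N+1}$ is coprime to the odd number $2^M-1$, so $\lambda\mapsto\lambda^{2^{N+1}}$ permutes the $(2^M-1)$-th roots of unity; choosing the corresponding $b$ gives a $(2N+1)$-periodic point with $b_N=\zeta$.

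Next I would estimate $\rho(a,b)=\sum_{n\ge 0}2^{-n}d(a_n,b_n)$ by splitting the sum at $n=N$. For the low coordinates $n\le N$, iterating the contraction-type estimate (\ref{eq:coordest}) downward from $N$ gives $d(a_n,b_n)\le 2^{N-n}d(a_N,b_N)$, since both $(a_n)$ and $(b_n)$ satisfy $x_n=x_{n+1}^2$. For the tail $n>N$ I would use only the trivial bound $d(a_n,b_n)\le 1$. The point of centering the rounding at the \emph{middle} coordinate $N$, rather than at the top coordinate $a_M$, is exactly to keep the amplification factor $2^{N-n}$ from overwhelming the geometric weights $2^{-n}$ as $n\to 0$: rounding $a_M$ instead would force $d(a_0,b_0)\le 2^M d(a_M,\zeta)$, which is of order one.

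Carrying out the two geometric sums is then routine. The low part is bounded by $d(a_N,b_N)\,2^N\sum_{n=0}^N 4^{-n}\le \tfrac{4}{3}\,2^N\,d(a_N,\zeta)\le \tfrac{4}{3}\cdot\tfrac{2^N}{2^{2N+1}-1}\le \tfrac43\,2^{-N}$, where the last inequality is $2^{2N}\le 2^{2N+1}-1$; the tail contributes $\sum_{n>N}2^{-n}=2^{-N}$. Adding these yields $\rho(a,b)\le \tfrac73\,2^{-N}$, as claimed.

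The main obstacle, and really the only content beyond routine estimation, is the first step: recognizing that one should prescribe the middle coordinate $b_N$ and verifying via the coprimality $\gcd(2^{N+1},2^M-1)=1$ that every root of unity is attainable there. Everything else is a matter of balancing the amplification in (\ref{eq:coordest}) against the weights in $\rho$ and summing two geometric series; the period $2N+1$ and the constant $\tfrac73$ emerge precisely from this balance.
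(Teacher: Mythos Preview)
Your proof is correct and follows essentially the same route as the paper's. Both arguments round $a_N$ to a $(2^{2N+1}-1)$-th root of unity, realize that value as the $N$-th coordinate of a $(2N+1)$-periodic point, and then split $\rho(a,b)$ at $n=N$, using (\ref{eq:coordest}) for the low coordinates and the trivial bound for the tail. Your coprimality argument $\gcd(2^{N+1},2^{2N+1}-1)=1$ is exactly the paper's use of unique square roots in the odd cyclic group (Remark \ref{rem:algebra}), just phrased differently; the numerical estimate is identical.
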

\begin{proof} Let $\lambda\in\T$ be a $2^{2N+1}-1$ root of unity. By Remark \ref{rem:algebra} each $2^{2N+1}-1$ root of unity has a unique square root that is also a $2^{2N+1}-1$ root of unity.  Let $\lambda^{\frac{1}{2^N}}$ be the unique $2^N$th root of $\lambda$ that is also a $2^{2N+1}-1$ root of unity.  Define
\begin{equation*}
b_k= \left(  \lambda^{\frac{1}{2^N}} \right)^{2^{2N-k}}\text{ for }k=0,...,2N
\end{equation*}
Then repeat the initial block of $b$ indefinitely, which will create a $2N+1$ periodic point by Lemma \ref{lem:periodic}.

Now let $a\in\dZH$ be arbitrary.  Choose a $2^{2N+1}-1$ root of unity $\lambda$ such that $d(\lambda,a_N)<\frac{1}{2^{2N+1}-1}.$ Build the periodic point $b$ as above. Notice that $\lambda=b_N.$   We then have
\begin{align*}
\rho(a,b)& \leq \sum_{k=0}^N 2^{-k}d(a_k,b_k) + 2^{-N}\\
&\leq \sum_{k=0}^N 2^{-k}[2^{N-k}d(a_N,b_N)]+ 2^{-N}  \text{ (by (\ref{eq:coordest}))}\\
&\leq \sum_{k=0}^N 2^{-k}[2^{N-k}\frac{1}{2^{2N+1}-1}]+ 2^{-N} \\
&\leq \frac{7}{3}2^{-N.}
\end{align*}
\end{proof}
The conditions are quite strong on the next lemma but they provide a very simple proof and suffice for the purposes of this section.
\begin{lemma}\label{lem:Lipdense} Let $(X,d)$ be a compact metric space and $\gamma$ a homeomorphism such that $\gamma$ and $\gamma^{-1}$ are both Lipschitz with constant bounded by $L.$  Suppose that there is a $C>0$ such that for every $n\in\N$ there is a $N>n$ such that for every $x\in X$ there is an $N$-periodic point $y\in X$ such that $d(x,y)\leq CL^{-N/2}.$  Then the set of $\gamma$-invariant probability measures on $X$ with finite support is weak*-dense in the set of all $\gamma$-invariant probability measures.
\end{lemma}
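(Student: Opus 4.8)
The plan is to build the approximating measures out of the uniform measures on periodic orbits. If $y$ is an $N$-periodic point, then $P_y := \frac{1}{N}\sum_{j=0}^{N-1}\delta_{\gamma^j y}$ is a finitely supported probability measure, and it is $\gamma$-invariant because $\gamma^N y = y$ merely cyclically permutes its atoms. Any convex combination of such orbit measures is therefore again finitely supported and $\gamma$-invariant. Since $X$ is a compact metric space, the weak${}^*$ topology on probability measures is metrizable, so it suffices to show: for every $\gamma$-invariant probability measure $\mu$, every finite set $f_1,\dots,f_m \in C(X)$, and every $\epsilon>0$, there is a convex combination $\nu$ of periodic orbit measures with $|\int f_r\, d\nu - \int f_r\, d\mu| < \epsilon$ for all $r$. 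I will treat a single $f$; the general case only changes constants.

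For the construction I first rewrite the target. By $\gamma$-invariance of $\mu$ we have $\int f(\gamma^j x)\,d\mu(x) = \int f\,d\mu$ for every $j\in\Z$, so with $M=\lfloor N/2\rfloor$,
\[ \int f\,d\mu = \frac1N\sum_{j=-M}^{N-1-M}\int_X f(\gamma^j x)\,d\mu(x). \]
Fix $\epsilon'>0$ (small, to be chosen) and use the hypothesis to pick $N$ as large as we like with the stated shadowing property. Partition $X$ into finitely many nonempty Borel sets $E_1,\dots,E_m$, each of diameter at most $CL^{-N/2}$ (cover by small balls and disjointify). In each $E_i$ choose a point $x_i$ and an $N$-periodic point $y_i$ with $d(x_i,y_i)\le CL^{-N/2}$, so that $d(x,y_i)\le 2CL^{-N/2}$ for all $x\in E_i$. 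Set $\nu := \sum_i \mu(E_i)\,P_{y_i}$, a convex combination of periodic orbit measures.

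The estimate is where the hypothesis does its work. Using that $y_i$ is $N$-periodic to recenter its orbit sum onto the symmetric window $\{-M,\dots,N-1-M\}$, one obtains
\[ \int f\,d\nu - \int f\,d\mu = \sum_i \frac1N\sum_{j=-M}^{N-1-M}\int_{E_i}\bigl(f(\gamma^j y_i) - f(\gamma^j x)\bigr)\,d\mu(x). \]
Since both $\gamma$ and $\gamma^{-1}$ are $L$-Lipschitz, $d(\gamma^j y_i,\gamma^j x)\le L^{|j|}\,d(y_i,x)\le 2C\,L^{|j|-N/2}$ for $x\in E_i$, and $|j|\le N/2$ throughout the window. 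The key point is that this bound is small precisely when $|j|$ is bounded away from $N/2$: if $K:=\lceil \log_L(2C/\epsilon')\rceil$, then $|j|\le N/2-K$ forces $d(\gamma^j y_i,\gamma^j x)\le\epsilon'$, whence $|f(\gamma^j y_i)-f(\gamma^j x)|\le\omega_f(\epsilon')$, where $\omega_f$ is the modulus of continuity of $f$. Only the at most $2K$ remaining indices are bad, and there the difference is bounded by $2\|f\|_\infty$. Hence
\[ \Bigl|\int f\,d\nu-\int f\,d\mu\Bigr| \le \omega_f(\epsilon') + \frac{4K\,\|f\|_\infty}{N}. \]
Choosing $\epsilon'$ small makes $\omega_f(\epsilon')<\epsilon/2$ (which fixes $K$), and then taking $N$ large — which the hypothesis permits — makes the second term $<\epsilon/2$.

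The main obstacle, and the whole reason the exponent $N/2$ appears in the hypothesis, is the exponential divergence of orbits: the Lipschitz bound $L^{|j|}$ lets the periodic orbit of $y_i$ shadow the genuine orbit of $x$ only for about $N/2$ steps in each direction. The device that rescues the argument is to exploit $N$-periodicity to recenter the Birkhoff average of $P_{y_i}$ onto the symmetric window; this confines the poorly controlled terms to the $O(1)$ indices with $|j|$ within $K$ of $N/2$, whose total weight $O(K/N)$ vanishes as $N\to\infty$, while the remaining $N-O(1)$ terms are controlled by the modulus of continuity. A routine bookkeeping then upgrades the single-function estimate to the finitely-many-function version needed for weak${}^*$ density, completing the plan.
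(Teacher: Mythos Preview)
Your argument is correct, and the core mechanism---recentering the Birkhoff sum on a symmetric window so that the two-sided Lipschitz bound $L^{|j|}$ is pitted against the shadowing radius $CL^{-N/2}$---is exactly the idea the paper uses. The packaging, however, is genuinely different. The paper proceeds by contradiction: it assumes some invariant $\nu$ lies outside the closure, invokes Hahn--Banach to produce a separating $f$, reduces to Lipschitz $f$ by density, and then bounds the full geometric sum $\sum_{|i|\le k} d(\gamma^i x,\gamma^i y)\le \tfrac{2C\sqrt{L}}{L-1}$ to show the averaged function $g=N^{-1}\sum f\circ\gamma^i$ is uniformly below the separating level, yielding a contradiction. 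You instead construct the approximating measure explicitly as $\sum_i \mu(E_i)P_{y_i}$ over a fine Borel partition, and rather than summing the geometric series you split indices into a ``good'' bulk (controlled by the modulus of continuity) and $O(\log_L(1/\epsilon'))$ ``bad'' boundary terms whose total weight is $O(1/N)$. Your route avoids Hahn--Banach and the reduction to Lipschitz test functions, and is arguably more elementary; the paper's route gives a cleaner single-line estimate once $f$ is Lipschitz. Both tacitly require $L>1$ (yours through $\log_L$, the paper's through the factor $L-1$ in the denominator), which is harmless since that is the only case of interest.
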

\begin{proof}  Let $x\in X$ and obtain a large $N$-periodic point $y$ with $d(x,y)\leq CL^{-N/2}.$  For the notational convenience of the  next estimate suppose that $N=2k+1$ is odd (in the even case the summation goes from $-k$ to $k-1$).  By assumption we have
\begin{equation} \label{eq:estimate}
\sum_{i=-k}^{k} d(\gamma^i(x),\gamma^i(y))\leq d(x,y)\sum_{i=-k}^k L^{|i|}\leq CL^{-(2k+1/2)}\sum_{i=-k}^k L^{|i|}\leq \frac{2C\sqrt{L}}{L-1}.
\end{equation}
Let $\mathcal{C}$ be the collection of all $\gamma$-invariant probability measures with finite support. Notice that $\mathcal C$ is convex.  By way of contradiction suppose there is some $\gamma$-invariant probability measure $\nu$ that is not in the weak*-closure of $\mathcal C.$  Then by the Hahn-Banach separation theorem  there is a continuous function $f:X\rightarrow\R$ such that
\begin{equation} \label{eq:HBdiff}
r:=\sup_{\mu\in \mathcal C} \int f d\mu<\int f d\nu.
\end{equation}
Since Lipschitz functions are uniformly dense in $C(X)$ we may assume that $f$ is Lipschitz with constant $K$.  Also without loss of generality (by replacing $f$ with $f-\int f d\nu$) suppose that $\int f d\nu=0.$

By assumption there are arbitrarily large $N$ such that the $N$-periodic points in $X$ form a $CL^{-N/2}$-net.  Again for notational convenience suppose that $N=2k+1$ is odd.  Let
\begin{equation*}
g=\frac{1}{N}\sum_{i=-k}^k f\circ \gamma^i
\end{equation*}
Notice that for any $\gamma$-invariant measure $\mu$ we have $\int g d\mu=\int f d\mu$ and that $g$ is constant on each orbit of period $N.$  Let $x$ be an $N$ periodic point and $\mu = N^{-1}\sum_{i=-k}^k \delta_{\gamma^i(x)}$ which is clearly $\gamma$-invariant.  Then
\begin{equation*}
g(x)=\frac{1}{N}\sum_{i=-k}^k g(\gamma^i(x)) = \int gd\mu\leq r.
\end{equation*}
Let $y\in X$ be arbitrary and choose an $N$-periodic point $x$ with $d(x,y)\leq CL^{-\frac{N}{2}}.$ By inequality (\ref{eq:estimate}) we have
\begin{equation*}
|g(x)-g(y)|\leq \frac{1}{N}\sum_{i=-k}^k |f\circ\gamma^i(x)-f\circ\gamma^i(y)|\leq N^{-1}\frac{2KC\sqrt{L}}{L-1}
\end{equation*}
By taking $N$ large enough we then have that $g(y)\leq \frac{r}{2}$ for all $y\in X.$  Then $\int g d\nu\leq \frac{r}{2}<0$ a contradiction.
\end{proof}

\begin{corollary}\label{cor:denseperiodic} Let $\beta = \alpha\times \alpha^{-1}$ and consider the action  $\hat \beta=\hat\alpha\times \hat\alpha^{-1}$ on $\dZH\times \dZH.$  Then the $\hat\beta$-invariant probability measures with finite support are weak*-dense in the set of all $\hat\beta$-invariant probability measures.
\end{corollary}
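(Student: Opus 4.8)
The plan is to deduce Corollary \ref{cor:denseperiodic} directly from the abstract density criterion of Lemma \ref{lem:Lipdense}, applied to the compact metric space $X=\dZH\times\dZH$ equipped with the product metric $\rho\big((a,b),(a',b')\big)=\rho(a,a')+\rho(b,b')$ and the homeomorphism $\gamma=\beta=\hat\alpha\times\hat\alpha^{-1}$. Since $X$ is a product of two copies of the compact metric space $\dZH$ and $\beta$ is a product of homeomorphisms, the ambient setup is immediate, and a point $(a,b)$ is fixed by $\beta^{P}$ precisely when both $a$ and $b$ are fixed by $\hat\alpha^{P}$. Everything then reduces to verifying the two hypotheses of Lemma \ref{lem:Lipdense}: a uniform Lipschitz bound $L$ for $\beta^{\pm1}$, and the existence of arbitrarily fine nets of $\beta$-periodic points at scale $CL^{-N/2}$.

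For the Lipschitz bound I would treat the two factors separately, since for the product metric $\mathrm{Lip}(\beta)=\max\{\mathrm{Lip}(\hat\alpha),\mathrm{Lip}(\hat\alpha^{-1})\}$, and likewise for $\beta^{-1}=\hat\alpha^{-1}\times\hat\alpha$. The backward map $\hat\alpha^{-1}$ is the one-sided shift (\ref{eq:inverse}), and a direct reindexing gives $\rho(\hat\alpha^{-1}a,\hat\alpha^{-1}b)\le 2\rho(a,b)$, so $\mathrm{Lip}(\hat\alpha^{-1})\le 2$. The forward map $\hat\alpha$ is the delicate one. Writing $\rho(\hat\alpha a,\hat\alpha b)=d(a_0^2,b_0^2)+\tfrac12\rho(a,b)$ and bounding $d(a_0^2,b_0^2)\le 2d(a_0,b_0)$ naively yields only the constant $\tfrac52$, which I expect to be the main obstacle: the scale $(\tfrac52)^{-N/2}$ would be strictly finer than the scale $2^{-N/2}$ at which I can actually produce periodic points, so the two hypotheses of Lemma \ref{lem:Lipdense} would not fit together. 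The resolution is to exploit the structural constraint (\ref{eq:Rdualdesc}). Iterating (\ref{eq:coordest}) gives $d(a_0,b_0)\le 2^k d(a_k,b_k)$ for all $k$, whence $\rho(a,b)\ge d(a_0,b_0)\sum_{k\ge0}4^{-k}=\tfrac43\,d(a_0,b_0)$, so that $d(a_0,b_0)\le\tfrac34\rho(a,b)$. Feeding this back produces $d(a_0^2,b_0^2)\le\tfrac32\rho(a,b)$ and therefore $\rho(\hat\alpha a,\hat\alpha b)\le 2\rho(a,b)$. Thus $L=2$ is a valid Lipschitz bound for $\beta^{\pm1}$, and obtaining this sharp constant rather than $\tfrac52$ is exactly what makes the net scale match in the final step.

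For the net of periodic points I would invoke Lemma \ref{lem:denseperiodic}, which provides for each $M$ a $(2M+1)$-periodic $\hat\alpha$-point within $\rho$-distance $\tfrac73 2^{-M}$ of any given point of $\dZH$; applying it coordinatewise produces a $(2M+1)$-periodic point of $\beta$ within distance $\tfrac{14}{3}2^{-M}$ of any $(a,b)\in X$. Setting $P=2M+1$ and using $2^{-M}=\sqrt2\,2^{-P/2}=\sqrt2\,L^{-P/2}$, the $P$-periodic points of $\beta$ form a $CL^{-P/2}$-net with $C=\tfrac{14\sqrt2}{3}$; since the periods $P=2M+1$ are unbounded, this holds for arbitrarily large $N$, verifying the second hypothesis of Lemma \ref{lem:Lipdense}. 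The lemma then yields that the $\beta$-invariant probability measures with finite support are weak*-dense in all $\beta$-invariant probability measures, which is the assertion.
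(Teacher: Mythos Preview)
Your proof is correct and follows exactly the same approach as the paper, which simply asserts that $\hat\alpha$ and $\hat\alpha^{-1}$ are both $2$-Lipschitz and then invokes Lemmas \ref{lem:denseperiodic} and \ref{lem:Lipdense}. Your argument supplies the details the paper omits; in particular, your observation that the naive estimate for $\hat\alpha$ only gives constant $\tfrac52$ and that one must use the constraint $d(a_k,b_k)\ge 2^{-k}d(a_0,b_0)$ to sum the geometric series and recover the constant $2$ is a genuine point the paper glosses over in its one-line proof.
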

\begin{proof} Notice that $\hat\alpha$ and $\hat\alpha^{-1}$ are both 2-Lipschitz. Hence the result follows from Lemmas \ref{lem:denseperiodic} and \ref{lem:Lipdense}.
\end{proof}
We record an obvious observation for later use
\begin{corollary}\label{cor:obviouscor} Let $k$ be an odd integer and let $\beta$ be as in Corollary \ref{cor:denseperiodic}. Notice that $\beta$ leaves the subgroup $(k\cdot \ZH)^2$ invariant.  Let $\pi:\ZH^2\rightarrow (k\cdot \ZH)^2$ be the isomorphism $(g,h)\mapsto (kg,kh).$  Then $\beta\circ\pi=\pi\circ\beta.$  In particular the $\hat \beta$-invariant measures on $(\widehat{k\cdot \ZH})^2$ with finite support are weak*-dense in the $\hat\beta$-invariant measures.
\end{corollary}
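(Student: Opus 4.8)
The plan is to transport Corollary~\ref{cor:denseperiodic} across the isomorphism $\pi$ using Pontryagin duality. Throughout, let $\beta$ denote the automorphism $\alpha\times\alpha^{-1}$ of $\ZH^2$ at the group level, and let $\hat\beta=\hat\alpha\times\hat\alpha^{-1}$ denote its dual action on $\dZH\times\dZH$ (this is the action written $\beta$ in Corollary~\ref{cor:denseperiodic}; the present statement overloads the symbol).

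First I would record the two elementary algebraic facts. Since $2$ is a unit in $\ZH$ we have $\alpha(k\cdot\ZH)=2k\cdot\ZH=k\cdot\ZH$ and likewise $\alpha^{-1}(k\cdot\ZH)=k\cdot\ZH$, so $\beta$ restricts to an automorphism of $(k\cdot\ZH)^2$. As $k$ is a nonzero integer and $\ZH$ is an integral domain, multiplication by $k$ is injective with image $k\cdot\ZH$, so $\pi\colon\ZH^2\to(k\cdot\ZH)^2$ is an isomorphism of discrete abelian groups. The intertwining relation is then a one-line check: for $(g,h)\in\ZH^2$,
\[
\beta(\pi(g,h))=\beta(kg,kh)=\bigl(2kg,\tfrac{kh}{2}\bigr)=\bigl(k(2g),k\tfrac{h}{2}\bigr)=\pi\bigl(2g,\tfrac{h}{2}\bigr)=\pi(\beta(g,h)),
\]
the only point being that multiplication by the integer $k$ commutes with multiplication and division by $2$. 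Hence $\pi$ is a $\beta$-equivariant isomorphism.

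Next I would dualize. Because $\pi$ is an isomorphism of discrete abelian groups, its transpose $\hat\pi(\chi)=\chi\circ\pi$ is a homeomorphism from $\widehat{(k\cdot\ZH)^2}=(\widehat{k\cdot\ZH})^2$ onto $\widehat{\ZH^2}=\dZH\times\dZH$. By naturality of Pontryagin duality the identity $\beta\circ\pi=\pi\circ\beta$ dualizes to $\hat\pi\circ\hat\beta=\hat\beta\circ\hat\pi$, so $\hat\pi$ is a topological conjugacy between the systems $\bigl((\widehat{k\cdot\ZH})^2,\hat\beta\bigr)$ and $\bigl(\dZH\times\dZH,\hat\beta\bigr)$. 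Consequently the push-forward $\mu\mapsto\hat\pi_*\mu$ is an affine weak*-homeomorphism between the two spaces of $\hat\beta$-invariant probability measures, and it carries finitely supported invariant measures to finitely supported invariant measures. Since Corollary~\ref{cor:denseperiodic} gives that the finitely supported $\hat\beta$-invariant measures are weak*-dense among all $\hat\beta$-invariant measures on $\dZH\times\dZH$, applying $\hat\pi_*^{-1}$ yields the same density statement on $(\widehat{k\cdot\ZH})^2$, which is the assertion.

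There is essentially no obstacle here: this is a transport-of-structure argument, which is why the authors call it an obvious observation. The only point that warrants a moment's care is the variance bookkeeping under duality, namely confirming that $\hat\pi$ conjugates the two $\hat\beta$-actions in the same time direction rather than reversing it; this is immediate once one applies the contravariant duality functor to the equality $\beta\circ\pi=\pi\circ\beta$.
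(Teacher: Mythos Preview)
Your argument is correct and is precisely the transport-of-structure reasoning the paper has in mind; the authors give no proof at all, labeling the corollary an ``obvious observation,'' and your Pontryagin-duality conjugacy unpacks exactly that. Your remark about the overloaded symbol $\beta$ and the variance check is accurate and worth noting.
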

\subsubsection{Characterization of some HS-stable groups}
 We recall that an automorphism $\gamma$ of a measure space $(X,\mu)$ is called \textbf{essentially free} if
 \begin{equation*}
 \mu(\{ x\in X:\gamma^n(x)=x, \text{ for some }n\in\Z\setminus\{ 0 \} \})=0.
\end{equation*}
\begin{lemma} \label{lem:proper/periodic} Let $A$ be a unital $k$-homogeneous C*-algebra where $k\in \N$ and let $\gamma$ be an automorphism of $A.$   Let $\tau$ be an extreme trace on the crossed product $A\rtimes_\gamma\Z.$
Then $\tau|_A$ is a $\gamma$-invariant trace on $A.$  Let $\nu = \nu_{\tau|_A}$ be the $\hat\gamma$-invariant probability measure on $\hat A$ as in (\ref{eq:traceonhomo})
 Then
either  $\hat\gamma$ is a periodic automorphism of $(\hat A,\nu)$ or $\hat\gamma$  is essentially free.
\end{lemma}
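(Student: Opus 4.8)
The plan is to convert the extremality (factoriality) of $\tau$ into ergodicity of the measure-preserving system $(\hat A,\nu,\hat\gamma)$, and then to read off the stated dichotomy from ergodicity by the usual argument on the sets of periodic points.

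First I would dispatch the preliminary assertions. Write $B=A\rtimes_\gamma\Z$ and let $u\in B$ be the implementing unitary, so $uau^*=\gamma(a)$ for $a\in A$. Since $\tau$ is tracial, $\tau(\gamma(a))=\tau(uau^*)=\tau(au^*u)=\tau(a)$, so $\tau|_A$ is $\gamma$-invariant; being the restriction of a trace it is itself a trace, and by the discussion in Section \ref{sec:opalgpre} the homeomorphism $\hat\gamma$ preserves the probability measure $\nu=\nu_{\tau|_A}$ of (\ref{eq:traceonhomo}).

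The heart of the argument is ergodicity. Put $M=\pi_\tau(B)''$, $N=\pi_\tau(A)''$ and $U=\pi_\tau(u)$, so that $M=(N\cup\{U\})''$ and $UNU^*=N$ with $\Ad U$ implementing $\gamma$ on $N$. Extremality of $\tau$ says $M$ is a factor. Because $A$ is $k$-homogeneous with spectrum $\hat A$, its center is $Z(A)=C(\hat A)$ and $\tau|_{Z(A)}$ is integration against $\nu$; hence $\pi_\tau(Z(A))''=L^\infty(\hat A,\nu)$, a subalgebra lying in $Z(N)$ (it commutes with $\pi_\tau(A)$ and sits inside $N$). Now let $E\subseteq\hat A$ be any $\hat\gamma$-invariant Borel set and let $p\in L^\infty(\hat A,\nu)$ be its indicator projection. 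Then $p$ is central in $N$, and since $\Ad U$ acts on $\pi_\tau(Z(A))''$ as the $\hat\gamma$-action (up to inverse) on $L^\infty(\hat A,\nu)$, invariance of $E$ gives $UpU^*=p$. Thus $p$ commutes with $N$ and with $U$, so $p\in Z(M)=\C 1$, forcing $\nu(E)=\tau(p)\in\{0,1\}$. This is exactly ergodicity of $(\hat A,\nu,\hat\gamma)$.

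Finally I would extract the dichotomy. For $n\geq 1$ set $F_n=\{x\in\hat A:\hat\gamma^n(x)=x\}$; each $F_n$ is $\hat\gamma$-invariant and Borel, so ergodicity gives $\nu(F_n)\in\{0,1\}$, and the set of periodic points of $\hat\gamma$ is $\bigcup_{n\geq 1}F_n$. If $\nu(F_n)=0$ for every $n$, the periodic points form a null set and $\hat\gamma$ is essentially free. Otherwise $\nu(F_n)=1$ for some $n$, i.e. $\hat\gamma^n=\id$ $\nu$-almost everywhere, so $\hat\gamma$ is periodic as an automorphism of $(\hat A,\nu)$. These alternatives are exhaustive, which is the claim. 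The only real obstacle is the ergodicity step, and within it the one point needing care is the identification $\pi_\tau(Z(A))''=L^\infty(\hat A,\nu)\subseteq Z(N)$ coming from the homogeneous-algebra GNS picture, together with the verification that $U$ commutes with the indicator of an invariant set; everything downstream is the routine periodic/free ergodic dichotomy.
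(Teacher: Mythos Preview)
Your proof is correct and follows essentially the same route as the paper's. Both arguments observe that indicator projections of $\hat\gamma$-invariant Borel sets, viewed inside $Z(\pi_\tau(A)'')$, commute with $\pi_\tau(u)$ and hence lie in the center of the factor $\pi_\tau(A\rtimes_\gamma\Z)''$, forcing such sets to be null or conull; the paper simply applies this directly to the fixed-point sets $F_n$ rather than first isolating the general ergodicity statement, and it phrases the identification as $\pi_\tau(A)''\cong L^\infty(\hat A,\nu)\otimes M_k$ rather than working only with the central subalgebra, but the substance is identical.
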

\begin{proof}  Let $u\in A\rtimes_\gamma\Z$ be the unitary implementing $\gamma.$  Since $\tau$ is extreme $\pi_\tau(A\rtimes_\gamma\Z)''$ is a factor (Section \ref{sec:opalgpre}).
By (\ref{eq:traceonhomo}) we have $\pi_\tau(A)'' \cong L^\infty(\hat A,\nu)\otimes M_k.$

For each $n\in \Z$ let $F_n=\{ I\in\hat A:\hat\gamma^n(I)=I \}.$ Suppose that the action of $\hat\gamma$ on $(\hat A,\nu)$ is not essentially free. Then there is an $n\geq1$ such that $\nu(F_n)>0.$ Clearly $F_n$ is  $\hat\gamma$-invariant. Since $F_n$ is closed it is Borel and therefore the  function $1_{F_n}\otimes 1_k \in  L^\infty(\hat A,\nu)\otimes M_k\cong \pi_\tau(A)''.$  But since $F_n$ is $\hat\gamma$ invariant we have
\begin{equation*}
\pi_\tau(u)(1_{F_n}\otimes 1_k)\pi_\tau(u)^*=1_{\hat\gamma^{-1}(F_n)}\otimes 1_k=1_{F_n}\otimes 1_k.
\end{equation*}
Since $\pi_\tau(A\rtimes_\gamma\Z)''$ is generated by $\pi_\tau(A)$ and $\pi_\tau(u)$ it follows that $1_{F_n}\otimes 1_k$ is a central projection.  Moreover since $\nu(F_n)>0$ and $\pi_\tau(A\rtimes_\gamma\Z)''$ is a factor, we have $\nu(F_n)=1.$ But for all $I\in F_n$ we have $\hat\gamma^n(I)=I$, hence $\hat\gamma$ is a periodic automorphism of the measure space $(\hat A,\nu).$
\end{proof}

We will require the full generality of Lemma \ref{lem:proper/periodic} later, but in the case $k=1$ we combine the previous lemma with a classic result of Klaus Thomsen \cite[Theorem 4.3]{Thomsen95} to obtain the following corollary.  The same theorem in the finitely generated case was recently shown in \cite[Proposition 10.2]{Levit22}. In \cite{Levit22} the authors obtain a description of characters on metabelian groups and use it to obtain \cite[Proposition 10.2]{Levit22}.  We instead use Thomsen's theorem \cite[Theorem 4.3]{Thomsen95} on uniqueness of traces on crossed products.
\begin{corollary}\label{cor:densinvmeasure} Let $G$ be a countable abelian group and $\gamma$ an automorphism.  Then $G\rtimes_\gamma\Z$ is HS-stable if and only if the $\hat\gamma$- invariant measures on $\hat G$ with finite support are weak*-dense in the set of all $\hat\gamma$-invariant measures on $\hat G.$
\end{corollary}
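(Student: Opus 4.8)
The plan is to reduce everything to the trace characterization of HS-stability for amenable groups \cite{Hadwin18}: since $G$ is abelian and $\Z$ is amenable, $G\rtimes_\gamma\Z$ is solvable, hence amenable, so it is HS-stable iff every character (extreme trace) is a pointwise limit of normalized traces of finite-dimensional representations. Throughout I identify $C^*(G\rtimes_\gamma\Z)\cong C(\hat G)\rtimes_{\hat\gamma}\Z$, so that a trace restricted to $C(\hat G)$ becomes a $\hat\gamma$-invariant probability measure $\nu$ on $\hat G$ via \eqref{eq:traceonhomo} with $k=1$. The workhorse is Lemma \ref{lem:proper/periodic}: for an extreme trace $\tau$, the action $\hat\gamma$ on $(\hat G,\nu)$ is either periodic or essentially free.

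For the direction assuming weak*-density of the finitely supported invariant measures, I would take a character $\tau$ and split along this dichotomy. In the periodic case, extremality forces $\nu$ to be ergodic and hence the uniform measure on a single finite orbit; then $\pi_\tau(C(\hat G)\rtimes_{\hat\gamma}\Z)''$ is a finite-dimensional factor and $\tau$ is literally the normalized trace of a finite-dimensional representation, so there is nothing to approximate. In the essentially free case, Thomsen's theorem \cite[Theorem 4.3]{Thomsen95} pins $\tau$ down to be the trivial extension of $\nu$, that is $\tau(gu^j)=\delta_{j,0}\int_{\hat G}\chi(g)\,d\nu(\chi)$. Now I use the hypothesis to pick finitely supported $\hat\gamma$-invariant $\mu_m\to\nu$ weak*, write each $\mu_m$ as a convex combination of orbit measures $\nu_{I_\ell,n_\ell}$, and assemble the finite-dimensional representations $\pi_{I_\ell,n_\ell}$ of \eqref{eq:halfinduced} (with multiplicities matching the convex weights up to $1/m$) into a single representation $\Pi_m$. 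On $g\in G$ the normalized trace of $\Pi_m$ is essentially $\int \chi(g)\,d\mu_m\to\int\chi(g)\,d\nu$ by weak* convergence, while on $gu^j$ with $j\neq 0$ formula \eqref{eq:tpres} shows that only orbits of period dividing $j$ contribute.

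The main obstacle is exactly this last point: I must guarantee that the contribution on $gu^j$ ($j\neq 0$) vanishes in the limit, so that $\tr\circ\Pi_m$ converges to the trivial extension rather than to some other trace extending $\nu$. The remedy is a portmanteau argument: the set $F_P=\bigcup_{1\le j\le P}\{\chi:\hat\gamma^j\chi=\chi\}$ is closed and, by essential freeness, $\nu(F_P)=0$ for every $P$; upper semicontinuity of mass on closed sets then gives $\limsup_m \mu_m(F_P)\le \nu(F_P)=0$, so the total weight carried by orbits of period $\le P$ tends to $0$. Since only such orbits (those with $n_\ell\mid j$) can produce a nonzero trace on $gu^j$, and each unitary trace is bounded by $1$ in absolute value, the off-diagonal part dies and $\tr\circ\Pi_m(gu^j)\to 0$. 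This yields pointwise convergence $\tr\circ\Pi_m\to\tau$ and establishes HS-stability.

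For the converse, assume $G\rtimes_\gamma\Z$ is HS-stable and let $\nu$ be an arbitrary $\hat\gamma$-invariant probability measure. Its trivial extension is a trace, hence a pointwise limit of normalized traces $\tr\circ\pi_m$ of finite-dimensional representations $\pi_m$. Restricting $\pi_m$ to $C(\hat G)$ diagonalizes it over finitely many points of $\hat G$, and $\hat\gamma$-invariance (from $\pi_m(u)\pi_m(f)\pi_m(u)^*=\pi_m(\gamma f)$, which preserves the trace) makes the associated finitely supported measure $\mu_m$ invariant. Pointwise convergence on the group elements $g\in G$ reads $\int\chi(g)\,d\mu_m\to\int\chi(g)\,d\nu$; since the functions $\chi\mapsto\chi(g)$ span a dense unital $*$-subalgebra of $C(\hat G)$ that separates points, this forces $\mu_m\to\nu$ weak*. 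Hence the finitely supported invariant measures are weak*-dense, completing the equivalence.
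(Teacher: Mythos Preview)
Your argument is correct, but it diverges from the paper's route in two places and supplies something the paper omits. In the periodic case, the paper does not deduce that $\nu$ is supported on a single finite orbit; it simply observes that $\pi_\tau(u^n)$ is central in the factor $\pi_\tau(G\rtimes_\gamma\Z)''$ and hence a scalar, so that $\tau$ may be regarded as a trace on the virtually abelian group $G\rtimes_\gamma(\Z/n\Z)$ and then invokes \cite{Hadwin18}. Your ergodicity argument is equally valid and more self-contained. In the essentially free case, the paper sidesteps your portmanteau step altogether: once Thomsen forces $\tau$ to be the trivial extension of $\nu$, the trivial extensions $\tau_m$ of the approximating finitely supported $\mu_m$ already converge pointwise to $\tau$ (both vanish identically off $G$), so one only needs to matricially approximate each $\tau_m$; for a single orbit of period $p$ this is done by choosing $n$ in $\pi_{I,n}$ to be an arbitrarily \emph{large} multiple of $p$, after which \eqref{eq:tpres} makes $\tr_{n}\circ\pi_{I,n}$ agree with the trivial extension on every $gu^j$ with $|j|<n$. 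Your version instead fixes $n_\ell$ equal to the period and appeals to upper semicontinuity of $\mu_m$ on the closed sets $F_P$ to kill the off-diagonal contribution --- correct, but a longer detour than simply inflating $n$. Finally, you provide a proof of the converse direction, which the paper's proof block leaves implicit.
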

\begin{proof} Suppose first that $G\rtimes_\gamma \Z$ is HS-stable and let $\mu$ be a $\hat\gamma$-invariant measure on $\hat G.$ Let $\tau$ be the trivial extension of $\mu$ to $G\rtimes_\gamma \Z.$
By Theorem \ref{thm:HSchar} let $\tau_n$ be a sequence of traces on $G\rtimes_\gamma \Z$ that factor through finite dimensional representations and converge to $\tau.$  Then $\tau_n$ restricted to $G$ determines a sequence of $\hat \gamma$-invariant measures on $\hat G$ with finite support that converge weak* to $\mu.$

Conversely, let $\tau$ be an extreme trace on $G\rtimes_\gamma \Z$ and let $\nu$ be the induced $\hat \gamma$-invariant measure on $\hat G.$  Suppose that $\hat\gamma$ is periodic on $(\hat G,\nu)$ with period $n$. Let $u\in C^*(G)\rtimes_\gamma\Z$ be the implementing unitary.  Since $\tau$ is extreme, $\pi_\tau(G\rtimes\Z)''$ is a factor and therefore $\pi_\tau(u^n)$ is a scalar multiple of the identity.  It follows that $\pi_\tau(G\rtimes_\gamma\Z)$ is virtually abelian and we may therefore regard $\tau$ as a trace on a virtually abelian group.  Hence $\tau$ can be approximated by matrices by \cite{Hadwin18}.

If the action is not periodic, then by Lemma \ref{lem:proper/periodic} the action is essentially free. By a result of Thomsen \cite{Thomsen95} (see \cite[Theorem 1.11]{Ursu21} for the formulation we are using) the $\gamma$-invariant trace $\nu$ on $C(\hat G)$ has a unique extension--which is necessarily $\tau$--to $C(\hat G)\rtimes_\gamma \Z.$  This unique extension is the trivial one and in particular we have $\tau(g)=0$ for all $g\not\in G.$

The rest of the proof is effectively contained in the definitions and observations of Section \ref{sec:opalgpre}.
By taking convex combinations we may, without loss of generality, suppose that $\nu$ can be weak*-approximated by measures supported on a single finite orbit.  Let $\mu$ be a measure supported on the orbit of a periodic point $I$.  Let $n$ be large so the period of $I$ divides $n.$  Let $\tau'$ be the trace on $G\rtimes_\gamma\Z$ obtained by the trivial extension.  Then the representations $\pi_{I,n}$ from
(\ref{eq:halfinduced}) and observation (\ref{eq:tpres}) show that\footnote{the key point here is that $\tau(g)=0$ when $g\not\in G$} $\tau'$ can be approximated through matrices.  It then follows that $\tau$ can also be approximated through matrices. Hence HS-stability follows from Theorem \ref{thm:HSchar}.
\end{proof}

\subsubsection{Main result}  We briefly recall the construction of twisted group C*-algebras from \cite{Packer92} for amenable groups\footnote{The usual reduced/full issues come into play for non-amenable groups, but all of the groups considered here are amenable.}.

Let $G$ be a discrete, amenable group and $\sigma:G\times G\to\T$ a 2-cocycle.  Then the \textbf{twisted group} C*-algebra $C^*(G,\sigma)$ is the universal C*-algebra generated by unitaries $\{ u_g:g\in G \}$ subject to the relations $u_gu_h=\sigma(g,h)u_{gh}.$  The twisted group C*-algebra $C^*(G,\sigma)$ is equipped with a canonical faithful trace $\tau$ satisfying $\tau(u_g)=0$ when $g\neq e.$

\begin{lemma}\label{lem:stronglyouter} Let $G$ be a countable amenable group and $\sigma:G\times G\rightarrow \T$ a 2-cocycle.  Let $\gamma$ be an automorphism of $G$ that preserves $\sigma$ and therefore induces an automorphism of $C^*(G,\sigma).$  Let $\tau$ be the canonical trace on $C^*(G,\sigma)$ and let $M=\pi_\tau(C^*(G,\sigma))''.$  Since $\tau\circ \gamma=\tau$ (trivially), $\gamma$ extends to an automorphism of $M.$  If for each $t\in G$ the set $\{ \gamma(g)^{-1}tg:g\in G \}$ is infinite, then $\gamma$ is an outer automorphism of $M.$
\end{lemma}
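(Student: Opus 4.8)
The plan is to realize $M$ as the twisted group von Neumann algebra of $(G,\sigma)$ and run the standard Fourier-coefficient argument for outerness, tracking the cocycle only up to modulus-one scalars. First I would record that the canonical unitaries $\{u_g\}_{g\in G}$ form an orthonormal basis of the GNS space $L^2(M,\tau)$: indeed $u_g^* u_h$ equals a modulus-one scalar times $u_{g^{-1}h}$, so $\tau(u_g^* u_h)=\delta_{g,h}$, and the linear span of the $u_g$ is $\|\cdot\|_2$-dense. Consequently every $x\in M$ has a Fourier expansion $x=\sum_g \widehat{x}(g)u_g$ with $\sum_g|\widehat{x}(g)|^2=\tau(x^*x)<\infty$.

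Assume toward a contradiction that the extension of $\gamma$ to $M$ is inner, say $\gamma=\Ad(w)$ for a unitary $w\in M$. Since $\gamma(u_t)=u_{\gamma(t)}$, this gives the intertwining relation $wu_t=u_{\gamma(t)}w$ for all $t\in G$, equivalently $w=u_{\gamma(t)}^*wu_t$. Expanding $w=\sum_g\widehat{w}(g)u_g$ and using that $u_{\gamma(t)}^*u_gu_t$ is a modulus-one scalar times $u_{\gamma(t)^{-1}gt}$, I would compare Fourier coefficients to conclude that $|\widehat{w}(g)|=|\widehat{w}(\gamma(t)^{-1}gt)|$ for all $g,t\in G$; the cocycle scalars are harmless because only absolute values enter.

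Since $w$ is unitary, $\|w\|_2^2=1$, so there is some $g_0$ with $|\widehat{w}(g_0)|=c>0$. The invariance above then forces $|\widehat{w}|\equiv c$ on the entire set $L:=\{\gamma(t)^{-1}g_0t:t\in G\}$. The crucial combinatorial point is that $L$ is infinite: inverting each element and substituting $g=t^{-1}$ shows $L^{-1}=\{g\,g_0^{-1}\,\gamma(g^{-1}):g\in G\}$, which is exactly the hypothesis set (the $\gamma$-twisted conjugacy class of $g_0^{-1}$) and hence infinite by assumption, so $L$ is infinite as well. But then $\sum_g|\widehat{w}(g)|^2\geq\sum_{h\in L}c^2=\infty$, contradicting $\|w\|_2^2=1$. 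Hence $\gamma$ is outer.

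I expect the only real subtlety to be the bookkeeping that matches the set $L$ produced by the computation (a ``left'' twisted conjugacy class) with the ``right'' twisted conjugacy class $\{gt\gamma(g^{-1})\}$ appearing in the hypothesis; this is handled by the single reparametrization $x\mapsto x^{-1}$, $t\mapsto t^{-1}$ used above. Everything else---the orthonormal basis, Parseval, and the modulus-one cocycle scalars---is routine, and amenability enters only implicitly, to guarantee the canonical faithful trace and the standard-form picture of $M$.
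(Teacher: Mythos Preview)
Your proof is correct and follows essentially the same Fourier-coefficient argument as the paper: assume an implementing unitary, expand in the orthonormal basis $\{u_g\}$, observe that the intertwining relation forces $|\widehat w|$ to be constant along infinite twisted conjugacy classes, and derive a contradiction with $\|w\|_2<\infty$. If anything you are slightly more careful than the paper in explicitly matching the orbit $\{\gamma(t)^{-1}g_0 t\}$ produced by the computation with the hypothesis set $\{g t \gamma(g^{-1})\}$ via the inversion-and-relabeling step.
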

\begin{proof} Suppose to the contrary that there is a unitary $W\in M$ such that $Wu_g=u_{\gamma(g)}W$ for all $g\in G.$  Then consider the Fourier decomposition $W=\sum_{t\in G}c_tu_t$ where the sum is in the $L^2$-norm.  For every $g\in G$ we decompose the equation $Wu_g=u_{\gamma(g)}W$ to obtain
\begin{equation*}
\sum_{t\in G}\sigma(t,g)c_tu_{tg}= \sum_{t\in G}c_t\sigma(\gamma(g),t)u_{\gamma(g)t}=\sum_{t\in G}c_{\gamma(g)^{-1}tg}\sigma(\gamma(g),\gamma(g)^{-1}tg)u_{tg}
\end{equation*}
Hence for each $t$ we have $|c_t|=|c_{\gamma(g)^{-1}tg}|$ for all $g\in G.$  Since the set $\{ \gamma(g)^{-1}tg:g\in G \}$ is infinite if $c_t\neq0$ this would imply that $W$ has infinite $L^2$-norm a contradiction.  Hence $c_t=0$ implying that $W$ is not  unitary, a contradiction.
\end{proof}
\begin{definition}\label{def:indcocycle} Let $\chi \in \dZH.$  We define the 2-cocycle $\sigma_\chi:\ZH^2\times \ZH^2\rightarrow \T$ by $\sigma_\chi(g,h)=\chi(g_1h_2).$
\end{definition}
\begin{lemma}\label{lem:khomo} Let  $\chi \in \dZH $ with $\ker(\chi)=k\cdot \ZH$ for some odd integer $k.$  Let $\sigma=
\sigma_\chi$ as in Definition \ref{def:indcocycle}.  Let  $C^*(\ZH^2,\sigma)$ be the twisted group C*-algebra with canonical generators $\{ u_g:g\in \ZH^2 \}.$   Let $C=C^*(\{u_g:g\in k\cdot \ZH\times k\cdot \ZH\}).$ Then
\begin{enumerate}
\item $Z(C^*(\ZH^2,\sigma))=C$
\item The map $J\to J\cap C$ is a homeomorphism from $\text{Prim}(C^*(\ZH^2,\sigma))$ onto $\widehat{k\cdot \ZH}^2$
\item $C^*(\ZH^2,\sigma)$ is $k$-homogeneous with spectrum given in part 2 as $\widehat{k\cdot \ZH}^2.$
\end{enumerate}
\end{lemma}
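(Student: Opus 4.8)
The plan is to recognize $C^*(\ZH^2,\sigma)$ as a bundle of matrix algebras over the abelian central subalgebra $C$. First I would read off the commutation relation: from $u_gu_h=\sigma(g,h)u_{g+h}$ together with $\sigma_\chi(g,h)=\chi(g_1h_2-g_2h_1)=\overline{\sigma_\chi(h,g)}$ one gets the alternating bicharacter $u_gu_hu_g^{-1}u_h^{-1}=\chi\bigl(2(g_1h_2-g_2h_1)\bigr)=:B(g,h)$ on $\ZH^2$. Its radical $\{g:B(g,\cdot)\equiv 1\}$ is exactly $N:=k\cdot\ZH\times k\cdot\ZH$: testing against $h=(1,0)$ and $h=(0,1)$ forces $2g_2,2g_1\in\ker\chi=k\cdot\ZH$, and since $2$ is invertible in $\ZH$ this is equivalent to $g_1,g_2\in k\cdot\ZH$; conversely every $g\in N$ clearly lies in the radical. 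Hence each $u_g$ with $g\in N$ commutes with every generator, so $C\subseteq Z(C^*(\ZH^2,\sigma))$. Moreover for $g,h\in N$ we have $g_1h_2-g_2h_1\in k^2\ZH\subseteq k\cdot\ZH=\ker\chi$, so $\sigma|_{N\times N}\equiv 1$ and $C\cong C^*(N)\cong C(\widehat N)=C(\widehat{k\cdot\ZH}^2)$ is abelian. This already gives the inclusion $C\subseteq Z$ needed for (1) and sets up the base space for (2).

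Next I would analyze the fibers over $\widehat N=\textrm{Spec}(C)$. Fix $\lambda\in\widehat N$ and let $A_\lambda$ be the quotient of $C^*(\ZH^2,\sigma)$ by the ideal generated by $\{u_g-\lambda(g)1:g\in N\}$. Choosing a set-theoretic section of the finite quotient $Q:=\ZH^2/N$ and using $u_n=\lambda(n)$ for $n\in N$, every $u_g$ becomes a scalar multiple of a generator indexed by $Q$, so $A_\lambda$ is the twisted group algebra $C^*(Q,\sigma_\lambda)$ of a finite abelian group. By Lemma \ref{lem:noncsub}, $\ZH/k\cdot\ZH\cong\Z/k\Z$, so $Q\cong(\Z/k\Z)^2$ has order $k^2$, and the commutator bicharacter of $\sigma_\lambda$ is precisely the form $\overline B$ induced by $B$ on $Q$, which is non-degenerate because $N$ is exactly the radical of $B$. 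A twisted group algebra of a finite abelian group whose commutator form is non-degenerate has trivial center (its center is spanned by the generators indexed by the radical), hence is simple, hence a matrix algebra; comparing dimensions, $\dim A_\lambda=|Q|=k^2$ gives $A_\lambda\cong M_k$. Concretely one realizes $A_\lambda$ via clock-and-shift unitaries $U,V$ with $UV=\zeta VU$ for a primitive $k$-th root of unity $\zeta$.

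Finally I would assemble the three claims. Since $C$ is central, any irreducible representation $\pi$ sends $C$ into the scalars, so it restricts to a character $\lambda_\pi\in\widehat N$ and factors through $A_{\lambda_\pi}\cong M_k$; as $M_k$ is simple, $\pi$ is its unique irreducible representation and is $k$-dimensional, proving (3), that $C^*(\ZH^2,\sigma)$ is $k$-homogeneous. By the theory of $k$-homogeneous C*-algebras the spectrum is then compact Hausdorff and each quotient by a primitive ideal is $M_k$. The assignment $\pi\mapsto\lambda_\pi$, equivalently $J=\ker\pi\mapsto J\cap C\in\textrm{Spec}(C)=\widehat{k\cdot\ZH}^2$, is surjective (each $\lambda$ is realized by the irreducible representation of $A_\lambda$) and injective (distinct $\lambda$ are distinguished already on the central copy $C$); being a continuous bijection between compact Hausdorff spaces it is a homeomorphism, giving (2). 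For (1), a $k$-homogeneous C*-algebra with spectrum $X$ has center $C(X)$, and here the central copy $C\cong C(\widehat N)$ already surjects onto $C(\textrm{Prim})$ under the Dauns--Hofmann identification, forcing $Z(C^*(\ZH^2,\sigma))=C$.

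The main obstacle is the fiber computation: one must check that collapsing the central subgroup $N$ genuinely produces the twisted group algebra of $Q$ with the induced symplectic form, and that non-degeneracy of that form—which hinges on $k$ being odd so multiplication by $2$ is invertible on $\ZH/k\cdot\ZH$—yields simplicity and hence $M_k$. The remaining topological point, promoting the spectral bijection to a homeomorphism, is routine given homogeneity and compactness of both spaces.
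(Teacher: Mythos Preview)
Your argument is correct and takes a genuinely different route from the paper's. The paper proceeds in the order (1)$\Rightarrow$(2)+(3): it first fixes the transversal $F=\{(i,j):0\le i,j\le k-1\}$, proves a unique decomposition $x=\sum_{g\in F}f_gu_g$ with $f_g\in C$ using faithfulness of the canonical trace, and reads off $Z=C$ by an explicit coefficient comparison; then for each point $c\in\widehat C$ it shows by hand that the quotient by the ideal generated by $I_c$ is $k^2$-dimensional with trivial center, hence $M_k$. You instead compute the radical of the commutator bicharacter $B$ to get $C\subseteq Z$, identify each fiber as a twisted group algebra of $(\Z/k\Z)^2$ with the induced non-degenerate form and invoke the standard fact that such algebras are simple (hence $M_k$), and only then deduce $Z=C$ from $k$-homogeneity together with the Dauns--Hofmann identification $Z\cong C(\mathrm{Prim})$. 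The paper's approach is entirely self-contained and computational; yours is more structural, makes the symplectic picture explicit, and generalizes verbatim to other discrete abelian groups with a bicharacter of finite radical index, at the cost of importing Dauns--Hofmann and the structure theory of finite twisted group algebras. One small point worth making explicit in your write-up: the fiber $A_\lambda$ is a priori only a quotient of $C^*(Q,\sigma_\lambda)$, but since it is spanned by $|Q|$ generators and surjects onto the $|Q|$-dimensional algebra $C^*(Q,\sigma_\lambda)$ (or, as you note, is realized concretely by clock-and-shift), the dimensions match and the identification is justified.
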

\begin{proof} We first record a cocycle calculation.  Let $g,h\in \ZH^2.$ Then
\begin{equation*} 
u_gu_{g^{-1}} = \sigma(g,g^{-1})u_e, \text{ hence } u_g^{-1} = \sigma(g,g^{-1})^{-1}u_{g^{-1}}.
\end{equation*}
It then follows that
\begin{equation}\label{eq:unitarycomp}
u_gu_hu_g^{-1}=\sigma(g,h)\sigma(gh,g^{-1})\sigma(g,g^{-1})^{-1}u_h=\chi(g_1h_2-h_1g_2)u_h.
\end{equation}

Let $F=\{ (i,j)\in \Z^2:0\leq i,j\leq k-1 \}\subseteq \ZH^2.$ By Lemma \ref{lem:RisRF}  $F$ is a complete set of coset representatives for the quotient mapping $\ZH^2 \to \ZH^2/(k\cdot \ZH)^2 \cong (\Z/k\Z)^2.$ It follows that for any  $x\in C^*(\ZH^2,\sigma)$  there exist $\{ f_g:g\in F  \}\subseteq C$ such that
\begin{equation}\label{eq:uniquedecomp}
x = \sum_{g\in F}f_gu_g.
\end{equation}
Let $\tau$ be the canonical trace on $C^*(\ZH^2,\sigma).$  Then for any $f\in C$ and $(i,j)\in F\setminus \{ (0,0) \}$ we have $\tau(fu_{(i,j)})=0.$ Suppose now that $x=0$ has a decomposition as in (\ref{eq:uniquedecomp}).  Then
\begin{equation*}
0=\tau(x^*x)=\tau\left(  \sum_{g,h\in F}f_{h}^*f_gu_{h^{-1}g} \right)=\tau\left(   \sum_{g\in F} f_g^*f_g \right)
\end{equation*}
Since $\tau$ is faithful we have $f_g=0$ for all $g\in F.$ It follows that every element $x\in C^*(\ZH^2,\sigma)$ has a \emph{unique} decomposition as in (\ref{eq:uniquedecomp}).

Next suppose that $x\in Z(C^*(\ZH^2,\sigma))$ and that $x$ is decomposed as in (\ref{eq:uniquedecomp}).  Fix $(i,j)\in F\setminus\{ (0,0) \}.$  If $i=j$, set $(a,b) = (1,0)$ and if $i\neq j$ set $(a,b) = (1,1).$  In either case we have $k\not| aj-bi.$
 Then by comparing coefficients of $x$ and $x=u_{(a,b)}xu_{(a,b)}^{-1}$ we have by (\ref{eq:unitarycomp}) that
\begin{equation*}
f_{(i,j)}=\chi(aj-bi)f_{(i,j)}.
\end{equation*}
Since $k\not| aj-bi$, we have by Lemma \ref{lem:RisRF}  that  $\chi(aj-bi)\neq1$ hence $f_{(i,j)}=0.$  This proves assertion 1.

Let $c$ be a point in the spectrum of $C$ and consider the maximal ideal of $C$,  $I_c = \{ f\in C:f(c)=0 \}.$  Let $J_c$ be the ideal of $C^*(\ZH^2,\sigma)$ generated by $I_c.$  We will show that $C^*(\ZH^2,\sigma)/J_c\cong M_k$ which will simultaneously prove points 2 and 3.

First notice that
\begin{equation} \label{eq:defofJgam}
J_c = \left\{ \sum_{g\in F}f_gu_g: f_g\in I_c  \right\}
\end{equation}
Let $x$ be as in (\ref{eq:uniquedecomp}). We then have

\begin{equation*}
x+J_c = \sum_{g\in F}f_g(c)u_g + J_c
\end{equation*}
Hence
\begin{equation*}
C^*(\mathbb{Z}[\tfrac{1}{2}]^2,\sigma)/J_c = \text{span}\{ u_g+J_c: g\in F \}
\end{equation*}
We claim that the set $\{ u_g+J_c: g\in F \}$ is linearly independent.  To that end suppose that for some $\lambda_g\in\C$ we have
\begin{equation*}
\sum_{g\in F}\lambda_g u_g + J_c = 0+J_c.
\end{equation*}
By the description of $J_c$ in (\ref{eq:defofJgam}) it follows that $\lambda_g\in I_c$ for all $g$, i.e. that $\lambda_g=0.$ Hence the dimension of $C^*(\ZH^2,\sigma)/J_c$ is $k^2$. We now show that $C^*(\ZH^2,\sigma)/J_c$ has trivial center.

Let $x=\sum_{g\in F}\lambda_gu_g+J_c$ with $\lambda_g\in \C.$  If $x$ is central, then for all $h\in F$ we have by (\ref{eq:unitarycomp}) that
\begin{equation*}
\sum_{g\in F}\lambda_gu_g+J_c = \sum_{g\in F}\lambda_g\chi(g_1h_2-h_1g_2))u_g+J_c
\end{equation*}
As above, for each non-trivial $g$ one can find an $h\in F$ so $\chi(g_1h_2-h_1g_2)\neq1$, from which it follows that $x$ is a scalar multiple of the identity.
\end{proof}
\begin{theorem}\label{HS-stableNotPermutationStable} Define $\beta:\HZH\to\HZH$ by
\begin{equation*}
\beta\left( \left[ \begin{array}{ccc} 1 & x & z\\ 0 & 1 & y\\ 0 & 0 & 1 \end{array}  \right]  \right)=\left[ \begin{array}{ccc} 1 & 2x & z\\ 0 & 1 & \frac{y}{2}\\ 0 & 0 & 1 \end{array}  \right]
\end{equation*}
Then $\HZH\rtimes_\beta\Z$ is HS-stable. Moreover by the proof of \cite[Corollary 8.7]{Becker19}, the group $\HZH\rtimes_\beta\Z$ is not permutation stable.
\end{theorem}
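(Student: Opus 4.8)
The plan is to reduce HS-stability of $\HZH\rtimes_\beta\Z$ to the density of finitely-supported invariant measures via the characterization machinery already developed, adapting the abelian Corollary~\ref{cor:densinvmeasure} to this non-abelian (but class-2 nilpotent base) setting. First I would identify the relevant crossed product structure. The group $\HZH$ is a central extension of $\ZH^2$ (the abelianization, with quotient map recording the $x,y$-coordinates) by the center $Z(\HZH)\cong\ZH$ (the $z$-coordinate). An extreme trace $\tau$ on $\HZH\rtimes_\beta\Z$ restricts to a multiplicative character $\chi$ on the center $Z(\HZH)=\ZH$, which by extremality is $\beta$-invariant; but $\beta$ fixes the center pointwise (the $z$-coordinate is unchanged), so every $\chi\in\dZH$ is allowed. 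Having fixed $\chi$, the quotient $C^*$-algebra of $C^*(\HZH)$ on which the center acts by $\chi$ is precisely a twisted group algebra $C^*(\ZH^2,\sigma_\chi)$ with the cocycle $\sigma_\chi$ from Definition~\ref{def:indcocycle}. This is exactly why Lemma~\ref{lem:khomo} was proved: when $\ker(\chi)=k\cdot\ZH$ for odd $k$, that algebra is $k$-homogeneous with spectrum $\widehat{k\cdot\ZH}^2$.

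Next I would analyze the induced action of $\beta$ on the spectrum. The automorphism $\beta$ sends $x\mapsto 2x$ and $y\mapsto y/2$, so on the quotient $\ZH^2$ it is $\alpha\times\alpha^{-1}$ where $\alpha(x)=2x$, and on the spectrum $\widehat{k\cdot\ZH}^2$ the dual action is $\hat\alpha\times\hat\alpha^{-1}$ — precisely the map studied in Corollary~\ref{cor:denseperiodic} and Corollary~\ref{cor:obviouscor}. The crucial dynamical input, Corollary~\ref{cor:obviouscor}, tells me that the $\hat\beta$-invariant measures on $\widehat{k\cdot\ZH}^2$ with finite support are weak*-dense in all $\hat\beta$-invariant measures. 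So for each admissible $\chi$ I would apply Lemma~\ref{lem:proper/periodic} to the $k$-homogeneous algebra $A=C^*(\ZH^2,\sigma_\chi)$ and its automorphism $\beta$: an extreme trace $\tau$ on $A\rtimes_\beta\Z$ either makes $\hat\beta$ periodic on $(\hat A,\nu_{\tau|_A})$ — in which case $\pi_\tau(u^n)$ is scalar, the image is virtually abelian, and $\tau$ is approximable by matrices via \cite{Hadwin18} — or $\hat\beta$ is essentially free. In the essentially free case I would invoke Thomsen's uniqueness theorem exactly as in Corollary~\ref{cor:densinvmeasure}: the trace on $A$ has a unique extension to $A\rtimes_\beta\Z$, namely the trivial one with $\tau(xu^i)=0$ for $i\neq 0$. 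Then the finite-dimensional representations $\pi_{I,n}$ from equation~(\ref{eq:halfinduced}), supported on finite orbits of periodic points and satisfying the trace-matching identity (\ref{eq:tpres}), approximate $\tau$ through matrices, using the density from Corollary~\ref{cor:obviouscor}.

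The main obstacle I anticipate is verifying that Lemma~\ref{lem:proper/periodic} and the induced-representation construction of Section~\ref{sec:opalgpre} genuinely apply here, since those were stated for a $k$-homogeneous algebra $A$ with an abelian spectrum, whereas our base $A=C^*(\ZH^2,\sigma_\chi)$ is \emph{noncommutative}. The key technical point is that Lemma~\ref{lem:khomo} guarantees $A$ is $k$-homogeneous with a genuine compact Hausdorff spectrum $\hat A\cong\widehat{k\cdot\ZH}^2$, so $\pi_\tau(A)''\cong L^\infty(\hat A,\nu)\otimes M_k$ and the entire apparatus — the measure $\nu_\tau$ of (\ref{eq:traceonhomo}), the inner implementation (\ref{eq:inneraut}), and the half-induced representations $\pi_{I,n}$ — carries over verbatim with $k>1$. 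I would also need to handle the degenerate case where $\ker(\chi)$ is not of the form $k\cdot\ZH$ for odd $k$: by Lemma~\ref{lem:noncsub} the only non-cyclic subgroups of $\ZH$ are the $k\cdot\ZH$, and if $\chi$ has cyclic (or trivial) kernel the homogeneity dimension degenerates, but in those cases the relevant primitive-ideal analysis simplifies and one reduces either to $k=1$ (the abelian Corollary~\ref{cor:densinvmeasure} applies directly) or to a virtually-abelian quotient. A careful case split on $\ker(\chi)$, together with the observation that $\beta$ fixes the center, should close the argument, after which the non-permutation-stability is cited directly from the proof of \cite[Corollary~8.7]{Becker19}.
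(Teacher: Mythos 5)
Your treatment of the non-cyclic-kernel case matches the paper's: restrict the extreme trace $\tau$ to the center to get $\chi$, pass to the twisted group algebra $C^*(\ZH^2,\sigma_\chi)$, use Lemma \ref{lem:khomo} for $k$-homogeneity, apply the periodic/essentially-free dichotomy of Lemma \ref{lem:proper/periodic}, invoke Thomsen's uniqueness theorem in the free case, and approximate via the representations $\pi_{I,n}$ using the density of finitely supported invariant measures from Corollary \ref{cor:obviouscor}. Your remark that the half-induced apparatus of Section \ref{sec:opalgpre} was set up precisely to work for a noncommutative $k$-homogeneous base is also on target.

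The genuine gap is your handling of the case where $\ker(\chi)$ is cyclic (including trivial). You claim this case ``reduces either to $k=1$ \ldots or to a virtually-abelian quotient,'' but neither reduction is available: when $\ker(\chi)$ is cyclic the quotient $\ZH/\ker(\chi)$ is still infinite, the cocycle $\sigma_\chi$ has no $\sigma$-regular conjugacy classes, and $C^*(\ZH^2,\sigma_\chi)$ is a \emph{simple, infinite-dimensional} C*-algebra with unique trace (by \cite[Theorem 1.7]{Packer89.1}); it has no finite-dimensional representations at all, so no virtually abelian or $k=1$ picture applies. The paper's argument here is entirely different and uses machinery you never invoke: first the commutator trick from Theorem \ref{thm:Halltype} shows $\tau$ vanishes on $\HZH\setminus Z(\HZH)$, so $\pi_\tau(C^*(\HZH))\cong C^*(\ZH^2,\sigma_\chi)$; then Lemma \ref{lem:stronglyouter} shows $\beta$ acts by an \emph{outer} automorphism of the associated II$_1$ factor, so by B\'edos \cite{Bedos93} the crossed product $C^*(\ZH^2,\sigma_\chi)\rtimes_\beta\Z$ is again simple with unique trace; this forces $\tau$ to be the centrally induced trace vanishing off $Z(\HZH\rtimes_\beta\Z)$, which is matricially approximable by \cite[Theorem 7]{Hadwin18} because $\HZH\rtimes_\beta\Z$ is residually finite. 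Without this simplicity/outerness step (the entire reason Lemma \ref{lem:stronglyouter} appears in the paper), the cyclic-kernel characters are not accounted for and the proof is incomplete.
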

\begin{proof} We show that $\HZH\rtimes_\beta\Z$ satisfies condition (iii) of Theorem \ref{thm:HSchar}.
Let $\tau$ be an extreme trace on $\HZH\rtimes_\beta\Z.$  Notice that $Z(\HZH\rtimes_\beta\Z)=Z(\HZH).$  Then $\tau$ restricted to $Z(\HZH)\cong \ZH$ is a character $\chi$.  Let $\sigma=\sigma_\chi$ be the cocycle defined in Definition \ref{def:indcocycle}. 

Define the lift $c:\ZH^2\rightarrow \HZH$ by
\begin{equation*}
c(x_1,x_2) = \left[ \begin{array}{ccc} 1 & x_1 & 0\\ 0 & 1 & x_2\\ 0 & 0 & 1 \end{array}  \right].
\end{equation*}
Then $\pi_\tau(c(x)c(y))=\sigma(x,y)\pi_\tau(c(xy)).$  By the universal property of twisted group C*-algebras the map $u_x\mapsto \pi_\tau(c(x))$ defines a quotient from $C^*(\ZH^2,\sigma)$ onto $\pi_\tau(C^*(\HZH)).$
\\\\
We now split the proof into two cases.
\newline
\emph{Case 1.}  The kernel of $\chi$ is cyclic. By the same reasoning as in the proof of Theorem \ref{thm:Halltype} we have that $\tau(g)=0$ for all $g\in \HZH\setminus Z(\HZH).$   Therefore $\pi_\tau(C^*(\HZH))\cong C^*( \ZH^2 ,\sigma).$  One checks that for every non-trivial $g\in \ZH^2$, there is an $h\in\ZH^2$ such that $\sigma(g,h)\overline{\sigma(h,g)}\neq 1.$  Hence there are no $\sigma$-regular conjugacy classes (\cite[Definition 1.1]{Packer89.1}) and therefore $C^*( \ZH^2 ,\sigma)$ is simple with unique trace by \cite[Theorem 1.7]{Packer89.1}.

By Lemma \ref{lem:stronglyouter} the action of $\beta$ on $C^*( \ZH^2 ,\sigma)$ extends to an outer automorphism of the von Neumann algebra generated by $C^*( \ZH^2 ,\sigma)$. Hence by \cite[Theorem 1]{Bedos93} the crossed product $C^*( \ZH^2 ,\sigma)\rtimes_\beta \Z$ is simple with a unique trace. Since $\pi_\tau(C^*(\HZH\rtimes_\beta\Z))$ is a quotient of $C^*( \ZH^2 ,\sigma)\rtimes_\beta \Z$ we must have that $\pi_\tau(C^*(\HZH\rtimes_\beta\Z))\cong C^*( \ZH^2 ,\sigma)\rtimes_\beta \Z$ also has a unique trace which is $\tau$.  This unique trace then satisfies $\tau(g)=0$ for $g\not\in Z(\HZH\rtimes_\beta\Z).$  Therefore $\tau$ is a centrally induced trace on a residually finite group and therefore can be approximated by matrices by \cite[Theorem 7]{Hadwin18}.
\newline
\emph{Case 2.} The kernel of $\chi$ is not cyclic.  Then by Lemma \ref{lem:noncsub} there is an odd integer $k$ so $\ker(\chi) = k\cdot \ZH.$ By Lemma \ref{lem:khomo}, $C^*(\ZH^2,\sigma)$ is $k$-homogeneous.  By Lemma \ref{lem:khomo} the spectrum of $C^*(\ZH^2,\sigma)$  is naturally identified with $(\widehat{k\cdot \ZH})^2.$

Let $\nu$ be the measure on  $X =\widehat{k\cdot\ZH}^2$ obtained by restricting $\tau$ as in Lemma \ref{lem:proper/periodic}.
By Lemma \ref{lem:proper/periodic} the action of $\hat\beta$ on $(X,\nu)$ is either periodic or essentially free.  If the action is periodic, then argue as in the proof of Corollary \ref{cor:densinvmeasure} that $\pi_\tau(\HZH\rtimes_\beta \Z)$ is a virtually abelian group and therefore  we may regard $\tau$ as a trace on a virtually abelian group.  By \cite[Theorems 4 and 5]{Hadwin18} it follows that $\tau$ is matricially approximable.

Finally suppose that the action of $\hat\beta$ on $(X,\nu)$ is essentially free. Then by \cite{Thomsen95} (again, see \cite[Theorem 11]{Ursu21} for the formulation we use), the measure $\nu$ has a unique tracial extension to the crossed product C*-algebra $C^*(\ZH^2,\sigma)\rtimes_\beta \Z.$ Then $\tau$ must be this extension and we have $\tau(g)=0$ for $g\not\in \HZH.$ Hence $\pi_\tau(C^*(\HZH\rtimes_\beta\Z))\cong C^*(\ZH^2,\sigma)\rtimes_\beta \Z.$

By Lemma \ref{cor:obviouscor} there is a sequence of $\beta$-invariant measures with finite support that converge weak* to $\nu.$ We now argue just as in the final paragraph of the proof of Corollary \ref{cor:densinvmeasure} to see that $\tau$ can be approximated by matrices.

In \cite[Corollary 8.7]{Becker19} they showed that a family of groups $A_p$ are not permutation stable.  The group $\HZH\rtimes_\beta\Z$ is not isomorphic to any $A_p$ nonetheless by replacing the group $H\leq A_p$ appearing in the proof of \cite[Corollary 8.7]{Becker19} with the pair $\{ \left[ \begin{array}{ccc} 1 & 0 & n\\ 0 & 1 & 0\\ 0 & 0 & 1 \end{array}  \right]:n\in \Z  \}\leq \HZH\rtimes_\beta\Z$ the proof of permutation instability of $\HZH\rtimes_\beta\Z$ follows nearly verbatim.
 \end{proof}

\section{(Very) flexible stability for amenable groups} \label{sec:flexiblestability}

For any Hilbert space $H$ with a fixed orthonormal basis $\{e_i\}$, the projection onto the span of the first $n$ basis vectors will be denoted by $P_n$. We identify $B(H)$ for $n$-dimensional $H$, with $M_n$.

\begin{definition} (Becker and Lyubotzky \cite{Becker20}) A C*-algebra $A$ is {\it flexibly HS-stable}  if for any approximate representation $\phi_n: A\to M_{k_n}$, $n\in \mathbb N$,  there exist $m_n\ge k_n$ with $\frac{k_n}{m_n}\to 1$ , and representations $\rho_n: A \to M_{m_n}$, $n\in \mathbb N$, such that for any $a\in A$
\begin{equation*}
\lim_{n\rightarrow\infty}\|\phi_n(a) - P_{k_n}\rho_n(a)P_{k_n}\|_2=0.
\end{equation*}
One obtains the notion of {\it very flexible stability} if one doesn't assume $\frac{k_n}{m_n}\to 1$. Similarly one defines these notions for a group $G.$
\end{definition}

\begin{proposition}\label{FlexiblyStable} Let $G$ be amenable. Then $G$ is flexibly HS-stable iff it is HS-stable.
\end{proposition}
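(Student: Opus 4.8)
The plan is to prove the two implications separately; the forward one is immediate and the reverse one rests on the trace characterization of HS-stability for amenable groups recalled in Section \ref{sec:Preliminaries}. First I would observe that HS-stability trivially implies flexible HS-stability: given an approximate representation $\phi_n\colon G\to\mathcal U(k_n)$ and genuine representations $\pi_n\colon G\to\mathcal U(k_n)$ with $\|\phi_n(g)-\pi_n(g)\|_2\to0$, one simply takes $m_n=k_n$, $\rho_n=\pi_n$ and $P_{k_n}=1_{k_n}$, so that $P_{k_n}\rho_n(g)P_{k_n}=\pi_n(g)$ and the dimension ratio $k_n/m_n\equiv1$ trivially tends to $1$.

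For the reverse implication I would fix a trace $\tau$ on $G$ and aim to exhibit it as a pointwise limit of normalized traces of finite-dimensional representations. Since $G$ is amenable it is hyperlinear (Proposition \ref{AmenableGroupsHyperlinear}), and in fact the von Neumann algebra generated by $G$ in the GNS representation of $\tau$ is hyperfinite; consequently $\tau$ is realized as the limit of normalized traces of some approximate representation, i.e.\ there exist $\phi_n\colon G\to\mathcal U(k_n)$ with $\|\phi_n(gh)-\phi_n(g)\phi_n(h)\|_2\to0$ and $\tr_{k_n}(\phi_n(g))\to\tau(g)$ for every $g$. (The $\phi_n$ are only \emph{approximate} representations, so this is not circular with the conclusion.) Now apply flexible HS-stability to $\phi_n$: there are $m_n\ge k_n$ with $k_n/m_n\to1$ and honest representations $\rho_n\colon G\to\mathcal U(m_n)$ with $\|\phi_n(g)-P_{k_n}\rho_n(g)P_{k_n}\|_2\to0$ for all $g$. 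I claim the \emph{full-dimensional} normalized traces of the $\rho_n$ already converge to $\tau$, which finishes the proof via the characterization.

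The heart of the argument is this trace comparison. Writing $\operatorname{Tr}$ for the unnormalized trace and $c_n:=P_{k_n}\rho_n(g)P_{k_n}\in M_{k_n}$, the identities $\operatorname{Tr}(\rho_n(g))=\operatorname{Tr}(P_{k_n}\rho_n(g))+\operatorname{Tr}((1-P_{k_n})\rho_n(g))$ and $\operatorname{Tr}(P_{k_n}\rho_n(g))=k_n\tr_{k_n}(c_n)$ give
\[
\tr_{m_n}(\rho_n(g))=\frac{k_n}{m_n}\,\tr_{k_n}(c_n)+\frac{1}{m_n}\operatorname{Tr}\big((1-P_{k_n})\rho_n(g)\big).
\]
The error term is controlled by Cauchy--Schwarz for the Hilbert--Schmidt inner product: since $1-P_{k_n}$ has rank $m_n-k_n$ and $\rho_n(g)$ is unitary of size $m_n$,
\[
\frac{1}{m_n}\big|\operatorname{Tr}\big((1-P_{k_n})\rho_n(g)\big)\big|\le\frac{\sqrt{(m_n-k_n)m_n}}{m_n}=\sqrt{1-\tfrac{k_n}{m_n}}\longrightarrow0.
\]
Meanwhile $|\tr_{k_n}(c_n)|\le1$ and $\tr_{k_n}(c_n)\to\tau(g)$: indeed $|\tr_{k_n}(c_n)-\tr_{k_n}(\phi_n(g))|\le\|c_n-\phi_n(g)\|_2\to0$ while $\tr_{k_n}(\phi_n(g))\to\tau(g)$. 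Combining these with $k_n/m_n\to1$ yields $\tr_{m_n}(\rho_n(g))\to\tau(g)$, so $\tau$ is a pointwise limit of normalized traces of the finite-dimensional representations $\rho_n$, and $G$ is HS-stable.

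The main obstacle I anticipate is not the estimate, which is elementary, but packaging the first step cleanly: producing an honest \emph{sequential} approximate representation $\phi_n$ whose traces converge to $\tau$ along the full sequence rather than merely along an ultrafilter. This requires invoking hyperfiniteness of the von Neumann algebra of an amenable group together with a standard microstates/diagonalization argument; alternatively one can run the whole comparison along a fixed free ultrafilter $\omega$ and conclude that $\tau$ lies in the pointwise closure of the finite-dimensional representation traces, which is exactly what the characterization requires.
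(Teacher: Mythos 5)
Your proposal is correct and follows essentially the same route as the paper: reduce to the trace characterization of HS-stability for amenable groups, use hyperlinearity of traces on amenable groups to produce an approximate representation whose normalized traces converge to $\tau$ (along an ultrafilter), apply flexible stability, and then compare $\tr_{m_n}(\rho_n(g))$ with the corner trace via Cauchy--Schwarz and the condition $k_n/m_n\to 1$. Your explicit bound $\sqrt{1-k_n/m_n}$ on the off-corner contribution just makes quantitative the step the paper dispatches with ``it follows easily,'' and your remark about working along a fixed ultrafilter matches the paper's actual treatment.
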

\begin{proof} By the characterization of HS-stability for amenable groups, it is sufficient to prove that for a flexibly HS-stable amenable group $G$ any character $\chi$ can be approximated by traces of finite-dimensional representations of $G$. Since $G$ is amenable, $\chi$ is hyperlinear, so there is a free ultrafilter $\omega$ and an approximate representation $\phi_n: G \to \mathcal U(n)$, $n\in \mathbb N$, such that for any $g\in G$
\begin{equation}\label{1}
\lim_{n\rightarrow\omega}|\chi(g) - \text{tr}_n (\phi_n(g))|=0.\end{equation}
By flexible stability of $G$, there are  representations $\rho_n: A \to M_{m_n}$, $m_n\ge n$, $n\in \mathbb N$, such that $\frac{n}{m_n}\to 1$ and $\|\phi_n(g) - P_{n}\rho_n(g)P_{n}\|_2 \to 0, $  for any $g\in G$.
By Cauchy-Schwarz inequality
\begin{equation}\label{2}|\text{tr}_{n} (\phi_n(g)) - \text{tr}_{m_n} (P_{n}\rho_n(g)P_{n})| \le \|\phi_n(g) - P_{n}\rho_n(g)P_{n}\|_2 \to 0, \end{equation}  for any $g\in G$. Since
\begin{equation*}
\text{tr}_{m_n} (\rho_n(g)) = \frac{n \; \text{tr}_{n} (P_n\rho_n(g)P_n) + (m_n-n) \; \text{tr}_{m_n-n} ((1-P_n)\rho_n(g)(1-P_n))}{m_n},
\end{equation*}
 it follows easily that for any $g\in G$ we have
\begin{equation}\label{3}|\text{tr}_{m_n} (\rho_n(g)) - \text{tr}_n (P_n\rho_n(g)P_n)|\to 0.\end{equation}
By (\ref{1}), (\ref{2}),and  (\ref{3}) we have for any $g\in G$
\begin{equation*}
\chi(g) = \lim_{n\rightarrow\omega} \text{tr}_{m_n} \rho_n(g).
\end{equation*}
Hence $G$ is HS-stable.
\end{proof}

\begin{remark} It follows from the proof that for any group, flexible stability implies that any hyperlinear trace on $G$ can be approximated by traces of finite-dimensional representations.
\end{remark}

Recall that a unital C*-algebra $A$ has the {\bf  Lifting Property (LP)} if for any quotient C*-algebra $B/I$  any unital  comletely positive  (u.c.p.) map $\phi: A \to B/I$  lifts to a unital completely positive map from $A$ to $B$. For basic information on completely positive maps we refer to \cite[II.6.9]{Blackadar06}.

A C*-algebra is {\bf  residually finite-dimensional (RFD)} if for any $0\neq a\in A$ there is a finite-dimensional representation $\rho$ of $A$ such that $\rho(a)\neq 0.$

\begin{theorem}\label{LP+RFD} Every residually finite dimensional C*-algebra $A$ with the Lifting Property is very flexibly HS-stable.
\end{theorem}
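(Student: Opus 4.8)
The plan is to run a three-step ``lift and dilate'' argument: realize the approximate representation as a genuine $*$-homomorphism into a quotient C*-algebra, lift it to honest u.c.p.\ maps using the lifting property, and finally dilate those u.c.p.\ maps to finite-dimensional representations using residual finite dimensionality.

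First I would package the given approximate representation $\phi_n\colon A\to M_{k_n}$ (unital, completely positive, with $\|\phi_n(ab)-\phi_n(a)\phi_n(b)\|_2\to0$) into a single map. Set $B=\prod_n M_{k_n}$, the bounded sequence C*-algebra with the supremum norm, and
\[ I=\{(x_n)_n\in B:\ \lim_{n\to\infty}\|x_n\|_2=0\}. \]
A short verification shows $I$ is a self-adjoint, two-sided, \emph{norm-closed} ideal (self-adjointness from $\|x_n^*\|_2=\|x_n\|_2$, the ideal property from $\|x_ny_n\|_2\le\|x_n\|\,\|y_n\|_2$, and norm-closedness from $\|x_n\|_2\le\|x_n-y_n\|+\|y_n\|_2$ after taking $\limsup_n$ and approximating $(x_n)$ within $B$). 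Hence $B/I$ is a genuine C*-algebra, and because each $\phi_n$ is linear, unital and self-adjoint while being asymptotically multiplicative in $\|\cdot\|_2$, the map $\Phi\colon A\to B/I$, $\Phi(a)=(\phi_n(a))_n+I$, is a \emph{unital $*$-homomorphism}.

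Next I would lift and dilate. Since $\Phi$ is u.c.p.\ and $B/I$ is a quotient C*-algebra, the lifting property of $A$ yields a u.c.p.\ lift $\Psi\colon A\to B$; writing $\Psi=(\psi_n)_n$, each $\psi_n\colon A\to M_{k_n}$ is u.c.p.\ and $\|\psi_n(a)-\phi_n(a)\|_2\to0$ for every $a\in A$. It remains to replace the $\psi_n$ by finite-dimensional representations, and this is the only place residual finite dimensionality enters, through the following lemma: \emph{if $A$ is RFD, then every unital completely positive map $\psi\colon A\to M_k$ is a point-norm limit of compressions $a\mapsto P\rho(a)P$ of finite-dimensional representations $\rho\colon A\to M_m$ $(m\ge k)$ to a $k$-dimensional subspace.} Granting the lemma and assuming $A$ separable, I fix a dense sequence $(a_i)$, apply the lemma to each $\psi_n$ with test set $\{a_1,\dots,a_n\}$ and tolerance $1/n$, and obtain finite-dimensional representations $\rho_n\colon A\to M_{m_n}$ (with $m_n\ge k_n$ and, crucially, \emph{no} constraint on $k_n/m_n$) such that $\|\psi_n(a_i)-P_{k_n}\rho_n(a_i)P_{k_n}\|<1/n$ once $n\ge i$. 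Then for each $a_i$,
\[ \|\phi_n(a_i)-P_{k_n}\rho_n(a_i)P_{k_n}\|_2\le\|\phi_n(a_i)-\psi_n(a_i)\|_2+\|\psi_n(a_i)-P_{k_n}\rho_n(a_i)P_{k_n}\|\longrightarrow0, \]
using $\|\cdot\|_2\le\|\cdot\|$; since all the maps involved are contractive this extends from the dense set to every $a\in A$, which is exactly very flexible HS-stability.

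The main obstacle is the lemma. I would prove it by dilating $\psi$ via Stinespring to $\psi(a)=V^*\pi(a)V$ for a cyclic (hence separable) representation $\pi$ and an isometry $V$, and then approximating the finitely many matrix coefficients $\langle\pi(a)Ve_j,Ve_i\rangle$, $a$ in a finite set, by those of a finite-dimensional representation. The latter is where RFD is used: applying it to the RFD algebra $M_k\otimes A$, whose finite-dimensional irreducible representations are exactly the $\mathrm{id}_{M_k}\otimes\sigma$ with $\sigma$ irreducible for $A$, one shows that the vector states of finite-dimensional representations have weak*-closed convex hull equal to the whole state space (because finite-dimensional representations norm an RFD algebra, so $\sup_\rho\lambda_{\max}(\rho(b))=\lambda_{\max}(b)$ for self-adjoint $b$, and Hahn--Banach then forbids a separating functional). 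Approximating in this way the state of $\mathrm{id}_{M_k}\otimes\pi$ determined by $V$ recovers the desired matrix coefficients from some $\mathrm{id}_{M_k}\otimes\rho$; a routine orthonormalization of the resulting nearly orthonormal vectors converts this into a genuine compression $P\rho(\cdot)P$ and completes the lemma. A secondary, purely bookkeeping point is the passage to a single sequence $(\rho_n)$ serving all $a$ simultaneously, which the diagonal argument handles under separability (the general case reduces to the separable one).
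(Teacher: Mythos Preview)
Your proof is correct and follows the same overall architecture as the paper: package the approximate representation as a $*$-homomorphism into $\prod_n M_{k_n}/I$, lift it to a u.c.p.\ sequence $(\psi_n)$ via the lifting property, dilate via Stinespring, and then use RFD to pass to compressions of genuine finite-dimensional representations. The one substantive difference is in how the RFD step is implemented. The paper quotes Hadwin's lifting characterization of RFD C*-algebras \cite{Hadwin14} as a black box (every representation of an RFD algebra is an SOT-limit of finite-dimensional representations living on subspaces containing a prescribed finite-dimensional block), and then converts SOT-closeness on basis vectors into a $\|\cdot\|_2$-estimate. You instead prove the analogous approximation directly, at the level of u.c.p.\ maps into $M_k$, by passing to $M_k\otimes A$ and using Hahn--Banach to see that vector states of finite-dimensional representations are weak*-dense in the state space; this recovers all the matrix coefficients of $\psi$ simultaneously and yields a compression after Gram--Schmidt. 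Your route is self-contained and avoids citing \cite{Hadwin14}; the paper's route is shorter once that reference is granted. Your diagonal argument at the end, producing a single sequence $(\rho_n)$ that works for all $a$, is also a little more explicit than the paper's ``it is sufficient to prove for finite $F$ and $\epsilon$'' reduction.
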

\begin{proof} Let $\phi_n: A\to M_{k_n}$ be an approximate representation. It is sufficient to prove that for any finite  $F\subset A$ and any $\epsilon >0$, there is $n$ and a finite dimensional  representation $\rho^{(n)}$ such that $\|\phi_n(a) - P_{k_n}\rho^{(n)}(a)P_{k_n}\|_2\le \epsilon$, for any $a\in F$. The approximate representation $\phi_n$  corresponds to a $\ast$-homomorphism $\phi: A \to \prod M_{k_n}/I$, where $I$ is the ideal of all sequences that go to 0 w.r.t. $\|\|_2$-norm. Since $A$ has LP, $\phi$ lifts to a u.c.p. map $(\psi_n): A \to \prod M_{k_n}$. Hence there is $n$ such that
\begin{equation}\label{1} \|\phi_n(a) - \psi_n(a)\|_2 \le \epsilon/2,\end{equation}
for any $a\in F$.
By Stinespring's theorem, for each $n$ there is a representation $\pi_n: A\to B(H)$ such that
\begin{equation}\label{2}\psi_n = P_{k_n}\pi_n P_{k_n}.\end{equation} We may assume without loss of generality that all $\pi_n$'s act on an infinite-dimensional Hilbert space $H.$  We also fix an orthonormal basis $\{ e_n:n\geq1 \}$ for $H.$

Since $A$ is RFD, by  \cite{Hadwin14} for each $n$ there are finite rank  projections $Q_i^{(n)} \ge P_{k_n}$,  $i\in \mathbb N$, and representations $\rho_i^{(n)}: A \to B(Q_i^{(n)}H)$
such that
$$\pi_n(a) = SOT-\lim_{i} \rho_i^{(n)}(a)$$
 (here SOT stands for the strong operator topology).  In particular there is $i=i(n)$ such that for $Q^{(n)} = Q_{i(n)}^{(n)}$ and $\rho^{(n)} = \rho_{i(n)}^{(n)}$ we have
$$ \|(\pi_n(a) - \rho^{(n)}(a))e_k\|\le \epsilon/2,$$
for any $k\le n,$ $a\in F$.

Since for any matrix $T\in M_{k_n}$ we have
$$\|T\|_2 =  (\text{tr}_{k_n}(T^*T))^{1/2} = \left(\frac{\sum_{j=1}^{k_n} (T^*Te_j, e_j)}{k_n}\right)^{1/2} = \left(\frac{\sum_{j=1}^{k_n} \|Te_j\|^2}{k_n}\right)^{1/2},$$
we find that for each $n$  and any $a\in F$

\begin{multline}\label{3}\|P_{k_n}\pi_n(a)P_{k_n} - P_{k_n}\rho^{(n)}(a)P_{k_n}\|_2 = \left(\frac{\sum_{j=1}^{k_n} \|(P_{k_n}\pi_n(a)P_{k_n} - P_{k_n}\rho^{(n)}(a)P_{k_n})e_j\|^2}{k_n}\right)^{1/2}\\ \le \left(\frac{\sum_{j=1}^{k_n} \|(\pi_n(a) - \rho^{(n)}(a))e_j\|^2}{k_n}\right)^{1/2}
\le \epsilon/2.\end{multline}

From (\ref{1}), (\ref{2}), (\ref{3}) it follows that there is $n$ such that
$$\|\phi_n(a) - P_{k_n}\rho^{(n)}(a)P_{k_n}\|_2 \le \epsilon,$$ for any $a\in F$.

\end{proof}

\begin{remark} The assumption of the Lifting Property in Theorem \ref{LP+RFD} can be replaced by a weaker (at least formally) assumption of the Local Lifting Property (LLP).  Indeed, by \cite[Cor.1.7]{Ioana20} the assumption of $A$ being LLP is sufficient to lift completely positive maps from $A$ to a quotient of  the product of matrix algebras which is exactly where LP was used in the proof of Theorem \ref{LP+RFD}.
\end{remark}

\medskip

\begin{corollary} \label{VeryFlexiblyStable} Let $G$ be amenable. Then $G$ is very flexibly stable if and only if $G$ is MAP.
\end{corollary}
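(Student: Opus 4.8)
The plan is to prove the two implications separately: for MAP $\Rightarrow$ very flexible HS-stability I would deduce the statement from Theorem \ref{LP+RFD} applied to $A=C^*(G)$, and for the converse I would run a direct trace argument built on hyperlinearity. Throughout I use that for amenable $G$ the full and reduced group C*-algebras agree, so $\delta_e$ is the canonical \emph{faithful} trace on $C^*(G)$.

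For MAP $\Rightarrow$ very flexible HS-stability, the task reduces to verifying the two hypotheses of Theorem \ref{LP+RFD} for $C^*(G)$. The lifting property is automatic: since $G$ is amenable, $C^*(G)$ is nuclear, and by the Choi--Effros lifting theorem every u.c.p.\ map out of a nuclear C*-algebra lifts, so $C^*(G)$ has LP. For residual finite-dimensionality I would use the characterization (\cite[Prop.~3]{Hadwin18}, recalled in the introduction) that an amenable group is MAP iff $\delta_e$ is a pointwise limit of normalized traces $\tau_i=\text{tr}\circ\pi_i$ of finite-dimensional representations $\pi_i$. Given $0\neq a\in C^*(G)$ we have $\delta_e(a^*a)>0$; approximating $a$ in norm by an element of the group ring $\C[G]$ and using that each $\tau_i$ is a trace of norm one together with the pointwise convergence $\tau_i\to\delta_e$ on $\C[G]$, one finds an index $i$ with $\tau_i(a^*a)>0$, hence $\pi_i(a)\neq 0$. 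Thus finite-dimensional representations separate the points of $C^*(G)$, so $C^*(G)$ is RFD and Theorem \ref{LP+RFD} yields very flexible HS-stability.

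For the converse, suppose $G$ is very flexibly HS-stable. Since $G$ is amenable it is hyperlinear (Proposition \ref{AmenableGroupsHyperlinear}), and the standard Følner-set construction realizing hyperlinearity produces an approximate representation $\phi_n\colon G\to\mathcal U(k_n)$ whose normalized traces realize the canonical trace, i.e.\ $\text{tr}_{k_n}(\phi_n(g))\to_\omega\delta_e(g)$ for all $g\in G$. Very flexible HS-stability then gives representations $\rho_n\colon G\to M_{m_n}$ with $\|\phi_n(g)-P_{k_n}\rho_n(g)P_{k_n}\|_2\to 0$, and the Cauchy--Schwarz estimate exactly as in the proof of Proposition \ref{FlexiblyStable} forces $\text{tr}_{k_n}(P_{k_n}\rho_n(g)P_{k_n})\to_\omega\delta_e(g)$. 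I would finish by contraposition: if $G$ were not MAP there would be some $g_0\neq e$ with $\pi(g_0)=1$ for every finite-dimensional representation $\pi$, so in particular $\rho_n(g_0)=1$ and $P_{k_n}\rho_n(g_0)P_{k_n}=P_{k_n}$ has normalized trace $1$ for every $n$; this contradicts $\text{tr}_{k_n}(P_{k_n}\rho_n(g_0)P_{k_n})\to\delta_e(g_0)=0$. Hence $G$ is MAP.

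I expect the main obstacle to be the RFD step in the first direction: passing from the hypothesis that finite-dimensional representations merely \emph{see} $\delta_e$ (a statement about $G$, or about $\C[G]$) to the conclusion that they separate all of $C^*(G)$. The crux is faithfulness of $\delta_e$ on $C^*(G)$, which holds precisely because amenability gives $C^*(G)=C^*_r(G)$, combined with the routine norm-approximation by elements of $\C[G]$ to promote pointwise convergence on the group to positivity on an arbitrary nonzero element. A secondary technical point is matching the group-level definition of very flexible HS-stability with the C*-algebraic formulation of Theorem \ref{LP+RFD}; for amenable $G$ the correspondence between (approximate and honest) unitary representations of $G$ and of $C^*(G)$ makes this identification routine.
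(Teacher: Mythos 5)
Your proposal is correct and follows essentially the same route as the paper: Choi--Effros nuclearity for LP plus RFD-ness of $C^*(G)$ fed into Theorem \ref{LP+RFD} for one direction, and the hyperlinear embedding realizing $\delta_e$ for the converse. The only divergence is that where the paper simply cites Bekka--Louvet for RFD-ness of $C^*(G)$ when $G$ is amenable and MAP, you supply a correct direct proof via faithfulness of the canonical trace (using $C^*(G)=C^*_r(G)$) and norm-approximation by $\C[G]$, which is a welcome self-contained substitute but not a different strategy.
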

\begin{proof} Since $G$ is amenable, the C*-algebra $C^*(G)$ is nuclear and hence has LP  by Choi-Effros theorem \cite{ChoiEffros}.
Suppose $G$ is MAP. By \cite{Bekka}, an amenable MAP group has an RFD C*-algebra.  By Theorem \ref{LP+RFD}, $G$ is very flexibly stable.

Now suppose $G$ is very flexibly stable. Since $G$ is amenable,  by Proposition \ref{AmenableGroupsHyperlinear}  it embeds into the tracial ultraproduct of matrices. Hence there is an approximate homomorphism $\phi_n: G\to M_n$, such that for all $g\in G\setminus \{e\}$

\begin{equation*}
\lim_{n\rightarrow\infty}\|\phi_n(g)-1_n\|_2 \neq 0.
\end{equation*}
 By very flexible stability there are finite-dimensional representations $\rho_n$ such that for any $g\in G\setminus \{e\}$
\begin{equation*}
\|P_n\rho_n(g)P_n-1_n\|_2 \nrightarrow 0
\end{equation*}
In particular, for any $g\in G\setminus \{e\}$ there is $n$ such that $\rho_n(g) \neq 1.$

\end{proof}

\bibliographystyle{plain}

%\bibliography{HS_stable}
\end{document}